\theoremstyle{thmstyleone}%
\newtheorem{theorem}{Theorem}[section]%  meant for continuous numbers
\newtheorem{proposition}[theorem]{Proposition}% 
\newtheorem{lemma}[theorem]{Lemma}
\newtheorem{condition}{}
\numberwithin{equation}{section}
\theoremstyle{thmstyletwo}%
\newtheorem{remark}{Remark}%
\theoremstyle{thmstylethree}%
\begin{document}

\title[Stable Laws and  Products of Positive Random Matrices]{Convergence to Stable Laws and a Local Limit Theorem for Products of Positive Random Matrices}

%%=============================================================%%
%% GivenName	-> \fnm{Joergen W.}
%% Particle	-> \spfx{van der} -> surname prefix
%% FamilyName	-> \sur{Ploeg}
%% Suffix	-> \sfx{IV}
%% \author*[1,2]{\fnm{Joergen W.} \spfx{van der} \sur{Ploeg} 
%%  \sfx{IV}}\email{iauthor@gmail.com}
%%=============================================================%%

\author[1]{\fnm{Jianzhang} \sur{Mei}}\email{mjz20@mails.tsinghua.edu.cn}
\author*[2]{\fnm{Quansheng} \sur{Liu}}\email{quansheng.liu@univ-ubs.fr}
\affil[1]{ \orgname{Tsinghua University}, \orgdiv{Department of Mathematical Sciences}, \orgaddress{Beijing, 100084, China}}
\affil*[2]{ \orgname{Univ Bretagne Sud}, \orgdiv{CNRS UMR 6205, LMBA}, \orgaddress{F-56000, Vannes, France}}

%\author*[1,2]{\fnm{First} \sur{Author}}\email{iauthor@gmail.com}
%
%\author[2,3]{\fnm{Second} \sur{Author}}\email{iiauthor@gmail.com}
%\equalcont{These authors contributed equally to this work.}
%
%\author[1,2]{\fnm{Third} \sur{Author}}\email{iiiauthor@gmail.com}
%\equalcont{These authors contributed equally to this work.}

%\affil*[1]{\orgdiv{Department}, \orgname{Organization}, \orgaddress{\street{Street}, \city{City}, \postcode{100190}, \state{State}, \country{Country}}}
%
%\affil[2]{\orgdiv{Department}, \orgname{Organization}, \orgaddress{\street{Street}, \city{City}, \postcode{10587}, \state{State}, \country{Country}}}
%
%\affil[3]{\orgdiv{Department}, \orgname{Organization}, \orgaddress{\street{Street}, \city{City}, \postcode{610101}, \state{State}, \country{Country}}}

%%==================================%%
%% Sample for unstructured abstract %%
%%==================================%%

%\abstract	The abstract serves both as a general introduction to the topic and as a brief, non-technical summary of the main results and their implications. Authors are advised to check the author instructions for the journal they are submitting to for word limits and if structural elements like subheadings, citations, or equations are permitted.
\abstract{We consider the products $G_n = A_n \cdots A_1$ of independent and identical distributed nonnegative  $d \times d$ matrices $(A_i)_{i \geq 1}$.  
For any starting point $x \in  \mathbb R_+^d$ with unit norm, we establish the convergence to a stable law for the norm cocycle  $\log | G_nx |$, jointly  with its direction $G_n \cdot x = G_n x / | G_n x |$. We also prove 
a local limit theorem for the couple $ (\log |G_nx|, G_n \cdot x)$, and find the  exact rate of its convergence. 
}

\keywords{Products of random matrices, stable laws, weak convergence,  rate of convergence,  local limit theorem}

%%\pacs[JEL Classification]{D8, H51}

\pacs[MSC Classification]{Primary: 60B10, 60G50, 60E07; Secondary: 60B20}

\maketitle

\section{Introduction and main results}
Let $d \geq 1$ be an integer, and $(A_{i})_{i \geq 1}$ be a sequence of independent identically distributed (i.i.d.) $d \times d$ nonnegative random matrices (whose entries are all nonnegative). Define
\begin{equation}
	G_n = A_n \cdots A_1, \quad \forall n \geq 0, 
\end{equation}
with the convention that $G_0$ stands for the identity matrix. 
For a vector $x = (x_1, \cdots, x_d)^T \in \mathbb R^d$, denote its $L^1$ norm by $|x| = \sum_{i=1}^d |x_i|$. 
Let $\mathbb{R}^d_+ = \{ (x_1, \cdots, x_d)^T : x_i \geq 0 \mbox{ for } i = 1, \cdots, d   \}$ and $\mathbb{S}_+^{d-1} = \{ x \in \mathbb{R}^{d}_+ : |x| = 1 \}$. For a  nonnegative matrix $g$, 
% denote by $\|g\|$ its operator norm with respect to the $L^1$ vector norm.
we denote  its operator norm  $\| g \|$  and the counterpart  $\iota(g)$   as
\begin{align}  
	\| g \| & = \sup \{ |gx| : x \in \mathbb{S}_+^{d-1}  \} = \max_{j=1, \cdots, d}\sum_{i=1}^d g (i,j),  \label{def-norm-iota-1} \\
	\iota(g) & = \inf\{ |gx| : x \in \mathbb{S}_+^{d-1}  \} = \min_{j=1, \cdots, d}\sum_{i=1}^d g (i,j). \label{def-norm-iota-2}
\end{align} 
For a general  matrix $g$, we still denote by $\|g \|$ its  operator norm (with respect to the $L^1$ vector norm). 
For a matrix $g$ and a vector $x \in \mathbb{R}^d$ with $|gx| \neq 0$, define the direction of $gx$ and the norm-cocycle by 
% its  $g\cdot x$ and the cocycle $\sigma(g, x)$ by
\begin{equation}
	g\cdot x =  \frac{gx }{ |gx|} \quad \mbox{ and } 
	\quad \sigma(g, x) = \log\frac{|gx|}{|x|}.
\end{equation} 

Limit theorems for products of random matrices have been 
extensively studied
since the seminal work of Furstenberg and Kesten~\cite{Furstenberg-Kesten-1960} and Furstenberg~\cite{Furstenberg-1963}. 
For comprehensive treatments, see for example 
the books by 
Bougerol and Lacroix \cite{Bougerol-Lacroix-1985}, 
Benoist and Quint  \cite{Benoist-Quint-2016-book},  the long paper by Guivarc'h and Lepage  \cite{GuiLe16}, and the many references therein. 
Laws  of large numbers for the operator norm  $ \|G_n\|$ and  the vector norm $ | G_n x|$  with a starting point $x \in \mathbb S_+^{d-1}$,  
were  first established by  Furstenberg and Kesten~\cite{Furstenberg-Kesten-1960} and Furstenberg~\cite{Furstenberg-1963}. 
Central limit theorems were later proved by Le Page~\cite{Page1982} and  Benoist and Quint \cite{Benoist-Quint-2016-article} for invertible matrices under different moment conditions, 
and by Hennion~\cite{hennion1997limit} for nonnegative matrices. 
Large deviations  and the rate of convergence in the central limit theorems 
have been the focus of recent work by many authors,   for both invertible and nonnegative matrices:  see e.g.  Buraczewski and  Mentemeier \cite{buraczewski2016precise}, 
Sert \cite{Sert-AOP-2019},  
Xiao, Grama and Liu~\cite{XiaoGramaLiu_BerryEssen22, Xiao-Grama-Liu-AOP-2023, Xiao-Grama-Liu-JTP-2025}, 
Cuny,  Dedecker,  Merlev\`ede and  Peligrad \cite{Cuny-Dedecker-Merlevede-Peligrad-AOP-2023},
and   Cuny,  Dedecker,  Merlev\`ede  \cite{Cuny-Dedecker-Merlevede-2024}.

% Recently, the Berry-Esseen bound and precise moderate deviations were shown in 
%	 Xiao, Grama and Liu~\cite{XiaoGramaLiu_BerryEssen22} for both invertible and nonnegative matrices.  
%	
%	
%	Among these work, the first, second or exponential order moments for $\log \|A_1\|$ were assumed in order to derive the desired results. We refer to~\cite{XiaoGramaLiu_BerryEssen22} for detailed historical review, related works and their applications. 

In this paper, we study the convergence to stable laws for products of positive random matrices. 
For the one-dimensional case ($d = 1$),  this is a classical topic;  see e.g. 
the books by 
Ibragimov and Linnik~\cite{Ibragimov-Linnik-1971} and  Petrov~\cite{Sum-Petrov}. 
For the multidimensional case ($d > 1$), this problem has been  considered by Hennion and Herv\'e~\cite{hennion2008stable}, who proved  the weak convergence to a stable law of the norm cocycle $ \log | G_n x|  $ with suitable norming.  
%
%In this article, to complement their work, we show the joint weak convergence of pair $(S_n^x, X_n^x)$ and the related local limit theorem. Moreover, we also extend the result for convergence rate of de Haan and Peng~\cite{de1999exact} to the multidimensional case, by giving the first order Edgeworth expansion of $S_n^x$. 
%		
%
% instead of the convergence to the normal law. 
Here 
we go further by establishing 
the convergence to a stable  law of   $ \log | G_n x|$ jointly with its direction $ G_n\cdot x$, by investigating 
the weak convergence, the local limit theorem and the rate of convergence in law of the Markov chain
\begin{equation} \label{def-Snx-Xnx} 
	(S_n^x, X_n^x) 
	:= (\sigma(G_n, x), G_n\cdot x)
	=     \Big(\log | G_n x|,  \frac{G_n x}{|G_n  x|}\Big), 
	% = (\sum_{k=1}^n  \sigma(A_k, G_{k-1} \cdot x), G_n \cdot x) 
\end{equation}
as $n \to \infty$, for any fixed $x \in\mathbb{S}_+^{d-1}$.

Our results extend the classical ones for random walks on the real line to the case of (non-commutative) random walks on the semigroup of nonnegative matrices.
In addition to their theoretical significance, we believe that these results provide valuable tools for applications in various research domains, such as branching random walks driven by products of random matrices and multitype branching processes in random environments.

%	Besides the moment conditions mentioned above, it is also interesting to impose the condition that $\log \|A_1\|$ is in the domain of attraction of some $\alpha$-stable law ($0 < \alpha 
%	\leq 2$). 

\subsection{Convergence to stable laws and a local limit theorem}
The statements and proofs of our results are closely related to the context of Hennion and Herv\'e~\cite{hennion2008stable}. We first recall the conditions used there. For a matrix $g$, we write $g(i,j)$ for its $(i,j)$-th entry, and $g>0$ to mean that all its entries are strictly positive. 

\begin{condition}\label{cond::allowability_and_positivity}
	(Allowability and Positivity) Almost surely (a.s.), every column and row of $A_1$ contains at least one strictly positive entry, and  
	%		that with positive probability, 
	%		theres is $n \geq 1$ such that $G_n >0$. 
	$\mathbb P [ \exists n \geq 1, G_n >0] >0.$
\end{condition}

%	For a nonnegative matrix $g$, we write $\|g\|_{1, 1} = \sum_{i, j = 1}^d g(i, j)$ for its entry-wise $L^1$-matrix norm. 

A measurable function $L: \mathbb R_+ \to \mathbb R_+$ is called slowly varying if 
$L(t) >0$ for $t>0$ large enough and 
$\lim_{t \to +\infty} \frac{L(at)}{L(t)} = 1$ for any $a > 0$. 
By the notation \eqref{def-norm-iota-1} and \eqref{def-norm-iota-2}, 
$$  \|A_1\| = \max_{j=1, \cdots, d}\sum_{i=1}^d A_1 (i,j),  \quad
\iota (A_1) =  \min_{j=1, \cdots, d}\sum_{i=1}^d A_1 (i,j). $$
%	Denote 
%	\begin{align} 
	%	 N_1 = \|A_1\|, \quad  V_1 = \iota(A_1). 
	%	 \end{align} 

\begin{condition}\label{cond::hennion}
	There exist $\alpha \in (0, 2]$, a slowly varying function $L$ which goes to $+\infty$  if $\alpha = 2$,  and two constants $c_+ \geq 0, c_- \geq 0$ with $c_+ + c_- > 0$, such that as $t \to +\infty$,
	\begin{equation}\label{frac-t-alpha}
		\frac{t^{\alpha} \mathbb{P}[\log \|A_1\| > t]}{L(t)} = c_+ + o(1), \quad  \frac{t^{\alpha} \mathbb{P}[\log \| A_1 \| \leq -t]}{L(t)} = c_- + o(1), \quad  \frac{t^{\alpha} \mathbb{P}[\log \iota (A_1) \leq -t]}{L(t)} = O(1).
	\end{equation}
\end{condition}
Since all matrix norms are equivalent, 
the condition remains equivalent when the operator norm $\| A_1	\|$ is replaced by any matrix norm. In \cite{hennion2008stable}, the  entry-wise $L^1$-matrix norm
$\|A_1\|_{1, 1} = \sum_{i, j = 1}^d A_1(i, j)$ is used.

Denote the law of $A_1$ by $\mu$. Define the transfer operator $P$ by
\begin{equation} \label{def-P}
	Pf(x) = \mathbb{E}[f(A_1 \cdot x)] = \int f(g\cdot x)d\mu(g), \quad x \in \mathbb{S}^{d-1}_+,  
\end{equation}
for any bounded measurable function $f : \mathbb{S}^{d-1}_+ \to \mathbb{C}$.
From~\cite[Theorem 2.1]{hennion2008stable}, there exists a unique \mbox{$\mu$-stationary} probability measure $\nu$ on $\mathbb{S}^{d-1}_+$ in the sense  that for any measurable bounded $f : \mathbb{S}^{d-1}_+ \to \mathbb{C}$, 
\begin{equation}\label{nu-f-int-S-d-1-f-y}
	\nu(f) := \int_{\mathbb{S}^{d-1}_+}f(y) d\nu(y) = \int_{\mathbb{S}^{d-1}_+}Pf(y) d\nu(y) =: \mu * \nu (f). 
\end{equation} 
Let $\Gamma_\mu = [\mathrm{supp}(\mu)]$ be the closed multiplicative semigroup generated by the support of $\mu$, and $\Lambda(\Gamma_\mu)$ be the closure of $\{ v_a : a \in \Gamma_\mu, a >0 \}$, where $v_a \in \mathbb{S}_+^{d-1}$ is the Perron-Frobenius right eigenvector of $a$ with unit norm. We say that $\mu$ is non-arithmetic if for any $t > 0$, $\theta \in [0, 2\pi)$ and any function $\vartheta : \mathbb{S}^{d-1}_+ \to \mathbb{R}$, there exist $g \in \Gamma_\mu, x \in \Lambda(\Gamma_\mu)$, such that
\begin{equation}
	\exp \big( it\log|gx| -i\theta + i(\vartheta(g\cdot x) -\vartheta(x)) \big) \neq 1.
\end{equation}

\begin{theorem}[Convergence to stable laws and local limit theorem]\label{thm::local_limit_theorem}
	Assume Conditions \ref{cond::allowability_and_positivity} and \ref{cond::hennion}. Then, there exist  two sequences of real numbers $(a_n), (b_n)$, with  $a_n \geq 0$ and $\lim_{n \to \infty}a_n = \infty$, and an $\alpha$-stable law $s_\alpha$, 
	%		and a   $\mathbb R \times \mathbb{S}^{d-1}_+$-valued random variable   $(S, X)$ defined on some probability space, 
	such that for any $x \in \mathbb{S}^{d-1}_+$, as $n \to \infty$,
	%		the sequence of pairs $(\frac{S_n^x}{a_n}-b_n, X_n^x)$ converges in distribution to  $(S, X)$ as $n \to \infty$, where $S$ is $\alpha$-stable, $X$ has law $\nu$, $S$ and $X$ are independent. 
	\begin{align} 
		\Big(\frac{S_n^x}{a_n}-b_n, X_n^x\Big) \to {s}_\alpha \otimes \nu 
		\quad  \mbox{ in law, }
	\end{align} 
	%		where . 
	Moreover, if additionally $\alpha \neq 2$ and $\mu$ is non-arithmetic, then for any continuous function $f : \mathbb{S}_+^{d-1} \to \mathbb{R}$ and any directly Riemann integrable function $k : \mathbb{R} \to \mathbb{R}$, 
	\begin{equation}\label{lim-n-sup-x-y-S-d-1}
		\lim_{n\to\infty}\sup_{(x, y) \in \mathbb{S}_+^{d-1}\times \mathbb{R}} \bigg|a_n \mathbb{E}[f(X_n^x)k(y + S_n^x - a_nb_n)] - \nu(f) \int_\mathbb{R} k(z)p_\alpha\big(\frac{z-y}{a_n}\big)dz\bigg| = 0,
	\end{equation}
	where $p_\alpha$ is the probability density function of $s_\alpha$. 
\end{theorem}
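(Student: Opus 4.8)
The plan is to run the spectral (Nagaev--Guivarc'h) method for the family of Fourier--transfer operators $P_{it}$, $t\in\mathbb R$, defined by $P_{it}f(x)=\int|gx|^{it}f(g\cdot x)\,d\mu(g)$ on a suitable Banach space $\mathcal B$ of regular functions on $\mathbb S_+^{d-1}$ (the one used in \cite{hennion2008stable}), so that $\mathbb E\big[e^{itS_n^x}f(X_n^x)\big]=P_{it}^nf(x)$. I will use the following facts, supplied by a spectral analysis of the $P_{it}$ extending that of Hennion--Herv\'e \cite{hennion2008stable}: (i) under Condition~\ref{cond::allowability_and_positivity}, $P_0=P$ is quasi-compact on $\mathcal B$ with a simple dominant eigenvalue $1$, constant eigenfunction, and eigenprojection $\Pi_0f=\nu(f)\mathbf{1}$; (ii) there is $\delta>0$ such that for $|t|<\delta$ one has $P_{it}^n=\lambda(t)^n\Pi_t+N_t^n$ with $\lambda(0)=1$, $|\lambda(t)|<1$ for $0<|t|<\delta$, $\lambda(t)\to1$ and $\Pi_t\to\Pi_0$ in $\mathcal L(\mathcal B)$ as $t\to0$, and $\sup_{|t|<\delta}\|N_t^n\|_{\mathcal B}\le C\rho^n$ for some $\rho<1$; (iii) Conditions~\ref{cond::allowability_and_positivity}--\ref{cond::hennion} yield the expansion $\log\lambda(t)=i\kappa t-|t|^\alpha L(1/|t|)(\chi+o(1))$ as $t\to0$ with $\mathrm{Re}\,\chi>0$, from which one builds norming sequences $a_n\geq0$, $a_n\to\infty$, and $b_n$, and a stable characteristic function $\phi_\alpha=\widehat s_\alpha$ with $e^{-isb_n}\lambda(s/a_n)^n\to\phi_\alpha(s)$ pointwise, together with Potter-type bounds giving, for each small $\eta>0$ and each $\varepsilon\in(0,\alpha)$, a domination $|\lambda(s/a_n)|^n\lesssim e^{-c|s|^{\alpha-\varepsilon}}$ for $|s|\le\eta a_n$; and (iv), when $\alpha\neq2$ and $\mu$ is non-arithmetic, the spectral radius of $P_{it}$ on $\mathcal B$ is $<1$ for every $t\neq0$, hence, by quasi-compactness, upper semicontinuity of the spectral radius and compactness, $\sup_{\eta\le|t|\le T}\|P_{it}^n\|_{\mathcal B}\le C_{\eta,T}\,\rho_{\eta,T}^{\,n}$ with $\rho_{\eta,T}<1$ for all $0<\eta\le T$.

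For the weak convergence, fix $t\in\mathbb R$ and $f\in\mathcal B$. For $n$ large $|t/a_n|<\delta$, so $\mathbb E\big[e^{it(S_n^x/a_n-b_n)}f(X_n^x)\big]=e^{-itb_n}\lambda(t/a_n)^n(\Pi_{t/a_n}f)(x)+e^{-itb_n}(N_{t/a_n}^nf)(x)$; the remainder is $O(\rho^n)$ uniformly in $x$ since $\|\cdot\|_\infty\le\|\cdot\|_{\mathcal B}$, while $e^{-itb_n}\lambda(t/a_n)^n\to\widehat s_\alpha(t)$ and $(\Pi_{t/a_n}f)(x)\to\nu(f)$ uniformly in $x$ (because $\Pi_t\to\Pi_0$ in operator norm and $\Pi_0f=\nu(f)\mathbf{1}$). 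Hence $\mathbb E\big[e^{it(S_n^x/a_n-b_n)}f(X_n^x)\big]\to\widehat s_\alpha(t)\,\nu(f)$; extending from $f\in\mathcal B$ to $f\in C(\mathbb S_+^{d-1})$ by uniform approximation (using the bound $\le\|f\|_\infty$), and noting that the functions $(y,w)\mapsto e^{ity}f(w)$ are convergence-determining on $\mathbb R\times\mathbb S_+^{d-1}$ while tightness in the first coordinate follows from continuity of $\widehat s_\alpha$ at $0$, we conclude $(S_n^x/a_n-b_n,X_n^x)\to s_\alpha\otimes\nu$ in law for every $x$.

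For the local limit theorem, assume $\alpha\neq2$ and $\mu$ non-arithmetic. By a routine approximation (Beurling--Selberg one-sided approximants of interval indicators with compactly supported Fourier transform, followed by the Darboux-sum sandwiching defining direct Riemann integrability, then $f=f_+-f_-$, and finally uniform approximation of $f\in C(\mathbb S_+^{d-1})$ by elements of $\mathcal B$ together with the uniform bound $\sup_{n,x,y}a_n\mathbb E[\,|k|(y+S_n^x-a_nb_n)]<\infty$ obtained from the case $(|k|,\mathbf{1})$), it suffices to prove \eqref{lim-n-sup-x-y-S-d-1} when $f\in\mathcal B$ and $k$ is continuous and integrable with $\widehat k$ supported in some $[-T,T]$. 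For such $k$, Fourier inversion gives $a_n\mathbb E[f(X_n^x)k(y+S_n^x-a_nb_n)]=\tfrac1{2\pi}\int_{|s|\le Ta_n}e^{isy/a_n}e^{-isb_n}\widehat k(s/a_n)\,P_{is/a_n}^nf(x)\,ds$ and, by the analogous computation for $p_\alpha=\tfrac1{2\pi}\int e^{-is\,\cdot}\phi_\alpha(s)\,ds$, $\nu(f)\int_\mathbb R k(z)p_\alpha\!\big(\tfrac{z-y}{a_n}\big)dz=\tfrac1{2\pi}\int_{|s|\le Ta_n}e^{isy/a_n}\nu(f)\widehat k(s/a_n)\phi_\alpha(s)\,ds$. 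Since $|e^{isy/a_n}|=1$, the supremum over $(x,y)$ of the difference is at most $\tfrac{\|\widehat k\|_\infty}{2\pi}\sup_x\int_{|s|\le Ta_n}\big|e^{-isb_n}P_{is/a_n}^nf(x)-\nu(f)\phi_\alpha(s)\big|\,ds$, which I split at $|s|=\eta a_n$ for a fixed small $\eta\le\delta$. On $\eta a_n<|s|\le Ta_n$, $s/a_n$ lies in the compact set $\{\eta\le|t|\le T\}$, so by (iv) the $P$-contribution is $\le 2Ta_n\,C_{\eta,T}\rho_{\eta,T}^{\,n}\|f\|_{\mathcal B}\to0$ and the $\phi_\alpha$-contribution is $\le|\nu(f)|\int_{|s|\ge\eta a_n}|\phi_\alpha(s)|\,ds\to0$. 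On $|s|\le\eta a_n$, write $P_{is/a_n}^nf=\lambda(s/a_n)^n\Pi_{s/a_n}f+N_{s/a_n}^nf$: the $N$-term contributes $\le 2\eta a_n C\rho^n\|f\|_{\mathcal B}\to0$, while the principal term $\big|e^{-isb_n}\lambda(s/a_n)^n(\Pi_{s/a_n}f)(x)-\nu(f)\phi_\alpha(s)\big|$ tends to $0$ pointwise in $s$ and uniformly in $x$ (by (ii)--(iii)) and is dominated on $\{|s|\le\eta a_n\}$ by $C\|f\|_{\mathcal B}e^{-c|s|^{\alpha-\varepsilon}}+|\nu(f)|\,|\phi_\alpha(s)|\in L^1(ds)$; dominated convergence closes this region, and \eqref{lim-n-sup-x-y-S-d-1} follows.

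The main obstacle is input (iv): showing that the non-arithmeticity hypothesis stated before the theorem forces the spectral radius of $P_{it}$ to be $<1$ for \emph{every} $t\neq0$ (for small $t$ this is automatic from the perturbation in (ii)--(iii), so the content lies in the large-$|t|$ regime), and the closely related expansion of $\lambda(t)$ near the origin under the regularly varying tail Condition~\ref{cond::hennion} --- this is where Condition~\ref{cond::allowability_and_positivity} enters, through the Perron--Frobenius/Hilbert-metric contraction that makes $\log|G_nx|$ comparable across starting points $x$, and where the third estimate of Condition~\ref{cond::hennion} on $\iota(A_1)$ enters, to bound $\log|G_nx|$ from below. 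Once these are in hand, the Fourier-inversion scheme above delivers both the joint weak limit and the uniform local limit theorem, with the factor $a_n$ and the rescaled profile $p_\alpha\!\big(\tfrac{z-y}{a_n}\big)$ encoding the exact rate of convergence.
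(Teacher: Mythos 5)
Your skeleton is essentially the paper's own proof: the Nagaev--Guivarc'h spectral method for the Fourier transfer operators $P_t$ on the Hennion--Herv\'e Lipschitz space, the decomposition $P_t^n=\lambda(t)^n\Pi_t+R_t^n$, Fourier inversion for $k$ with compactly supported $\hat k$ split into the regions $|s|\le\eta a_n$ and $\eta a_n<|s|\le Ta_n$, dominated convergence on the first region, exponential decay on the second, and a final approximation step to pass to continuous $f$ and directly Riemann integrable $k$. (Minor stylistic differences: the paper compares $\lambda(t/a_n)^n$ with $\phi_Z(t/a_n)^n$, $Z=\log|A_1X_0|$, and invokes the classical one--dimensional stable theory instead of expanding $\log\lambda$ directly, and it proves the joint weak convergence by testing against $e^{i\langle v,\cdot\rangle}$ and L\'evy continuity rather than by a convergence-determining/tightness argument; both routes are fine.)

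The genuine gap is that your inputs (iii)--(iv) are precisely the non-trivial technical content, they are not available off the shelf from \cite{hennion2008stable}, and you explicitly leave (iv) as an unresolved ``main obstacle.'' Input (iv) --- spectral radius of $P_t$ strictly less than one for every $t\neq0$ under non-arithmeticity, whence the uniform geometric bound on compacts of $\mathbb R\setminus\{0\}$ --- is the paper's Lemma~\ref{lem::exponential-decay}; it is proved there by establishing quasi-compactness of $P_t$ through a Doeblin--Fortet inequality $m(P_t^nf)\le C(1+|t|)\|f\|_\infty+Cr^nm(f)$ (using the cocycle regularity $|\sigma(g,x)-\sigma(g,y)|\le 2|\log(1-\mathbf d(x,y))|$ and the contraction of $\mathbf d$ under Condition~\ref{cond::allowability_and_positivity}), then the Ionescu-Tulcea--Marinescu theorem, and finally the non-arithmeticity assumption to exclude peripheral eigenvalues of modulus one, following \cite{XiaoGramaLiu_BerryEssen22}; without this, your estimate on $\eta a_n<|s|\le Ta_n$ has no support. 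Likewise the Potter-type domination $|\lambda(s/a_n)|^n\lesssim e^{-c|s|^{\alpha-\varepsilon}}$ on $|s|\le\eta a_n$ and the convergence $e^{-isb_n}\lambda(s/a_n)^n\to h_\alpha(s)$, which you assert inside (iii), are the content of the paper's Lemma~\ref{lem::decay_1}: they are derived from $\lambda(t)=\phi_Z(t)+O(\|P_t-P\|_{\mathcal L}^2)$ with $2\beta>\alpha$, the Ibragimov--Linnik representation of $\log\phi_Z$ combined with the tail asymptotics \eqref{1-F-u-L-u-c-+}, and Karamata's representation of the slowly varying $L$ (and, for the weak-convergence part at $\alpha=2$, the requirement in Condition~\ref{cond::hennion} that $L\to\infty$). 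Supplying proofs of these two inputs would close the argument and make it essentially identical to the paper's.
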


The weak convergence of the renormalized cocycle $S_n^x/a_n - b_n$ to a stable law was proven in Hennion and Herv\'e~\cite[Theorem 1.1 and Lemma 2.1]{hennion2008stable}. %, without assuming non-arithmeticity. 
Theorem~\ref{thm::local_limit_theorem} improves their result by establishing the convergence of the joint law %  of the cocycle $S_n^x$ and
with  the direction $X_n^x$,  and providing a local limit theorem. 
%  for this convergence. 
For $\alpha = 2$, the local limit theorem  was shown in Bui, Grama and Liu~\cite{bui2020asymptotic} under some exponential moment condition.  For $\alpha <2$, it is new even for % renormalized cocycle $S_n^x/a_n - b_n$
the marginal law of  $S_n^x$.

\subsection{Exact rate of convergence}
To derive the exact rate of convergence, we need stronger conditions as follows. 
\begin{condition}[Furstenberg-Kesten condition]\label{cond::Furstenberg-Kesten}
	There exists a constant $K > 1$ such that
	\begin{equation}\label{0-leq-max}
		0 < \max_{1 \leq i,j\leq d} g(i, j) \leq K \min_{1\leq i,j \leq d} g(i, j), \quad \forall g \in \mathrm{supp}(\mu),
	\end{equation}
	where we recall that {$g(i, j)$} is the $(i,j)$-th entry of $g$ and $\mathrm{supp}(\mu)$ is the support of $\mu$. 
\end{condition}
We notice that Condition~\ref{cond::Furstenberg-Kesten} implies~\ref{cond::allowability_and_positivity}.

In the following, $X_0$ denotes a $ \mathbb S_+^{d-1}$-valued random variable whose distribution is the invariant measure $\nu$,  which is independent of $A_1$.  	
%Let $Z = \log | A_1 X_0 |$ with $X_0$ independent of $A_1$, whose distribution is the invariant measure $\nu$, 
%	be the real random variable such that $\mathbb{E}[f(Z)] = \int f(\sigma(g, x)) d\mu(g) d\nu(x)$ for any bounded measurable $f:\mathbb{R} \to \mathbb{R}$, 
% and  $F(x) = \mathbb{P}[Z \leq x]$, $x \in \mathbb{R}$, be its distribution function. 
We will use the following  condition 
on the distribution of  $Z = \log | A_1 X_0 |$ 
about non-lattice and second order regular variation, introduced in de Haan and Peng~\cite{de1999exact}. 
\begin{condition}\label{cond::stable_1}
	The law of $Z = \log | A_1 X_0 |$ is non-lattice, whose distribution function $F(x) = \mathbb{P}[Z \leq x]$, $x \in \mathbb{R}$, satisfies the following properties:  there exist $\alpha \in (0, 2)$, $p\in[0, 1]$, $q \in \mathbb{R}$, $\rho \in (\alpha - 2, \alpha - 1) \cap (-\infty, 0]$ and a measurable function $A: \mathbb R_+ \to \mathbb{R}$ with $\lim_{t \to +\infty}A(t) = 0$,  % {\color{blue} and not changing sign near $+\infty$}, 
	which does not change sign for $t>0$ large enough, 
	such that
	\begin{align}\label{lim-t-frac-c-1-F-tx}
		& \lim_{t \to +\infty}\frac{\frac{1-F(tx)+F(-tx)}{1-F(t)+F(-t)} - x^{-\alpha}}{A(t)} = x^{-\alpha} \frac{x^\rho - 1}{\rho}, \; \forall x > 0, \quad \mbox{and } \; \lim_{t \to +\infty} \frac{\frac{1-F(t)}{1-F(t) + F(-t)} - p}{A(t)} = q,
	\end{align}
	with the convention that $\frac{x^0-1}{0}= \log x$ if $\rho = 0$. Moreover, $\mathbb{E}[Z] = 0$
	if  $\alpha \in (1, 2)$. 
\end{condition}
%	Condition~\ref{cond::stable_1} implies that
%	\begin{align}\label{lim-t-to-infty-1-F-tx}
	%		\lim_{t \to +\infty}\frac{1-F(tx)+F(-tx)}{1-F(t)+F(t)} = x^{-\alpha}, \quad \forall x > 0, \quad \mbox{ and } \quad
	%		\lim_{t \to +\infty} \frac{1-F(t)}{1-F(t) + F(t)} = p,
	%	\end{align}
Condition~\ref{cond::stable_1} implies that $F$ is in the domain of attraction of a stable law with index $\alpha \in (0, 2)$. By de Haan and Ferreira~\cite[Theorem B.2.1 and Remark B.3.15]{haan06extreme}, we know that $|A|$ is regularly varying with index $\rho$, that is, $\lim_{t \to +\infty} \big|\frac{A(tx)}{A(t)}\big| = x^\rho$ for any $x > 0$. Since $A$ does not change sign near $+\infty$, it holds that $\lim_{t \to +\infty} \frac{A(tx)}{A(t)} = x^\rho$ for any $x > 0$. From~\cite[Proposition 1]{de1999exact}, if $\rho < 0$, the limit 
\begin{equation}\label{c}
	c:=\lim_{t\to+\infty}t^\alpha(1-F(t)+F(-t)) > 0
\end{equation}
exists;  if $\rho = 0$, we set $c = 1$.

Conditions~\ref{cond::Furstenberg-Kesten} and \ref{cond::stable_1} imply \ref{cond::hennion}. Indeed, we will see from ~\eqref{sigma-A-1-x} of Lemma \ref{lem::bound} that there exists $C > 0$ such that the distribution functions $F_1$  and $F_2$ of $\log \|A_1\|$ and $\log \iota (A_1)$  satisfy  $F_i(t - C) \leq F(t) \leq F_i(t + C)$ for all $t \in \mathbb{R}$, $i = 1, 2$. This implies that $F_1$ and $F_2$ lie in the same domain of attraction as $F$ does.

\begin{remark} 
	Under  Condition~\ref{cond::Furstenberg-Kesten} and in the case $\rho \in (-1, 0)$,
	% Condition~\ref{cond::stable_1}  
	\eqref{lim-t-frac-c-1-F-tx} holds if and only if it holds
	% remains equivalent 
	when $F$  % in~\eqref{lim-t-frac-c-1-F-tx} with 
	is replaced by the distribution function of $\log \|A_1\|$: see Lemma~\ref{lem::equivalence}.
\end{remark}

%Based on these conditions, 
Assuming Condition~\ref{cond::stable_1},
we recall some notation in~\cite{de1999exact}. Let $U$ be the generalized inverse of the function $t \in (0, +\infty) \mapsto 1/(1-F(t)+F(-t))$. Define:
\begin{align}
	a_n & = \begin{cases}\label{a_n}
		n^{1/\alpha}, \quad & \rho < 0,  \\
		U(n), \quad & \rho = 0,
	\end{cases}  \\
	%		a_n = n^{1/\alpha} \quad \mbox{ if } \rho < 0, \quad \mbox{ and } \quad a_n = U(n) \quad \mbox{ if } \rho = 0.
	%	\end{equation}
%	\begin{align}
	b_n  & = \begin{cases}\label{b_n}
		\int_{0}^1(n(1-F(a_nx)-F(-a_nx))-c(2p-1)x^{-\alpha})dx, \quad &0 < \alpha < 1, \\
		\int_{0}^\infty n(1-F(a_nx)-F(-a_nx))\cos x \; dx, \quad &\alpha = 1, \\
		0, \quad &1 < \alpha < 2,
	\end{cases} \\
	h_\alpha(t) & = \begin{cases}
		\exp\big( -|t|^\alpha c \Gamma(1-\alpha) \big( \cos \frac{\pi\alpha}{2} - i\,\mathrm{sgn}(t)(2p-1)\sin\frac{\pi\alpha}{2}  \big) \big), \quad & \alpha \neq 1, \\
		\exp\big( -|t| c \big( \frac{\pi}{2} - i\mathrm{sgn}(t)(2p-1)\log|t| \big) \big), \quad & \alpha = 1,
	\end{cases} \quad t \in \mathbb{R},
	\label{def-h-alpha}
\end{align}
where $\mathrm{sgn}(t) = 1$ if $t \geq 0$ and $0$ if $t < 0$.  {\color{black}It is known that  if 
	$(Z_i)_{i \geq 1}$ are i.i.d. copies of $Z$, then 
	as $n \to \infty$, the sequence $(\frac{\sum_{i=1}^n Z_i}{a_n}-b_n)_{n \geq 1}$ converges in law to an $\alpha$-stable law with characteristic function $h_\alpha$ (see~\cite[Propositions 1 and 2]{de1999exact}).}

Introduce the constants  (see~\cite[3.761]{table-07}) %for any $a  \in \mathbb R \setminus \{1, 0, -1, -2, \cdots \}$,  
\begin{align}
	& d_{a} = \int_0^\infty x^{-a} \sin x dx =  \Gamma(1-a)\sin\frac{\pi(1-a)}{2}, \quad \forall a \in (0, 2)\label{d-alpha} \\
	& z_a = \Gamma(1-a) \sin\frac{\pi(1-a)}{2}\Big( \frac{\Gamma'(1-a)}{\Gamma(1-a)} + \frac{\pi}{2}\cot\frac{\pi(1-a)}{2} \Big), \quad \forall a \in (0, 2), \notag \\
	& c_a = \frac{z_{a-1}}{a - 1} + \frac{d_{a-1}}{(a-1)^2}, \quad \forall a \in (1, 2).
\end{align}
In the following, we use the convention that $0^a \log 0 =0$ for $a>0$. 
Define, for $t \geq 0$, 
\begin{align}
	& A_\rho(t) =\begin{cases}\label{A-rho}
		\frac{c}{\rho} d_{\alpha - \rho} t^{\alpha - \rho}, \quad & \rho < 0, \\
		t^\alpha(z_\alpha - d_\alpha \log t) , \quad & \rho = 0,
	\end{cases} 
\end{align} 
%\begin{align}
%	& B_\rho(t) \notag \\
%	& =\begin{cases}\label{B-rho}
%		\mathrm{sgn}(t)\big( \frac{2p-1}{\rho} + 2q \big) \frac{cd_{\alpha - \rho - 1}}{\alpha - \rho - 1} |t|^{\alpha - \rho},  & 1 < \alpha < 2, \; \alpha - 2 < \rho < 0, \\
%		\mathrm{sgn}(t) |t|^\alpha \big( (2p-1) \big( c_\alpha - \frac{d_{\alpha - 1}}{\alpha - 1}\log|t| \big) + \frac{2qd_{\alpha - 1}}{\alpha - 1} \big), & 1 < \alpha < 2,\; \rho = 0, \\
%		\mathrm{sgn}(t) \big( \frac{2p-1}{\rho} + 2q \big)  \frac{cd_{-\rho}}{-\rho} (|t|^{1-\rho} - |t|),  & \alpha = 1, \; -1 < \rho < 0, \\
%		\mathrm{sgn}(t) \big( \frac{2p-1}{\rho} + 2q\big) \frac{c}{\alpha - \rho - 1}(d_{\alpha - \rho - 1} |t|^{\alpha - \rho} - |t| )  , & 0 < \alpha < 1, \; \alpha - 2 < \rho < \alpha - 1,
%	\end{cases}
%\end{align}
\begin{align}
	& B_\rho(t) = \begin{cases}
		\big( \frac{2p-1}{\rho} + 2q \big) \frac{cd_{\alpha - \rho - 1}}{\alpha - \rho - 1} t^{\alpha - \rho},  & 1 < \alpha < 2, \; \alpha - 2 < \rho < 0, \\
	t^\alpha \big( (2p-1) \big( c_\alpha - \frac{d_{\alpha - 1}}{\alpha - 1}\log t \big) + \frac{2qd_{\alpha - 1}}{\alpha - 1} \big), & 1 < \alpha < 2,\; \rho = 0, \\
		 \big( \frac{2p-1}{\rho} + 2q \big)  \frac{cd_{-\rho}}{-\rho} (t^{1-\rho} - t),  & \alpha = 1, \; -1 < \rho < 0, \\
		 \big( \frac{2p-1}{\rho} + 2q\big) \frac{c}{\alpha - \rho - 1}(d_{\alpha - \rho - 1}  t^{\alpha - \rho} - t)  , & 0 < \alpha < 1, \; \alpha - 2 < \rho < \alpha - 1.
	\end{cases}
	 \label{B-rho}
\end{align}
For $t<0$, we define  $A_\rho(t) = A_\rho(-t)$, 
$ B_\rho(t) = - B_\rho(-t)$. Then $\forall t \in \mathbb R$,  $A_\rho(t) = A_\rho(|t|)$, 
$B_\rho(t) = \mathrm{sgn}(t) {B}_\rho(|t|)$.

% As usual we write $\mathbb P ( \cdot | F) $ 
%	We write $\mathcal{L}(X | F)$ 
%	for the conditional probability  given event $F$, and let $\mathcal{P} : g \mapsto {g} /{\|g\|}$ be the projection onto the space of matrices with operator norm one. 

\medskip 
The following condition depicts the tail behavior of $A_1$. 
\begin{condition}\label{cond::when_modulus_large}
	There exists a measure $\tilde{\mu}$ on the space of nonnegative matrices 
	%with operator norm one, 
	such that,   
	as $n \to \infty$,	
	%	$ \frac{A_1}{\|A_1\|} \to \tilde \mu$ in law, 
	%	%{\color{red} conditional on  $| \log \|A_1\| |> n$. }
	%	 both   conditional on  $ \log \|A_1\| > n$,  and  on  $ \log \|A_1\| \leq -n$. %		
	the conditional laws 	$\mathbb {P} \big( \frac{A_1}{\|A_1\|}  \in \cdot \big|  \log \|A_1\| > n\big)$ and 
	$\mathbb {P} \big( \frac{A_1}{\|A_1\|}  \in \cdot \big| \log \|A_1\| \leq -n)$ converge (weakly)   to $\tilde{\mu}$. %  as $n \to \infty$.
\end{condition}
Let $Q$ be the operator defined as follows: for any bounded measurable $f : \mathbb{S}_+^{d-1} \to \mathbb{C}$ and $x \in \mathbb{S}_+^{d-1}$,
\begin{equation}\label{def-Q}
	Qf(x) = \int f(g\cdot x)d\tilde{\mu}(g).
\end{equation}
Define
\begin{equation}
	\Delta  := \lim_{n\to\infty}\sum_{i=0}^{n-1}P^{n-1-i}(Q-P)P^{i}; 
\end{equation}
%	{\color{red}  
	in Lemma~\ref{lem::operator-U} we will see that 
	the limit exists in the space $\mathcal B (\mathcal L) $  of bounded linear 
	operators (equipped with the operator norm) on some Banach space $\mathcal L$,     such that 
	$  \Delta f =  \delta (f)  \mathbf 1$, where  $\mathbf 1$  denotes the constant  function on  
	$\mathbb{S}_+^{d-1}$ with value $1$, and $\delta $ is a bounded linear mapping from $\mathcal L$ to $\mathbb{C}$.   
	A series  representation of $\Delta$ and $\delta$ will also be given in that lemma. 
	%	 }

Let $H_\alpha$ be the distribution function whose characteristic function is $h_\alpha$ defined in \eqref{def-h-alpha}. 	
Let $J(t)=A_\rho(t)+iB_\rho(t)$ if $\rho > -\alpha$, and $J(t) = \frac{(\log h_\alpha(t))^2}{2}$ if $\rho < -\alpha$. 
%	 {\color{blue}
	Define for $ s \in \mathbb{R}$, 
	\begin{equation} \label{def-MsNs} 
		M(s) = \frac{1}{2\pi} \int_\mathbb{R} \frac{e^{-its}}{it}J(t)h_\alpha(t)dt,
		%  \quad s \in \mathbb{R},
		%		\end{equation}
	%	%	and if $\alpha \in (0, 1)$, 
	%		% with $C(t, \alpha) = \frac{\log h_\alpha(t)}{|t|^\alpha} = -cd_\alpha + i\,\mathrm{sgn}(t)c(2p-1)\alpha d_{\alpha + 1}$,
	%		\begin{equation}
		\quad
		N(s)=\frac{1}{2\pi}\int_\mathbb{R}\frac{e^{-its}}{-it}(\log h_\alpha(t)) h_\alpha(t)dt.  % \quad s \in \mathbb{R}.
	\end{equation}
	%	}

%	\begin{proposition}[{\cite[Theorem 1, 2, 3]{de1999exact}}] Assume Conditions~\ref{cond::Furstenberg-Kesten}, \ref{cond::stable_1}, $\rho \in (\alpha - 2, \alpha - 1) \cap (-\infty, 0]$, and $\int_{\mathbb{R}} xdF(x) = 0$ if $\alpha \in (1, 2)$. Let $(X_i)_{i\geq 1}$ be i.i.d. random variable with distribution function $F$. 
	%	\begin{enumerate}
		%			\item 
		%		 If $\rho > -\alpha$, then
		%		\begin{equation}
			%			\lim_{n\to\infty}(A(a_n))^{-1}\bigg(\mathbb{P}\bigg[\sum_{i=1}^nX_i/a_n-b_n \leq s \bigg] - H_\alpha(s)\bigg) =\frac{1}{2\pi}\int_{-\infty}^{+\infty}\frac{e^{-itx}}{it}h_\alpha(t)(A_\rho(t) + iB_\rho(t))dt
			%		\end{equation}
		%	uniformly for $s \in \mathbb{R}$. 
		%	\item 
		%	If $\rho <-\alpha$, then
		%	\begin{equation}
			%		\lim_{n\to\infty}n\bigg(\mathbb{P}\bigg[\sum_{i=1}^nX_i/a_n-b_n \leq s\bigg] - H_\alpha(s)\bigg) =\frac{1}{4\pi}\int_{-\infty}^{+\infty}\frac{e^{-itx}}{it}h_\alpha(t)(\log h_\alpha(t))^2dt
			%	\end{equation}
		%	uniformly for $s \in \mathbb{R}$.
		%	\end{enumerate}
	%	\end{proposition}

%	Let $\mathcal{C}(\mathbb{S}^{d-1}_+)$ be the set of real-valued continuous function on $\mathbb{S}^{d-1}_+$. 

\begin{theorem}[Exact rate of convergence  in law for $(S_n^x, X_n^x)$ with suitable norming]\label{thm::main}
	Assume Conditions~\ref{cond::Furstenberg-Kesten} and \ref{cond::stable_1}
	with $\rho \neq  -\alpha$. Let $f : \mathbb{S}_+^{d-1} \to \mathbb{C}$ be a Lipschitz function with respect to the Euclidean distance.
	\begin{enumerate}[{1.}]
		\item 
		If $\rho > -\alpha$, then uniformly for $s \in \mathbb{R}$ and $x \in \mathbb{S}^{d-1}_+$,
		\begin{equation}\label{lim-n-infty-A-a-n--1}
			\lim_{n\to\infty}(A(a_n))^{-1}\bigg(\mathbb{E}\bigg[f(X_n^x)\mathbbm{1}_{\{ \frac{S_n^x}{a_n}-b_n \leq s  \}}\bigg] - \nu(f) H_\alpha(s)\bigg) = \nu(f) M(s).
			%				=\frac{\nu(f)}{2\pi}\int_\mathbb{R}\frac{e^{-its}}{it}h_\alpha(t)(A_\rho(t) + iB_\rho(t))dt
		\end{equation}
		\item 
		If $\rho <-\alpha$ and Condition~\ref{cond::when_modulus_large} holds, then uniformly for $s \in \mathbb{R}$ and $x \in \mathbb{S}^{d-1}_+$,
		\begin{equation}\label{lim-n-bigg-mathbb-E}
			\lim_{n\to\infty}n\bigg(\mathbb{E}\bigg[f(X_n^x)\mathbbm{1}_{\{ \frac{S_n^x}{a_n}-b_n \leq s\}}\bigg] - \nu(f) H_\alpha(s)\bigg) = \nu(f)M(s) 
			+ \delta (f) \,  N(s), 
			%				&=\frac{\nu(f)}{4\pi}\int_\mathbb{R}\frac{e^{-its}}{it}h_\alpha(t)(\log h_\alpha(t))^2d + \frac{Uf(x)}{2\pi}\int_\mathbb{R} \frac{e^{-its}}{-it}C(t,\alpha)|t|^\alpha h_\alpha(t)dt,
		\end{equation} 
		where $\delta $ is a bounded linear mapping  on the set of Lipschitz functions on $\mathbb S_+^{d-1}$ such that 
		$\Delta  f =  \delta (f)  \mathbf 1$. 
	\end{enumerate}
\end{theorem}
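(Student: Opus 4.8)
\textit{Proof strategy.} The plan is to run a Fourier-analytic argument built on a fine spectral analysis of the perturbed transfer operators
\[
P_t f(x) = \mathbb{E}\big[\,e^{it\sigma(A_1,x)}f(A_1\cdot x)\,\big] = \int e^{it\sigma(g,x)}f(g\cdot x)\,d\mu(g), \qquad t\in\mathbb{R},
\]
on the Banach space $\mathcal{L}$ of Lipschitz functions on $\mathbb{S}_+^{d-1}$; by the cocycle identity $\sigma(G_n,x)=\sum_{k=1}^n\sigma(A_k,X_{k-1}^x)$ one has $P_0=P$ and, exactly, $\mathbb{E}[f(X_n^x)e^{itS_n^x}]=P_t^nf(x)$. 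First I would record the spectral picture near $t=0$: under Conditions \ref{cond::Furstenberg-Kesten}--\ref{cond::stable_1}, the Furstenberg--Kesten bounds of Lemma \ref{lem::bound} (which make $\sigma(g,\cdot)$ constant up to an additive $O(1)$ and $x\mapsto g\cdot x$ a uniform contraction of $\mathbb{S}_+^{d-1}$) give $\|P_t-P\|\to0$ on $\mathcal{L}$, so that, as in \cite{hennion2008stable}, $P=P_0$ keeps a simple dominant eigenvalue $1$ with a spectral gap and, for $|t|\le\eta$, $P_t=\lambda_t\Pi_t+N_t$ with a rank-one projector $\Pi_t$, $\lambda_0=1$, $\Pi_0f=\nu(f)\mathbf{1}$ and $\|N_t^n\|\le C\kappa^n$ uniformly ($\kappa<1$). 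The new input, supplied by Condition \ref{cond::stable_1}, consists of second-order expansions of $\lambda_t$, of the right eigenfunction $v_t$ and of the left eigenfunctional $\phi_t$ (normalised by $\nu(v_t)=1$, $\langle\phi_t,v_t\rangle=1$) in scales dictated by the second-order regularly varying tail of $Z=\log|A_1X_0|$; crucially $\nu(P_t\mathbf{1})=\mathbb{E}[|A_1X_0|^{it}]=\varphi(t)$ is exactly the characteristic function of $Z$.

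Second, I would set up the Fourier inversion. Put $\tau_n=A(a_n)$ in case (1) and $\tau_n=1/n$ in case (2); note $1/\tau_n=o(a_n)$ in both (for case (1) because $|\rho|<1$). A standard two-sided (Esseen) smoothing argument with smoothing parameter of order $a_n$ --- legitimate since $H_\alpha$ has bounded density and the claimed limit is continuous in $s$ --- reduces the assertion, uniformly in $x,s$, to evaluating
\[
\frac{1}{2\pi}\int_{|t|\le\epsilon a_n}\frac{e^{-its}}{-it}\Big(e^{-itb_n}P_{t/a_n}^nf(x)-\nu(f)h_\alpha(t)\Big)\,dt
\]
up to $o(\tau_n)$: the $t=0$ singularity is harmless since $P^nf(x)-\nu(f)=O(\kappa^n)$, and the $|t|>\epsilon a_n$ tail of $\nu(f)h_\alpha$ is super-exponentially small in $a_n$. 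On $|t|\le\epsilon a_n$ the argument $t/a_n$ stays in $[-\epsilon,\epsilon]$, the spectral decomposition applies, the $N_{t/a_n}^n$ part is killed geometrically, and $e^{-itb_n}\lambda_{t/a_n}^n\Pi_{t/a_n}f(x)$ remains.

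The core is the uniform asymptotics of this term; write $e^{-itb_n}\lambda_{t/a_n}^n=h_\alpha(t)(1+\rho_n(t))$ and $\Pi_{t/a_n}f(x)=\nu(f)+r_n(t,x)$. For $\rho_n$: since $\nu(v_t)=1$, $\lambda_t-\varphi(t)=\nu\big((P_t-P)(v_t-\mathbf{1})\big)$, which is $o(\tau_n)$ after $t\mapsto t/a_n$, so $n\log\lambda_{t/a_n}-itb_n=n\log\varphi(t/a_n)-itb_n+o(\tau_n)$; transplanting the scalar second-order expansion of de~Haan--Peng \cite{de1999exact} gives $n\log\varphi(t/a_n)-itb_n=\log h_\alpha(t)-\tau_n J(t)+o(\tau_n)$ with $J=A_\rho+iB_\rho$ when $\rho>-\alpha$, while when $\rho<-\alpha$ (so $\alpha<1$, $a_n=n^{1/\alpha}$, $b_n=o(1/n)$) a further expansion of $n\log(1+(\lambda_{t/a_n}-1))$ yields $n\log\lambda_{t/a_n}-itb_n=\log h_\alpha(t)-\tfrac1n\tfrac12(\log h_\alpha(t))^2+o(1/n)$, matching $J=\tfrac12(\log h_\alpha)^2$; thus $\rho_n(t)=-\tau_n J(t)+o(\tau_n)$ (signs checked against \eqref{def-MsNs}). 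For $r_n$: the right eigenfunction contributes $v_{t/a_n}-\mathbf{1}$, which is $o(\tau_n)$ throughout case (1) (as $\rho>-1$ and $\rho>-\alpha$), and in case (2) one needs the sharper bound $v_t-\mathbf{1}=o(|t|^\alpha)$, coming from a cancellation of its would-be $|t|^\alpha$-leading term against $\lambda_t$ and $\varphi_x(t):=P_t\mathbf{1}(x)$ uniformly in $x$ --- the tail-equivalences of Lemma \ref{lem::bound} make the $|t|^\alpha$-coefficient of $\varphi_x(t)$ independent of $x$. The matrix effect sits in the left eigenfunctional: from $\nu P_t=\nu+\beta(t)\,\nu Q+o(|t|^\alpha)$ --- $\beta(t)=\log h_\alpha(t)+o(|t|^\alpha)$ being $x$-free, and $\nu Q$ appearing because on the event that $|\log\|A_1\||$ is large the direction $A_1\cdot x$ is governed by $Q$ (Condition \ref{cond::when_modulus_large}) --- one gets $\phi_t-\nu=\beta(t)(\nu-\nu Q)(P-I)^{-1}+o(|t|^\alpha)$, hence, using $(P-I)^{-1}=-\sum_{k\ge0}P^k$ on $\{\nu(\cdot)=0\}$ and stationarity, $\langle\phi_t,f\rangle=\nu(f)+\beta(t)\sum_{k\ge0}\nu\big((Q-P)P^kf\big)+o(|t|^\alpha)=\nu(f)+\beta(t)\,\delta(f)+o(|t|^\alpha)$ --- precisely the telescoping defining $\Delta=\delta(\cdot)\mathbf{1}$. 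So $r_n(t,x)=\tfrac1n(\log h_\alpha(t))\,\delta(f)+o(1/n)$ in case (2) and $o(A(a_n))$ in case (1). Multiplying out,
\[
e^{-itb_n}\lambda_{t/a_n}^n\Pi_{t/a_n}f(x)=\nu(f)h_\alpha(t)+\tau_n h_\alpha(t)\Big(-\nu(f)J(t)+\mathbbm{1}_{\{\rho<-\alpha\}}(\log h_\alpha(t))\,\delta(f)\Big)+o(\tau_n),
\]
uniformly in $x$ with an integrable dominating bound on $|t|\le\epsilon a_n$ from $|h_\alpha(t)|=e^{-c'|t|^\alpha}$; integrating term by term against $\tfrac1{2\pi}\tfrac{e^{-its}}{-it}$ and using dominated convergence yields $\nu(f)H_\alpha(s)$ at leading order and, at order $\tau_n$, exactly $\nu(f)M(s)$ in case (1) and $\nu(f)M(s)+\delta(f)N(s)$ in case (2), via $\tfrac1{it}=-\tfrac1{-it}$ and the definitions \eqref{def-MsNs}; general complex Lipschitz $f$ is handled by splitting into nonnegative Lipschitz pieces.

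I expect the main obstacle to be the second half of the $r_n$ analysis: proving, uniformly in $x$ and with error $o(|t|^\alpha)$ (equivalently $o(1/n)$ after scaling), that the $x$-dependence of $P_{t/a_n}^nf(x)$ enters only through the single scalar $\delta(f)$. This rests on (a) a careful analysis of $\mathbb{E}[(e^{it\sigma(A_1,x)}-1)f(A_1\cdot x)]$ showing that for $\alpha<1$ its whole $O(|t|^\alpha)$ contribution comes from the large-deviation regime of $\log\|A_1\|$ and, by Condition \ref{cond::when_modulus_large}, equals $\beta(t)\,Qf(x)+o(|t|^\alpha)$ with $\beta$ $x$-free, and (b) the cancellation giving $v_t-\mathbf{1}=o(|t|^\alpha)$; both lean on the tail-equivalences between $\log\|A_1\|$, $\log\iota(A_1)$ and $\log|A_1x|$ from Lemma \ref{lem::bound}. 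A secondary technical point is arranging the smoothing so that every error term is genuinely $o(\tau_n)$ uniformly in $s$; this is why the boundary case $\rho=-\alpha$ (equal orders of the two corrections) is excluded, and why $\rho<-\alpha$ additionally requires Condition \ref{cond::when_modulus_large}.
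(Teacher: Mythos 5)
Your proposal is correct and follows essentially the same route as the paper: an Esseen-type smoothing inequality with the correction term, the spectral decomposition of $P_t$ on $\mathcal{L}$, transplantation of the de Haan--Peng scalar expansion via $\phi_Z(t)=\nu(P_t\mathbf{1})$, the expansion $(P_t-P)f=C(t,\alpha)|t|^\alpha Qf+o(|t|^\alpha)$ from Condition \ref{cond::when_modulus_large} identifying $\delta(f)=\sum_{k\ge 0}\nu((Q-P)P^kf)$, and the cancellation $\Delta\mathbf{1}=0$ giving $\lambda(t)-\phi_Z(t)=o(|t|^{2\alpha})$. The only differences are cosmetic: you expand the left eigenfunctional and right eigenfunction of $P_t$ separately, whereas the paper obtains the same expansion of $\Pi_t-\Pi$ by comparing two expansions of $P_t^mf-\lambda(t)^mP_0f$ and letting $m\to\infty$, and the uniformity over $|t|\le\tau a_n$ that you flag as the main technical point is handled there by splitting the integration range against the bound $|\lambda(t/a_n)|^n\le K(t)$.
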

In the one-dimensional case $d = 1$,  Theorem~\ref{thm::main} has been proven in~\cite{de1999exact}.  Here we focus on the multidimensional case $d >1$.  Notice that by letting $f = \mathbf{1}$ and using  $\Delta \mathbf{1} = 0$,  from Theorem~\ref{thm::main} we derive  that uniformly in $s \in \mathbb{R}$ and $ x \in \mathbb{S}_+^{d-1}$,  
\begin{equation}
	\lim_{n \to \infty}l_n^{-1} \bigg(\mathbb{P}\bigg[\frac{S_n^x}{a_n} - b_n \leq s\bigg] - H_\alpha(s)\bigg) = M(s),
\end{equation}
where $l_n = A(a_n)$ if $\rho > -\alpha$ and $l_n = n^{-1}$ if $\rho < -\alpha$.
In order to prove Theorem~\ref{thm::main}, we will make use of the one-dimensional result derived in~\cite{de1999exact} together with the spectral gap theory developed in~\cite{hennion2008stable}. 

%	{\color{red}  In Theorem \ref{thm::main}, the case 
	%	 $\rho = -\alpha$ is excluded, just as in    \cite[Theorem 3 and Remark 2]{de1999exact}  for the one dimensional setting. }
Theorem \ref{thm::main} excludes the case $\rho = -\alpha$, consistent with the one-dimensional result presented in \cite[Theorem 3 and Remark 2]{de1999exact}.

\section{%Transition kernel 
	The transfer operator $P_t$  and the proof of Theorem~\ref{thm::local_limit_theorem}}\label{sec::2}
%	\subsection{The distance $\mathbf{d}$}

Throughout this section, we assume Conditions~\ref{cond::allowability_and_positivity} and \ref{cond::hennion}. 
The law of  the couple $(S_n^x, X_n^x)$ defined in \eqref{def-Snx-Xnx}  can be determined  by the 
%For $t \in \mathbb{R}$, define the
family of  transfer operators  $(P_t)_{t \in \mathbb R}$ defined as follows: for any bounded measurable function $f :   \mathbb{S}^{d-1}_+ \to \mathbb C$, 
\begin{equation}
	P_tf(x) = \mathbb{E}[e^{itS_1^x} f(X_1^x)] = \int e^{it\sigma(g, x)}f(g\cdot x)d\mu(g), \quad x \in \mathbb{S}^{d-1}_+. 
\end{equation}
Notice that $P_0 = P$. The $n$-fold composition of $P_t$  is given by
\begin{equation}
	P_t^nf(x ) = \mathbb{E}[e^{itS_n^x} f(X_n^x)], \quad x \in \mathbb{S}_+^{d-1}, \quad n \geq 1. 
\end{equation}
The following variant of Hilbert's distance $\mathbf{d}$, used in~\cite{hennion1997limit, hennion2008stable}, is important for our analysis. For $x, y \in \mathbb{S}_+^{d-1}$, define $m(x, y) = \min\{y_i^{-1}x_i : i=1, \cdots, d, y_i > 0\}$ and $\mathbf{d}(x, y) = \frac{1-m(x, y)m(y, x)}{1 + m(x, y)m(y, x)}$. This distance satisfies:
\begin{itemize}
	\item $\sup \{ \mathbf{d}(x, y) : x, y \in \mathbb{S}_+^{d-1} \} = 1$;
	\item $| x - y | \leq 2\mathbf{d}(x, y)$ for $x, y \in \mathbb{S}^{d-1}_+$; 
	\item $\mathbf{d}(g \cdot x, g\cdot y) \leq c(g) \mathbf{d}(x, y)$ for any nonnegative matrix $g$, $x, y \in \mathbb{S}^{d-1}_+$; 
	\item $c(g) \leq 1$ for any nonnegative matrix $g$, and $c(g) < 1$ if entries of $g$ are positive;
	\item $c(gg') \leq c(g)c(g')$ for any two nonnegative matrices $g, g'$.
\end{itemize}
We recall the Banach space $\mathcal{L}$ of  $\bf d$-Lipschitz functions   defined in~\cite{hennion2008stable}. Denote 
\begin{equation}
	m(f) = \sup \bigg\{ \frac{|f(x_1) - f(x_2)|}{\mathbf{d}(x_1, x_2)} : x_1, x_2 \in \mathbb{S}_+^{d-1}, x_1\neq x_2 \bigg\}
\end{equation}
for any function $f : \mathbb{S}_+^{d-1} \to \mathbb{C}$. Let $\mathcal{L} = \{f: \mathbb{S}_+^{d-1} \to \mathbb{C},   \mbox{measurable and } m(f) <+\infty \}$ be the Banach  space equipped with the norm \begin{equation}
	\| f\|_\mathcal{L} = \|f\|_{\infty} + m(f).
\end{equation}
Since $|x-y| \leq 2\mathbf{d}(x, y)$ for every $x, y\in\mathbb{S}_+^{d-1}$, the space $\mathcal{L}$ contains all Lipschitz functions on $\mathbb{S}_+^{d-1}$ with respect to the Euclidean distance.
The space ${\mathcal B} ( \mathcal L) $ of bounded linear operators  on $\mathcal L$, equipped with the  
operator norm % on $\mathcal{L}$
still denoted by $\| \cdot \|_{\mathcal{L}}$, is also a  Banach space.  Let $\Pi$ be the rank-one projection:
\begin{align} \label{def-Pi}
	\Pi f = \nu(f) \mathbf{1}, \quad \forall f \in \mathcal L, 
\end{align} 
where $\mathbf{1}$ denotes the constant function on $\mathbb{S}_+^{d-1}$ with value one. 

For a nonnegative matrix $g$, denote $\ell(g) = |\log\|g\| | + |\log \iota(g) |$. 

% Recall $Z$ 
%	is the real random variable such that $\mathbb{E}[f(Z)] = \int f(\sigma(g, x)) d\mu(g) d\nu(x)$ for any bounded measurable $f:\mathbb{R} \to \mathbb{R}$. 

We collect some useful results of~\cite{hennion2008stable} in the following proposition.  Recall that $Z= \log  | A_1 X_0 |$, where $X_0$ is independent of $A_1$ and has law $\nu$. 

\begin{proposition}[{\cite{hennion2008stable}}]\label{prop::useful}
	Assume Conditions~\ref{cond::allowability_and_positivity} and \ref{cond::hennion}. 
	\begin{enumerate}[{1.}]
		\item (Regularity of $P_t$ at $0$) As $t \to 0$, $\| P_t - P \|_{\mathcal{L}} =O (\epsilon(t) + |t|)$,
		where $\epsilon(t) = \int \min(|t|\ell(g), 2)d\mu(g)$. In particular, $\| P_t - P \|_\mathcal{L} = O(t^\beta)$ where $\beta = 1$ if $1 < \alpha \leq 2$, and $\beta < \alpha$ can be arbitrary close to $\alpha$ if $0 < \alpha \leq 1$. 
		\item (Spectral gap) There exists an interval $I$ that contains $0$, such that for each $t \in I$, $P_t$ has a unique dominant eigenvalue $\lambda(t) \in \mathbb{C}$ (i.e. the eigenvalue with largest modulus) and a rank-one corresponding eigenprojection $\Pi_t$, satisfying the following properties: 
		there exist $\kappa \in (0, 1)$, $C > 0$ such that: % for $t \in I$, 
		%			 the eigenvalue $\lambda(t)$ and its corresponding eigenprojection $\Pi_t$ satisfy the following properties: 
		\begin{itemize}
			\item $\lambda(0) = 1$, $\lambda$ is continuous on $I$, 
			$\kappa < |\lambda(t)| \leq 1$ for $t \in I$; 
			\item $\Pi_0 = \Pi$,    $\|\Pi_t - \Pi\|_\mathcal{L} = O( \|P_t - P\|_\mathcal{L})$ as $t \to 0$; 
			\item $\forall t \in I$, $R_t := P_t - \lambda(t)\Pi_t$  (so that $R_0= P-\Pi$) satisfies
			$ \forall n \geq 1$, 
			\begin{equation} \label{R-t-n-R-0-n}
				P_t^n = \lambda(t)^n \Pi_t + R_t^n, 
				\quad \|R_t^n\|_\mathcal{L} \leq C\kappa^n,  	
				\quad \|R_t^n - R_0^n\|_\mathcal{L} \leq C\kappa^n \| P_t - P \|_\mathcal{L}. 
				%\quad \forall n \geq 1.  % \quad \forall t \in I.
			\end{equation}
			%				Moreover, we have $\|R_t^n - R_0^n\|_\mathcal{L} \leq C\kappa^n \| P_t - P \|_\mathcal$ for 
		\end{itemize}
		\item (Estimation of $\lambda(t)$) As $t \to 0$, we have $\lambda(t) = \phi_Z(t) + O(\|P_t - P\|_{\mathcal{L}}^2)$, where $\phi_Z(t) = \mathbb{E}[e^{itZ}]$.
		\item (Domain of attraction) The random variable $Z$ belongs to the domain of attraction of an $\alpha$-stable law with $0 < \alpha \leq 2$, that is, there exist 
		sequences of real numbers 
		$(a_n)_{n \geq 1}, (b_n)_{n \geq 1}$ with $\lim_{n\to\infty}a_n=+\infty$, such that $\frac{\sum_{i=1}^n Z_i}{a_n} - b_n$ converges in law to an $\alpha$-stable law $s_\alpha$, where $(Z_i)_{i \geq 1}$ are i.i.d. copies of $Z$. Moreover, the sequence $(a_n)_{n\geq 1}$ can be chosen such that $\frac{nL(a_n)}{a_n^\alpha} = 1$ (with $L$ introduced in \ref{cond::hennion}).
	\end{enumerate}
\end{proposition}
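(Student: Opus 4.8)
\textbf{Proof strategy for Proposition~\ref{prop::useful}.}
The plan is to observe that each of the four assertions is already proven in Hennion and Herv\'e~\cite{hennion2008stable}, and to explain precisely how the statements there translate to the present formulation (with the $L^1$ vector norm, the Hilbert metric $\mathbf d$, and the Banach space $\mathcal L$). The only substantive point to verify is that our Condition~\ref{cond::hennion}, stated with the operator norm $\|A_1\|$, is equivalent to the condition imposed in~\cite{hennion2008stable} on the entry-wise norm $\|A_1\|_{1,1}$; this was already noted in the text immediately after Condition~\ref{cond::hennion}, since all matrix norms are comparable up to multiplicative constants, hence $\log\|A_1\|$ and $\log\|A_1\|_{1,1}$ differ by a bounded quantity, and the tail conditions in \eqref{frac-t-alpha} are insensitive to such a bounded shift (the slowly varying function $L$ and the constants $c_\pm$ are unchanged).

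First I would recall the quasi-compactness result in~\cite{hennion2008stable}: under Conditions~\ref{cond::allowability_and_positivity} and \ref{cond::hennion}, the operator $P=P_0$ acting on $\mathcal L$ is quasi-compact with a simple dominant eigenvalue $1$, whose eigenprojection is the rank-one operator $\Pi f=\nu(f)\mathbf 1$, and $R_0=P-\Pi$ has spectral radius $\kappa<1$; this gives $\|R_0^n\|_{\mathcal L}\le C\kappa^n$. For the regularity statement (item 1), I would reproduce the elementary estimate: since $|e^{it\sigma(g,x)}-1|\le\min(|t||\sigma(g,x)|,2)$ and $|\sigma(g,x)|\le\ell(g)$, splitting $P_t f-Pf=\int(e^{it\sigma(g,x)}-1)f(g\cdot x)\,d\mu(g)$ and controlling separately the sup-norm and the $\mathbf d$-Lipschitz seminorm (using that $g\cdot$ is $\mathbf d$-contracting with factor $c(g)\le 1$, and that $x\mapsto\sigma(g,x)$ is $\mathbf d$-Lipschitz with a constant controlled by $\ell(g)$) yields $\|P_t-P\|_{\mathcal L}=O(\epsilon(t)+|t|)$ with $\epsilon(t)=\int\min(|t|\ell(g),2)\,d\mu(g)$. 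The moment bounds implied by Condition~\ref{cond::hennion} then give $\epsilon(t)=O(|t|)$ when $1<\alpha\le 2$ and $\epsilon(t)=O(|t|^\beta)$ for any $\beta<\alpha$ when $0<\alpha\le 1$, which is the stated rate $O(t^\beta)$.

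Next, items 2 and 3 follow from standard analytic perturbation theory for quasi-compact operators (Keller--Liverani / Dunford--Taylor), applied to the family $t\mapsto P_t$ which by item~1 is continuous at $0$ in operator norm: for $t$ in a small interval $I\ni 0$, the isolated simple eigenvalue $1$ of $P_0$ perturbs to a simple dominant eigenvalue $\lambda(t)$ with rank-one eigenprojection $\Pi_t$ depending continuously on $t$, $\|\Pi_t-\Pi\|_{\mathcal L}=O(\|P_t-P\|_{\mathcal L})$, and the complementary part $R_t=P_t-\lambda(t)\Pi_t$ satisfies the decomposition $P_t^n=\lambda(t)^n\Pi_t+R_t^n$ together with $\|R_t^n\|_{\mathcal L}\le C\kappa^n$ and the Lipschitz-in-$t$ bound $\|R_t^n-R_0^n\|_{\mathcal L}\le C\kappa^n\|P_t-P\|_{\mathcal L}$ (shrinking $\kappa<1$ and $I$ if necessary). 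The eigenvalue estimate $\lambda(t)=\phi_Z(t)+O(\|P_t-P\|_{\mathcal L}^2)$ in item~3 comes from the formula $\lambda(t)=\nu(P_t\mathbf 1)+O(\|P_t-P\|_{\mathcal L}^2)$ valid for simple eigenvalues, combined with the identity $\nu(P_t\mathbf 1)=\int e^{it\sigma(g,x)}\,d\mu(g)\,d\nu(x)=\mathbb E[e^{itZ}]=\phi_Z(t)$, where we use the definition $Z=\log|A_1X_0|=\sigma(A_1,X_0)$ with $X_0\sim\nu$ independent of $A_1$ and the $\mu$-stationarity \eqref{nu-f-int-S-d-1-f-y} of $\nu$. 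Finally, item~4 is the classical statement that $Z$ lies in the domain of attraction of an $\alpha$-stable law: from the comparison $\log\iota(A_1)\le Z\le\log\|A_1\|$ (up to bounded error) and Condition~\ref{cond::hennion}, the tails of $Z$ obey the same regular-variation bounds as $\log\|A_1\|$, so $Z$ is in the domain of attraction of an $\alpha$-stable law with the stated index; the normalizing sequence $(a_n)$ can be taken to satisfy $nL(a_n)/a_n^\alpha=1$ since $a_n$ is, up to asymptotic equivalence, the generalized inverse of $t\mapsto t^\alpha/L(t)$.

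The main obstacle here is not a deep one: it is the bookkeeping needed to confirm that the hypotheses and conclusions of~\cite{hennion2008stable}, stated there for the entry-wise $L^1$ norm and a possibly different but equivalent Lipschitz space, transfer verbatim to our setting. Concretely, one must check that (i) the bounded perturbation relating $\log\|A_1\|$, $\log\|A_1\|_{1,1}$, $\log\iota(A_1)$ and $\log|A_1x|$ does not alter the tail constants $c_\pm$ or the slowly varying function $L$, which is Lemma~\ref{lem::bound} (referenced later in the paper), and (ii) the metric $\mathbf d$ and the space $\mathcal L$ used here coincide with (or are bi-Lipschitz equivalent to) those in~\cite{hennion2008stable, hennion1997limit}, so that quasi-compactness and all spectral estimates are inherited. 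Once these identifications are in place, Proposition~\ref{prop::useful} is a direct restatement of the corresponding results of~\cite{hennion2008stable}.
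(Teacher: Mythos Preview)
Your overall approach matches the paper's: cite Hennion--Herv\'e~\cite{hennion2008stable} for essentially everything. However, the paper singles out two claims in item~2 as \emph{not} being directly available from~\cite{hennion2008stable} and supplies short proofs for them, whereas you sweep both into ``standard analytic perturbation theory''. The first is the bound $|\lambda(t)|\le 1$: this is \emph{not} a consequence of Keller--Liverani/Dunford--Taylor perturbation (which only gives continuity of $\lambda$, hence $|\lambda(t)|>\kappa$ for $t$ near $0$); it requires the specific observation that $P_t$ is a contraction in the sup-norm, $\|P_t v\|_\infty\le\|v\|_\infty$, applied to the eigenfunction $v_t=\Pi_t\mathbf 1$. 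The second is the uniform-in-$n$ Lipschitz bound $\|R_t^n-R_0^n\|_{\mathcal L}\le C\kappa^n\|P_t-P\|_{\mathcal L}$: here your attribution to perturbation theory is correct, and the paper simply makes the argument explicit via the Riesz contour integral $R_t^n=\frac{1}{2\pi i}\int_{|z|=\kappa_1}z^n(z-P_t)^{-1}\,dz$ together with a Neumann series for the resolvent difference. By contrast, the norm-equivalence bookkeeping you flag as the ``main obstacle'' is dealt with in the paper by a one-line remark before the proposition and plays no role in the proof itself.
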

\begin{proof}
	We only need to prove the property that $|\lambda(t)| \leq 1$ for $t \in I$ and the last assertion in ~\eqref{R-t-n-R-0-n}, because other results have been shown 
	%		{\color{blue}
		in~\cite[Theorems 3.2 and 3.3, Propositions 3.1, 4.1 and 4.2, and Proof of Proposition 4.2]{hennion2008stable}.
		%		}. 
	
	%		{\color{blue}
		For $t \in I$, let $v_t = \Pi_t \mathbf{1}$ be an eigenfunction of $P_t$: $P_t v_t = \lambda(t) v_t$. Then, by the definition of $P_t$, we know that $|\lambda(t)| \|v_t\|_\infty = \|P_tv_t\|_\infty \leq \|v_t\|_\infty$. Thus $|\lambda(t)| \leq 1$ for $t \in I$. 
		%		}
	
	Let $\kappa_1 = \frac{1+\kappa}{2}$, $D = \{ z \in \mathbb{C} : |z| = \kappa_1 \}$. For any $t \in I$, since the spectrum of $R_t$ lies inside $D$, we have $R_t^n = P_t^n - \lambda(t)^n\Pi_{t} = \frac{1}{2\pi i}\int_{\partial D} z^n (z-P_t)^{-1}dz$ for $n \geq 1$. Using the identity  $(a-b)^{-1} - a^{-1} = a^{-1} \sum_{m \geq 1} (ba^{-1})^m$ with $a = z-P_0, b = P_t-P_0$, we know there exists $C > 0$ such that $\| R_t^n - R_0^n \|_\mathcal{L} \leq C\kappa_1^n \|P_t - P\|_\mathcal{L}$ for all $n \geq 1$ and all $t \in \mathbb R$ with $|t|$ small enough, say $|t| \leq \eta$, so the last assertion in ~\eqref{R-t-n-R-0-n} holds with $\kappa$ replaced by $\kappa_1$ and $I = [-\eta, \eta]$. 
	%We finish the proof by noticing that other statements in this proposition still hold while choosing $\kappa$ to be $\kappa_1$ and contracting $I$. 
\end{proof}

Throughout this section,  
the interval $I$ and the sequences 
$(a_n)_{n\geq 1}$ and $ (b_n)_{n \geq 1}$ are as given in % Part 4 of 
Proposition~\ref{prop::useful},  with $\frac{nL(a_n)}{a_n^\alpha} = 1$.

\begin{lemma}\label{lem::exponential-decay}
	Assume Conditions~\ref{cond::allowability_and_positivity}, \ref{cond::hennion}, and the measure $\mu$ is non-arithmetic. Then, the spectral radius of $P_t$ for $t \in \mathbb{R}\setminus \{0 \}$ is strictly smaller than one (so that $|\lambda(t)| < 1$ for $t \in I\setminus \{ 0 \}$).  Moreover, for any compact set $K \subset \mathbb{R} \setminus \{ 0 \}$ and $f \in \mathcal{L}$, there exists $r \in (0, 1)$ such that
	\begin{equation}
		\sup_{t \in K} \| P_t^n f\|_\infty \leq r^n \|f\|_\mathcal{L}, \quad \forall n \geq 1.
	\end{equation}
\end{lemma}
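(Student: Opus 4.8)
The plan is to carry out the standard quasi-compactness analysis of the twisted transfer operators $P_t$, the point being that non-arithmeticity forbids eigenvalues of modulus $1$. Recall from~\cite{hennion2008stable} that, because $\mathbf{d}(g\cdot x,g\cdot y)\le c(g)\mathbf{d}(x,y)$ with $c(g)\le 1$ and $\mathbb{P}[\exists n\ge 1,\ G_n>0]>0$, each $P_t$ with $t\in\mathbb{R}$ obeys an Ionescu-Tulcea--Marinescu (Doeblin--Fortet) inequality on $\mathcal{L}$, with constants independent of $t$, in addition to $\|P_tf\|_\infty\le\|f\|_\infty$. Hence every $P_t$ is quasi-compact on $\mathcal{L}$, with spectral radius $\le 1$ and essential spectral radius $<1$; consequently the spectral radius of $P_t$ equals $1$ if and only if $P_t$ has an eigenvalue $e^{i\theta}$ on the unit circle with a (necessarily continuous) eigenfunction $f\in\mathcal{L}$, $f\ne 0$. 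So assume $P_tf=e^{i\theta}f$. From $|P_th|\le P|h|$ and $\mu$-stationarity of $\nu$ we get $\nu(|f|)\le\nu(P|f|)=\nu(|f|)$, whence $|f|=P|f|$ on $\mathrm{supp}\,\nu=\Lambda(\Gamma_\mu)$ by continuity; iterating and using the spectral gap of $P=P_0$ from Proposition~\ref{prop::useful} ($P^n|f|\to\nu(|f|)\mathbf{1}$ uniformly) shows $|f|\equiv\nu(|f|)$ on $\Lambda(\Gamma_\mu)$. Also $\nu(|f|)>0$, for otherwise $\|P_t^nf\|_\infty=\|f\|_\infty$ while $\|P_t^nf\|_\infty\le\|P^n|f|\|_\infty\to 0$. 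Rescaling, we may assume $|f|\equiv 1$ on $\Lambda(\Gamma_\mu)$.

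Fix $x\in\Lambda(\Gamma_\mu)$. As $g\cdot x\in\Lambda(\Gamma_\mu)$ for $\mu$-a.e.\ $g$, in $1=|f(x)|=\big|\int e^{it\sigma(g,x)}f(g\cdot x)\,d\mu(g)\big|$ the integrand has modulus $1$; equality in the triangle inequality forces $e^{it\sigma(g,x)}f(g\cdot x)=e^{i\theta}f(x)$ for $\mu$-a.e.\ $g$, hence for all $g\in\mathrm{supp}(\mu)$ by continuity in $g$ (and Condition~\ref{cond::allowability_and_positivity}). Writing $f=e^{i\vartheta}$ on $\Lambda(\Gamma_\mu)$ and extending $\vartheta$ arbitrarily to $\mathbb{S}_+^{d-1}$, this says $\exp\!\big(it\sigma(g,x)-i\theta+i(\vartheta(g\cdot x)-\vartheta(x))\big)=1$ on $\mathrm{supp}(\mu)\times\Lambda(\Gamma_\mu)$. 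Using the cocycle identities $\sigma(gg',x)=\sigma(g,g'\cdot x)+\sigma(g',x)$ and $(gg')\cdot x=g\cdot(g'\cdot x)$, one propagates the relation to arbitrary products of elements of $\mathrm{supp}(\mu)$ and then, by continuity, to all of $\Gamma_\mu=[\mathrm{supp}(\mu)]$; testing along the periodic directions $v_a$ with $a\in\Gamma_\mu$, $a>0$ (such $a$ exist by Condition~\ref{cond::allowability_and_positivity}) fixes the phase and reduces matters to $\theta=0$. Since $\sigma(g,x)=\log|gx|$ when $|x|=1$, we arrive at $\exp\!\big(it\log|gx|+i(\vartheta(g\cdot x)-\vartheta(x))\big)=1$ for all $g\in\Gamma_\mu$ and $x\in\Lambda(\Gamma_\mu)$, contradicting the non-arithmeticity of $\mu$. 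Hence $P_t$ has no unimodular eigenvalue and its spectral radius is $<1$ for every $t\ne 0$; in particular $|\lambda(t)|<1$ for $t\in I\setminus\{0\}$.

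For the uniform bound, note that $t\mapsto P_t$ is continuous for $\|\cdot\|_\mathcal{L}$, since $\|(P_t-P_s)f\|_\mathcal{L}=O\big((\epsilon(t-s)+|t-s|)\|f\|_\mathcal{L}\big)$ by the estimate of Proposition~\ref{prop::useful}(1), and the Doeblin--Fortet constants above do not depend on $t$. Fix a compact $K\subset\mathbb{R}\setminus\{0\}$. For each $t_0\in K$ we have $\|P_{t_0}^k\|_\mathcal{L}^{1/k}\to r(P_{t_0})<1$, so $\|P_{t_0}^{k_0}\|_\mathcal{L}<1$ for some $k_0$; by continuity the same holds for $t$ in a neighbourhood of $t_0$. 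Covering $K$ by finitely many such neighbourhoods and letting $k$ be a common multiple of the corresponding exponents gives $\kappa:=\sup_{t\in K}\|P_t^{k}\|_\mathcal{L}<1$, whence $\sup_{t\in K}\|P_t^{n}\|_\mathcal{L}\le C\kappa^{\lfloor n/k\rfloor}$ for a constant $C$ and all $n\ge 1$. Therefore $\|P_t^nf\|_\infty\le\|P_t^nf\|_\mathcal{L}\le C\kappa^{\lfloor n/k\rfloor}\|f\|_\mathcal{L}$, and choosing $r\in(\kappa^{1/k},1)$ and absorbing the constant (in the usual way) yields $\sup_{t\in K}\|P_t^nf\|_\infty\le r^n\|f\|_\mathcal{L}$ for all $n\ge 1$.

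The main obstacle is the passage, in the second paragraph, from the cocycle relation on $\mathrm{supp}(\mu)\times\Lambda(\Gamma_\mu)$ to one on all of $\Gamma_\mu\times\Lambda(\Gamma_\mu)$ that matches the precise form appearing in the non-arithmeticity hypothesis; in particular the bookkeeping of the eigenvalue phase $e^{i\theta}$ along products, carried out via the cocycle identity and the periodic directions $v_a$. The uniform-in-$t$ part is routine once the $t$-uniform Doeblin--Fortet inequality and the norm-continuity of $t\mapsto P_t$ are in hand.
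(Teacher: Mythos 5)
Your overall route coincides with the paper's: a Doeblin--Fortet inequality giving quasi-compactness of $P_t$ on $\mathcal{L}$, exclusion of unimodular eigenvalues via non-arithmeticity, and a norm-continuity-plus-covering argument for the uniform bound on a compact $K$ (the paper proves the Doeblin--Fortet inequality \eqref{m-P-t-n-f-C-1-t} explicitly and delegates the quasi-compactness and the non-arithmeticity argument to \cite[Propositions 3.6, 3.7 and 3.10]{XiaoGramaLiu_BerryEssen22} and to \cite{hennion2008stable}). Your first and third paragraphs are essentially sound, with two small caveats: the Doeblin--Fortet bound carries a factor $C(1+|t|)\|f\|_\infty$, so the constants are only locally uniform in $t$ (which suffices), and ``absorbing the constant'' for the finitely many initial values of $n$ deserves a word, since for constant $f$ the strict inequality $\|P_t^nf\|_\infty<\|f\|_\mathcal{L}$ is not automatic.

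The genuine gap is exactly where you flag it, and it is not a routine technicality: it is the step in which non-arithmeticity is actually used, i.e. the heart of the lemma. From $P_tf=e^{i\theta}f$ with $|f|=1$ on $\Lambda(\Gamma_\mu)$ you only obtain, for $g$ in the support of the $n$-fold convolution $\mu^{*n}$, the relation $\exp\big(it\sigma(g,x)-in\theta+i(\vartheta(g\cdot x)-\vartheta(x))\big)=1$; the phase depends on the length $n$ of the product, whereas the paper's definition of non-arithmeticity has to be contradicted by a relation with a \emph{single} $\theta$ valid for all $g\in\Gamma_\mu$ and $x\in\Lambda(\Gamma_\mu)$. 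Turning this family of relations with phases $e^{in\theta}$ into one coherent relation on the closed semigroup (and justifying ``by continuity, to all of $\Gamma_\mu$'', since a limit of products of unbounded length carries no well-defined $n$) requires a genuine argument --- e.g. evaluating at the fixed points $v_a$ of positive elements $a\in\mathrm{supp}(\mu^{*m})$, using $\sigma(a^k,v_a)=k\log\lambda_a$ to pin down $\theta$ modulo $2\pi$, and exploiting elements lying in supports of different convolution powers. You name these tools but do not carry out the bookkeeping, and you yourself call it ``the main obstacle''. That missing step is precisely the content of the propositions the paper cites; as a self-contained proof your argument is incomplete at the decisive point, although it would become complete if you closed it by invoking \cite[Propositions 3.6, 3.7 and 3.10]{XiaoGramaLiu_BerryEssen22} or the corresponding analysis in \cite{hennion2008stable}.
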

\begin{proof}
	By arguing as in the proof of~\cite[Propositions 3.6, 3.7 and 3.10]{XiaoGramaLiu_BerryEssen22} with $s = 0$ and $ \gamma = 1$, we can show the quasi-compactness of $P_t$ and use the non-arithmeticity to prove this lemma. 
	To mimic the proof of~\cite{XiaoGramaLiu_BerryEssen22}, the key step is to verify the conditions of the theorem of Ionescu-Tulcea
	and Marinescu (see \cite[Proposition 3.6]{XiaoGramaLiu_BerryEssen22}, \cite{Ionescu-Tulcea-Marinescu-1950, Hennion-Herve-2001}), among which we only need to verify the so-called Doeblin-Fortet inequality: for $n \geq 1$, $t \in \mathbb{R}$, there exist $C > 0, r \in (0, 1)$, such that
	\begin{equation}\label{m-P-t-n-f-C-1-t}
		m(P_t^nf) \leq C(1+|t|)\|f\|_\infty + Cr^n m(f), \quad \forall f \in \mathcal{L}.
	\end{equation}
	
	Let $\mu^{(n)}$ be the probability measure of $G_n$. For $x, y \in \mathbb{S}_+^{d-1}$, we write $\frac{P_t^nf(x) - P_t^nf(y)}{\mathbf{d}(x,y)} = I_1 + I_2$, with 
	\begin{align}
		I_1 := \int \frac{e^{it\sigma(g, x)} - e^{it\sigma(g, y)}}{\mathbf{d}(x, y)} f(g\cdot x) d\mu^{(n)}(g), \quad I_2 := \int e^{it\sigma(g, y)} \frac{f(g\cdot x) - f(g\cdot y)}{\mathbf{d}(x, y)} d\mu^{(n)}(g).
	\end{align}
	For $I_1$, since $|\sigma(g, x)-\sigma(g, y)| \leq 2|\log(1-\mathbf{d}(x, y))|$ for any nonnegative matrix  $g$ with at least one positive entry in each row and column (\cite[Lemma 3.1]{hennion2008stable}), there exists $C_1 > 0$ such that $|I_1| \leq C_1|t|\|f\|_\infty$ if $\mathbf{d}(x, y) < \frac{1}{2}$, and $|I_1| \leq 4\|f\|_\infty$ otherwise. For $I_2$, we have $|I_2| \leq m(f) c(\mu^{(n)})$, where
	\begin{equation}
		c(\mu^{(n)}) := \sup\Big\{ \int \frac{\mathbf{d}(g\cdot x_1, g\cdot x_2)}{\mathbf{d}(x_1, x_2)} d\mu^{(n)}(g) : x_1, x_2 \in \mathbb{S}_+^{d-1}, x_1\neq x_2 \Big\}.
	\end{equation}
	By the properties of the distance $\mathbf{d}$ and Condition~\ref{cond::allowability_and_positivity}, we know that the sequence $ (c(\mu^{(n)}))$ is submultiplicative and $\lim_{n}c(\mu^{(n)})^{\frac{1}{n}} < 1$. So we finish the proof of~\eqref{m-P-t-n-f-C-1-t}.
\end{proof}	

\begin{lemma}\label{lem::decay_1}
	Assume Conditions~\ref{cond::allowability_and_positivity} and \ref{cond::hennion}. Let $\beta = 1$ if $1 < \alpha \leq 2$, and $\beta \in (\frac{\alpha}{2}, \alpha)$ otherwise. Then, for any fixed $t \in \mathbb{R}$,
	\begin{equation}\label{lim-lambda-phi-fixed-t}
		\lim_{n \to \infty}\Big(\lambda\big(\frac{t}{a_n}\big)^n-\phi_Z\big(\frac{t}{a_n}\big)^n \Big) = 0.
	\end{equation}
	Moreover, if $\alpha \neq 2$, then for any $\gamma \in (0, \frac{2\beta - \alpha}{\alpha})$ and $ \varepsilon > 0$, there exist positive numbers $N, \tau, C_1, C_2$ with $[-\tau, \tau] \subset I$, such that for all $n \geq N$ and $t \in [-\tau a_n, \tau a_n]$,
	\begin{equation}\label{lambda-t-a-n-n-K-t}
		\Big|\lambda\big( \frac{t}{a_n} \big)\Big|^n \leq K(t), 
		\quad 
		\Big|\phi_Z\big( \frac{t}{a_n}\big) \Big|^n \leq K(t),
	\end{equation}
	\begin{equation}\label{lambda-t-a-n-n-phi-Z}
		\Big|\lambda\big(\frac{t}{a_n}\big)^n - \phi_Z\big( \frac{t}{a_n} \big)^n\Big| \leq C_1 K(t) |t|^{2\beta} n^{-\gamma},
	\end{equation}
	with 
	%		$K_1(t) := \exp(-C_2 |t|^{\alpha}\max(|t|^\varepsilon, |t|^{-\varepsilon}))$, 
	$K(t) := \exp(-C_2 |t|^{\alpha}\min(|t|^\varepsilon, |t|^{-\varepsilon}))$. 
\end{lemma}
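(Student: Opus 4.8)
The plan is to reduce both assertions to pointwise estimates on $\lambda(s)$ and $\phi_Z(s)$ for $s$ in a small neighbourhood of $0$, via the telescoping identity
\[
  \lambda(s)^n - \phi_Z(s)^n = \bigl(\lambda(s) - \phi_Z(s)\bigr)\sum_{k=0}^{n-1}\lambda(s)^k\,\phi_Z(s)^{\,n-1-k},
\]
and then to specialise to $s = t/a_n$. Two inputs drive the argument: the expansion $\lambda(s) = \phi_Z(s) + O(\|P_s - P\|_{\mathcal L}^2) = \phi_Z(s) + O(|s|^{2\beta})$, coming from parts 1 and 3 of Proposition~\ref{prop::useful}; and a quantitative lower bound for $-\log|\phi_Z(s)|$ near $0$, coming from the fact (Proposition~\ref{prop::useful}, part 4) that $Z$ is in the domain of attraction of an $\alpha$-stable law with a norming sequence satisfying $nL(a_n)/a_n^\alpha = 1$. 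The arithmetic point that makes everything work is that $2\beta > \alpha$ in all the cases considered here, so $O(|s|^{2\beta})$ is negligible next to the main term $|s|^\alpha L(1/|s|)$ of $1-|\phi_Z(s)|$, and the normalization converts the gap $2\beta - \alpha$ into the extra factor $n^{-\gamma}$.

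For~\eqref{lim-lambda-phi-fixed-t}, fix $t$. Since $a_n \to \infty$, $s := t/a_n \to 0$ and $s \in I$ for $n$ large; using $|\lambda(s)| \le 1$ and $|\phi_Z(s)| \le 1$ from Proposition~\ref{prop::useful}, the telescoping sum has modulus at most $n$, whence the left-hand side of~\eqref{lim-lambda-phi-fixed-t} is $O\bigl(|t|^{2\beta}\, n\, a_n^{-2\beta}\bigr)$. Writing $n\, a_n^{-2\beta} = L(a_n)^{-2\beta/\alpha}\, n^{1-2\beta/\alpha}$ through the normalization, this tends to $0$: for $\alpha < 2$ the exponent $1 - 2\beta/\alpha$ is negative because $\beta > \alpha/2$, and the slowly varying factor is absorbed; for $\alpha = 2$ (so $\beta = 1$) it equals $1/L(a_n) \to 0$ because $L \to \infty$ by Condition~\ref{cond::hennion}.

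For the bounds~\eqref{lambda-t-a-n-n-K-t}, I would first show that there are $c_0 > 0$, $\delta > 0$ with $|\phi_Z(s)| \le \exp\bigl(-c_0 |s|^\alpha L(1/|s|)\bigr)$ for $0 < |s| \le \delta$: this follows from $-2\log|\phi_Z(s)| \ge 1 - |\phi_Z(s)|^2 = \mathbb E\bigl[1 - \cos\bigl(s(Z - Z')\bigr)\bigr]$, with $Z'$ an independent copy of $Z$, combined with the classical Fourier–tail estimate for the symmetric random variable $Z - Z'$, whose tail is of order $t^{-\alpha}L(t)$, giving $\mathbb E\bigl[1-\cos(s(Z-Z'))\bigr] \ge c\,|s|^\alpha L(1/|s|)$ near $0$. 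Feeding this into $\lambda(s) = \phi_Z(s) + O(|s|^{2\beta})$ and using $2\beta > \alpha$, one obtains $|\lambda(s)| \le \exp\bigl(-\tfrac{c_0}{2}|s|^\alpha L(1/|s|)\bigr)$ for $|s|$ small. Substituting $s = t/a_n$ and using $n\, a_n^{-\alpha} = 1/L(a_n)$ yields $|\phi_Z(t/a_n)|^n \le \exp\bigl(-c_0\,|t|^\alpha\, L(a_n/|t|)/L(a_n)\bigr)$ and the same for $|\lambda(t/a_n)|^n$ with $c_0/2$ in place of $c_0$; the Potter bounds for $L$ (see~\cite{haan06extreme}) then give $L(a_n/|t|)/L(a_n) \ge c_1 \min(|t|^\varepsilon, |t|^{-\varepsilon})$ uniformly for $|t| \le \tau a_n$ once $\tau$ is small and $n$ is large, which is~\eqref{lambda-t-a-n-n-K-t} with $C_2$ the smallest of the constants produced.

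For~\eqref{lambda-t-a-n-n-phi-Z}, apply the telescoping identity at $s = t/a_n$: each summand $|\lambda(s)|^k|\phi_Z(s)|^{n-1-k}$ is at most $\exp\bigl(-c_2(n-1)|s|^\alpha L(1/|s|)\bigr)$ by the pointwise bounds above, so the whole sum is at most $n\,K(t)$ (using $n \ge 2$ to replace $n-1$ by $n/2$ and invoking the Potter bound once more, possibly shrinking $C_2$). Multiplying by $|\lambda(s) - \phi_Z(s)| \le C\,|t|^{2\beta} a_n^{-2\beta}$ and writing $n\, a_n^{-2\beta} = L(a_n)^{-2\beta/\alpha}\, n^{-(2\beta - \alpha)/\alpha}$, which is $\le n^{-\gamma}$ for $n$ large since $\gamma < (2\beta-\alpha)/\alpha$ and the slowly varying factor is absorbed into an arbitrarily small power, gives the claimed bound. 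One finally takes $\tau$ so small that $[-\tau,\tau]\subset I$ and all the ``$|s|$ small'' requirements above hold, and $N$ so large that $a_n$ lies in the Potter range and the power estimates hold. The step I expect to be the main obstacle is making the domain-of-attraction input precise and uniform: the lower bound $-\log|\phi_Z(s)| \ge c\,|s|^\alpha L(1/|s|)$ near $0$ with exactly the function $L$ of Condition~\ref{cond::hennion}, and the uniform Potter control of $L(a_n/|t|)/L(a_n)$ over the full range $|t|\le\tau a_n$, which is what forces the $\min(|t|^\varepsilon,|t|^{-\varepsilon})$ shape of $K(t)$. By contrast, the expansion $\lambda = \phi_Z + O(|s|^{2\beta})$, the inequality $2\beta>\alpha$, and the telescoping are essentially routine.
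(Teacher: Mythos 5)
Your proposal is correct and its skeleton coincides with the paper's proof: both rest on $\lambda(s)=\phi_Z(s)+O(\|P_s-P\|_{\mathcal L}^2)=\phi_Z(s)+O(|s|^{2\beta})$ (Parts 1 and 3 of Proposition~\ref{prop::useful}), the elementary bound $|a^n-b^n|\le n\max(|a|,|b|)^{n-1}|a-b|$ (your telescoping identity is the same device), the normalization $nL(a_n)/a_n^\alpha=1$, a pointwise decay bound $|\phi_Z(s)|\le\exp(-c|s|^\alpha L(1/|s|))$ near $0$ transferred to $|\lambda(s)|$ via $2\beta>\alpha$, and a uniform lower bound on $L(a_n/|t|)/L(a_n)$ of the form $\tfrac12\min(|t|^\varepsilon,|t|^{-\varepsilon})$ over $|t|\le\tau a_n$, which is exactly how \eqref{lambda-t-a-n-n-K-t} and \eqref{lambda-t-a-n-n-phi-Z} arise, and the $\alpha=2$ case of \eqref{lim-lambda-phi-fixed-t} is handled identically via $L\to\infty$.

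The one genuinely different ingredient is how you obtain the key estimate $|\phi_Z(s)|\le\exp(-c_0|s|^\alpha L(1/|s|))$: the paper invokes the Ibragimov--Linnik canonical representation of $\log\phi_Z$ for laws attracted to a stable law, together with the tail asymptotics \eqref{1-F-u-L-u-c-+} (from~\cite[Proposition 4.1]{hennion2008stable}) to identify the slowly varying factor $\tilde L$ with $c_4L$; you instead symmetrize, using $-2\log|\phi_Z(s)|\ge 1-|\phi_Z(s)|^2=\mathbb E[1-\cos(s(Z-Z'))]$ and a truncated-second-moment/Karamata estimate for the regularly varying tail of $Z-Z'$. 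Your route is more elementary and self-contained (no appeal to the canonical representation), but note that it still needs the precise tail order $\mathbb P[|Z|>t]\asymp L(t)t^{-\alpha}$, which comes from \eqref{1-F-u-L-u-c-+} rather than from the bare domain-of-attraction statement in Part 4 that you cite, so that reference should be sharpened. Likewise you use Potter bounds where the paper uses Karamata's representation theorem for the ratio $L(a_n/|t|)/L(a_n)$; these are interchangeable, and your observation that $|t|\le\tau a_n$ forces $a_n/|t|\ge 1/\tau$ is exactly what makes the bound uniform. The bookkeeping you flag (shrinking $C_2$ to pass from $n$-th to $(n-1)$-th powers, absorbing $L(a_n)^{-2\beta/\alpha}$ into $n^{-\gamma}$ since $\gamma<(2\beta-\alpha)/\alpha$) is also how the paper concludes, so there is no gap.
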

\begin{proof}
	(1) We first  prove~\eqref{lim-lambda-phi-fixed-t}. By Part 1 of Proposition~\ref{prop::useful} and the relation $\frac{nL(a_n)}{a_n^\alpha} = 1$, there exists $c_1 > 0$ such that
	\begin{equation}\label{n-P-s-a-n-P-2-L-leq-C-1}
		n\| P_{\frac{t}{a_n}} - P\|^2_{\mathcal{L}} \leq c_1n|t|^{2\beta}a_n^{-2\beta} = c_1|t|^{2\beta} n^{\frac{\alpha - 2\beta}{\alpha}} L(a_n)^{\frac{-2\beta}{\alpha}}, \quad \forall t \in \mathbb{R}, \quad \forall n \geq 1.
	\end{equation}
	Since $L$ is unbounded if $\alpha = 2$ by Condition~\ref{cond::hennion}, we know from~\eqref{n-P-s-a-n-P-2-L-leq-C-1} that for fixed $t \in \mathbb{R}$, 
	\begin{equation}\label{lim-P-t-a-n-P-2-2-L}
		\lim_{n \to \infty} n\| P_{\frac{t}{a_n}} - P\|^2_{\mathcal{L}} = 0
	\end{equation}
	
	%		{\color{blue}
		Notice that for $a, b \in \mathbb{C} \setminus \{ 0 \}$ and $n \geq 1$, we have 
		\begin{equation}\label{a-n-b-n}
			|a^n - b^n| \leq \max(|a|^{n-1}, |b|^{n-1}) n|a-b|.
		\end{equation}
		Indeed, without loss of generality, we assume that $z := \frac{a}{b}$ satisfies $|z| \leq 1$. Then, we have that \mbox{$|1-z^n| \leq \sum_{i=1}^n |z^{i-1}-z^{i}| \leq n|1-z|$}. 
		
		Using~\eqref{a-n-b-n} with $a = \lambda(\frac{t}{a_n})$ and $b = \phi_Z(\frac{t}{a_n})$ together with the property that \mbox{$\lambda(t) = \phi_Z(t) + O(\|P_t - P\|_\mathcal{L}^2)$} by Part 3 of Proposition~\ref{prop::useful}, we know that there exists $c_2 > 0$ such that for $t \in \mathbb{R}$ and $n \geq 1$,
		\begin{align}\label{lambda-t-a-22}
			\Big|\lambda\big(\frac{t}{a_n}\big)^n - \phi_Z\big(\frac{t}{a_n}\big)^n\Big| & \leq \max\Big( \big|\phi_Z\big( \frac{t}{a_n} \big)\big|^{n-1}, \big|\lambda\big( \frac{t}{a_n} \big)\big|^{n-1} \Big)\,n\Big|\lambda\big(\frac{t}{a_n}\big) - \phi_Z\big(\frac{t}{a_n}\big)\Big|  \notag \\
			&  \leq c_2 \max\Big( \big|\phi_Z\big( \frac{t}{a_n} \big)\big|^{n-1}, \big|\lambda\big( \frac{t}{a_n} \big)\big|^{n-1} \Big)\,n\|P_{\frac{t}{a_n}} - P\|_\mathcal{L}^2. 
		\end{align}
		%		Notice that $|1-z^n| \leq \sum_{i=1}^n |z^{i-1}-z^{i}| \leq n|1-z|$ for any $n \geq 1$ and $z \in \mathbb{C}$ with $|z| \leq 1$. If $\Big|\frac{\lambda(\frac{t}{a_n})}{\phi_Z(\frac{t}{a_n})}\Big| \leq 1$, then using the property $\lambda(t) = \phi_Z(t) + O(\|P_t - P\|_\mathcal{L}^2)$ by Part 3 of Proposition~\ref{prop::useful}, we get that for some $c_2 > 0$, for any $n$ large enough,
		%		\begin{align}\label{lamba-frac-t-a-n-n-phi-Z-t-a-n-n-leq}
			%			\Big|\lambda(\frac{t}{a_n})^n - \phi_Z(\frac{t}{a_n})^n\Big| & = \big|\phi_Z(\frac{t}{a_n})\big|^n \bigg|1 - \bigg(\frac{\lambda(\frac{t}{a_n})}{\phi_Z(\frac{t}{a_n})}\bigg)^n\bigg| \notag \\
			%			& \leq \big|\phi_Z(\frac{t}{a_n})\big|^n \; n\bigg|1 - \frac{\lambda(\frac{t}{a_n})}{\phi_Z(\frac{t}{a_n})} \bigg| \notag \\
			%			 & \leq c_2 \big|\phi_Z\big( \frac{t}{a_n} \big)\big|^{n-1} n\|P_{\frac{t}{a_n}} - P\|_\mathcal{L}^2
			%		\end{align}
		%	 	For the case $\Big|\frac{\lambda(\frac{t}{a_n})}{\phi_Z(\frac{t}{a_n})}\Big| > 1$, we can also argue similarly. So we derive that for any $n$ large enough,
		%	 	\begin{equation}\label{lambda-t-a-22}
			%	 		\Big|\lambda(\frac{t}{a_n})^n - \phi_Z(\frac{t}{a_n})^n\Big| \leq c_2 \min\Big( \big|\phi_Z\big( \frac{t}{a_n} \big)\big|^{n-1}, \big|\lambda\big( \frac{t}{a_n} \big)\big|^{n-1} \Big)n\|P_{\frac{t}{a_n}} - P\|_\mathcal{L}^2. 
			%	 	\end{equation}
		Since $\big|\phi_Z\big( \frac{t}{a_n} \big)\big| \leq 1$ and $\big|\lambda\big( \frac{t}{a_n} \big)\big| \leq 1$ by Part 2 of Proposition~\ref{prop::useful}, we get from~\eqref{lim-P-t-a-n-P-2-2-L} that for fixed $t \in \mathbb{R}$, as $n \to \infty$, 
		\begin{equation}
			\Big|\lambda\big(\frac{t}{a_n}\big)^n - \phi_Z\big(\frac{t}{a_n}\big)^n\Big| \leq c_2\, n\| P_{\frac{t}{a_n}} - P\|^2_{\mathcal{L}} \to 0.
		\end{equation}
		%	 	}
	
	(2) We next prove~\eqref{lambda-t-a-n-n-K-t}. Assume $\alpha \neq 2$. Recall that $F$ is the distribution function of $Z$. By~\cite[Proposition 4.1]{hennion2008stable} and Condition~\ref{cond::hennion}, 
	\begin{equation}\label{1-F-u-L-u-c-+}
		1-F(u) = \frac{L(u)(c_+ + o(1))}{u^\alpha}, \quad F(-u) = \frac{L(u)(c_- + o(1))}{u^\alpha}, \quad u \to +\infty.
	\end{equation}
	Since $Z$ is in the domain of attraction of an $\alpha$-stable law, from~\cite[Theorem 2.6.5]{Ibragimov-Linnik-1971} we know that $\log \phi_Z(t)$ has the form 
	\begin{equation}\label{log-phi-Z-t}
		\log \phi_Z(t) = i\gamma t - c_3|t|^\alpha \tilde{L}\big( \frac{1}{|t|} \big) \big( 1 - i\chi \,\mathrm{sgn}(t) w(t, \alpha) \big), \quad t \in \mathbb{R} \setminus \{0\},
	\end{equation}
	where $\gamma \in \mathbb{R}, c_3 > 0$, $\chi \in [-1,1]$, $\tilde{L}$ is a slowly varying function, $w(t, \alpha) = \tan(\frac{\pi \alpha}{2})$ if $\alpha \neq 1$ and $w(t, 1) = \frac{2\log|t|}{\pi}$. From the proof of~\cite[Theorem 2.6.5]{Ibragimov-Linnik-1971} in the case $0 < \alpha < 2$, we can deduce from~\eqref{1-F-u-L-u-c-+} that $\tilde{L}(s) = (c_4 + o(1))L(s)$ as $s \to +\infty$ for some $c_4 > 0$. In particular, with $c_5 = c_3c_4$, we have $|\phi_Z(t)| = \exp(-c_5|t|^\alpha L(\frac{1}{|t|})(1 + o(1)))$ as $t \to 0$. Since $\lambda(t) = \phi_Z(t) + O(\|P_t - P\|_\mathcal{L}^2) = \phi_Z(t) + O(|t|^{2\beta})$ and $2\beta > \alpha$, we can choose $\tau_1 > 0$ small enough and $c_6 \in (0, c_5)$ such that $|\lambda(t)| \leq \exp(-c_6|t|^\alpha L(\frac{1}{|t|}))$ when $|t| \leq \tau_1$, hence
	\begin{equation}\label{lambda-t-a-n-n}
		\Big|\lambda\big( \frac{t}{a_n}\big)\Big|^n \leq \exp\bigg(-c_6|t|^\alpha \frac{nL(a_n)}{a_n^\alpha} \frac{L(\frac{a_n}{|t|})}{L(a_n)}\bigg), \quad \forall t \in [-\tau_1 a_n, \tau_1 a_n]. 
	\end{equation}
	Since $L$ is slowly varying, by Karamata's characterization theorem (\cite[Theorem B.1.6]{haan06extreme}), there exist two measurable functions $b: \mathbb R_+ \to \mathbb R$ and $ c:  \mathbb R_+ \to \mathbb R_+$ with $\lim_{u \to \infty}b(u) = 0$ and $\lim_{u \to\infty} c(u) = 1$, such that $L(u) = c(u)\exp(\int_{u_0}^u \frac{b(x)}{x} dx)$ for $u \geq 1$. It follows that for any $\varepsilon > 0$, we can choose $\tau_2 > 0$ small enough and $N \geq 0$ such that
	\begin{equation}\label{L-a-n-t-L-a-n}
		\frac{L(\frac{a_n}{|t|})}{L(a_n)} = \frac{c(\frac{a_n}{|t|})}{c(a_n)} \exp\bigg( \int_{a_n}^{\frac{a_n}{|t|}} \frac{b(u)}{u} du \bigg) \geq \frac{1}{2}\min(|t|^{\varepsilon}, |t|^{-\varepsilon}), \quad \forall t \in [-\tau_2 a_n, \tau_2 a_n], \quad \forall n \geq N
	\end{equation}
	(to see~\eqref{L-a-n-t-L-a-n}, we can discuss two cases $|t| \leq 1$ and $1 \leq |t| \leq \tau_2a_n$). Fix $\varepsilon > 0$ and choose $\tau \leq \min(\tau_1, \tau_2)$ small enough such that  $[-\tau, \tau] \subset I$. Combining~\eqref{lambda-t-a-n-n} ,~\eqref{L-a-n-t-L-a-n} and the condition $\frac{nL(a_n)}{a_n^\alpha} = 1$, we have 
	\begin{equation}\label{lambda-t-a-n-n-leq-exp}
		\Big|\lambda(\frac{t}{a_n})\Big|^n \leq \exp\Big(-\frac{c_6}{2} |t|^{\alpha}\min(|t|^\varepsilon, |t|^{-\varepsilon})\Big), \quad \forall t \in [-\tau a_n, \tau a_n], \quad \forall n \geq N. 
	\end{equation}
	This proves the first inequality in~\eqref{lambda-t-a-n-n-K-t}. The proof for the second inequality is similar. 
	
	%		{\color{blue}
		(3) We then prove~\eqref{lambda-t-a-n-n-phi-Z}. 
		Since $\gamma < \frac{2\beta-\alpha}{\alpha}$ and $L$ is slowly varying, we have $n^{\frac{\alpha - 2\beta}{\alpha}} L(a_n)^{\frac{-2\beta}{\alpha}} = o(n^{-\gamma})$ as $n \to \infty$. Plugging \eqref{lambda-t-a-n-n-K-t} and \eqref{n-P-s-a-n-P-2-L-leq-C-1} into \eqref{lambda-t-a-22}, we get \eqref{lambda-t-a-n-n-phi-Z}.
		%		}
	%		Since $\alpha \neq 2$, 
	%		we have $2\beta-\alpha >0$, so 
	%		we can choose $\tau \in (0, \frac{2\beta-\alpha}{2\beta\alpha})$.
	%		 Then, we see that $n\|P_{\frac{t}{a_n}} - P\|_\mathcal{L}^2 \to 0$ uniformly for $t \in [-n^\tau, n^\tau]$. By \eqref{lambda-t-a-n-n-K-t}, \eqref{n-P-s-a-n-P-2-L-leq-C-1} and \eqref{lamba-frac-t-a-n-n-phi-Z-t-a-n-n-leq}, this implies that for $n$ large enough, 
	%		By~\eqref{lambda-t-a-22}, there exists $c_7 > 0$ such that
	%		\begin{equation}\label{lambda-t-a-n-n-phi-Z-t-a-n-n}
		%			\Big|\lambda(\frac{t}{a_n})^n - \phi_Z(\frac{t}{a_n})^n\Big| \leq c_2 K(t)\, n\|P_{\frac{t}{a_n}} - P\|_\mathcal{L}^2 \leq c_3 K(t) |t|^{2\beta} n^{-\gamma}, \quad \forall t \in [-n^\tau, n^\tau],
		%		\end{equation}
	%		for some $c_3 > 0$. This shows~\eqref{lambda-t-a-n-n-phi-Z}.
\end{proof}

We now come to prove Theorem~\ref{thm::local_limit_theorem}. 
\begin{proof}[Proof of Theorem \ref{thm::local_limit_theorem}]

	(1) We first prove the weak convergence of the couple $(\frac{S_n^x}{a_n}-b_n, X_n^x)$. Let $x \in \mathbb{S}_+^{d-1}$, $f \in \mathcal{L}$ and $t \in \mathbb{R}$. 
	%		By~\cite[Theorem 1.1]{hennion2008stable}, we know that $\frac{S_n^x}{a_n}-b_n$ converges in law to some $\alpha$-stable law $s_\alpha$. 
	Denote by $h_\alpha$ the characteristic function of $s_\alpha$. By Part 4 of Proposition~\ref{prop::useful}, the characteristic function $\phi_Z$ of $Z$ satisfies $\lim_{n\to\infty} e^{-itb_n}\phi_Z(\frac{t}{a_n})^n \to h_\alpha(t)$. Notice that 
	\begin{equation}\label{E-e-it-S-n-x-a-n-b-n}
		\mathbb{E}[e^{it(S_n^x/a_n-b_n)} f(X_n^x)] = e^{-itb_n}P_{\frac{t}{a_n}}^nf(x) = e^{-itb_n}\Big(\lambda\big(\frac{t}{a_n}\big)^n \Pi_{\frac{t}{a_n}}f(x) + R_{\frac{t}{a_n}}^nf(x)\Big).
	\end{equation}
	Recall that $\Pi f = \nu(f) \mathbf{1}$. From Parts 1 and 2 of Proposition~\ref{prop::useful}, we see that there exists $C_2 > 0$ such that \mbox{$\|\Pi_{\frac{t}{a_n}}f - \Pi f\|_\infty \leq C_2 \|P_{\frac{t}{a_n}}-P\|_\mathcal{L}\to 0$}, and $R^n_{\frac{t}{a_n}} f(x) \to 0$ as $n \to \infty$. By \eqref{lim-lambda-phi-fixed-t} and \eqref{E-e-it-S-n-x-a-n-b-n}, we have
	\begin{equation}\label{lim-E-e-it}
		\lim_{n\to \infty}\mathbb{E}[e^{it(S_n^x/a_n-b_n)} f(X_n^x)] = \lim_{n \to \infty} \Pi f(x) e^{-itb_n}  \phi_Z\big(\frac{t}{a_n}\big)^n = \nu(f)h_\alpha(t).
	\end{equation}
	%		Note that for $g \in \mathcal{L}$, we can find strictly positive $f_1, \cdots, f_4 \in \mathcal{L}$ such that $g = (f_1 - f_2) + i(f_3 - f_4)$. We deduce from~\eqref{lim-E-e-it} that
	%		\begin{equation}
		%			\lim_{n\to \infty}\mathbb{E}[e^{it(S_n^x/a_n-b_n)} g(X_n^x)] = \nu(g)h_\alpha(t), \quad \forall t \in \mathbb{R}, \quad \forall g \in \mathcal{L}. 
		%		\end{equation} 
	%		{\color{blue}
		%		We write $\langle y, z\rangle = \sum_{i=1}^dx_iy_i$ for the scalar product of $y = (y_1, \cdots, y_d)^T$ and $z = (z_1, \cdots, z_d)^T$.
		%		}
	For each $v \in \mathbb{R}^d$, define $g_v(y) = e^{i\langle v, y\rangle}$, $y \in \mathbb{R}^d$, where $\langle v, y\rangle  := \sum_{i=1}^dv_iy_i$ is the scalar product of $v = (v_1, \cdots, v_d)^T \in \mathbb{R}_+^d$ and $y = (y_1, \cdots, y_d)^T \in \mathbb{R}_+^d$. Letting $f = g_v$ in~\eqref{lim-E-e-it}, we have
	\begin{equation}
		\lim_{n\to \infty}\mathbb{E}[e^{it(S_n^x/a_n-b_n)+i\langle v, X_n^x \rangle}] = h_\alpha(t)\int e^{i\langle v, y \rangle}d\nu(y), \quad \forall v \in \mathbb{R}^d.
	\end{equation}
	Since $t \in \mathbb{R}$ and $v \in \mathbb{R}^{d}$ are arbitrary, by L\'evy's continuity theorem, this proves that $(\frac{S_n^x}{a_n}-b_n, X_n^x)$ converges in law to $s_\alpha \otimes \nu$. 
	%		Since $\lim_{n\to\infty}\mathbb{E}[f(X_n^x)] = \nu(f)$, we get from~\eqref{lim-E-e-it} that the random variable $\frac{S_n^x}{a_n}-b_n$ weighted by $f(X_n^x)$ converges weakly to $\xi$ as $n \to \infty$. It follows that for any continuous and bounded $k : \mathbb{R} \to \mathbb{R}$, 
	%		\begin{equation}
		%			\lim_{n\to\infty}\frac{\mathbb{E}[k(\frac{S_n^x}{a_n}-b_n)f(X_n^x)]}{\mathbb{E}[f(X_n^x)]} = \int k(x) d\xi(x), 
		%		\end{equation}
	%		hence,
	%		\begin{equation}
		%			\lim_{n\to\infty}\mathbb{E}[k(\frac{S_n^x}{a_n}-b_n)f(X_n^x)] = \nu(f)\int k(x) d\xi(x).
		%		\end{equation}
	%		Since $f$ and $k$ are arbitrary chosen, by standard approximation argument, we know that $\lim_{n \to \infty} \mathbb{E}[\tau(\frac{S_n^x}{a_n}-b_n, X_n^x)] = \int \tau(y, z) \;(\nu \otimes \xi) (dy, dz)$ for any bounded continuous function $\tau : \mathbb{R} \times \mathbb{S}_+^{d-1} \to \mathbb{R}$. This proves the weak convergence of pair $(\frac{S_n^x}{a_n}-b_n, X_n^x)$. 
	
	(2) We then prove the local limit theorem. Our proof follows the approach of~\cite{bui2020asymptotic}. Let $f \in \mathcal{L}$ and $k \in L^1(\mathbb{R})$ be such that the Fourier transform 
	\begin{equation}
		\hat{k}(t) := \int_{\mathbb{R}}e^{-iut}k(u)du
	\end{equation}
	has support within $[-l, l]$ for some $l > 0$. By the Fourier inversion formula, for any $(x, y )\in \mathbb{S}_+^{d-1} \times \mathbb{R}$,
	\begin{align}
		a_n \mathbb{E}[f(X_n^x)k(y + S_n^x -a_nb_n)] & = \frac{a_n}{2\pi} \mathbb{E}\Big[f(X_n^x) \int_\mathbb{R} e^{ity + itS_n^x-ita_nb_n}\hat{k}(t)dt\Big] \notag \\
		& = \frac{1}{2\pi} \int_\mathbb{R} e^{it(\frac{y}{a_n} - b_n)} \hat{k}\big(\frac{t}{a_n}\big) P_{\frac{t}{a_n}}^n f(x) dt.
	\end{align}
	Let $M > 1, \tau \in (0, l)$ small enough with $[-\tau, \tau] \subset I$. We write 
	\begin{equation}
		a_n \mathbb{E}[f(X_n^x)k(y + S_n^x -a_nb_n)] = \sum_{i=1}^8 I_i,
	\end{equation}
	where
	\begin{align}
		I_1 & = \frac{1}{2\pi} \int_{\tau a_n \leq |t| \leq la_n} e^{it(\frac{y}{a_n} - b_n)} \hat{k}\big(\frac{t}{a_n}\big) P_{\frac{t}{a_n}}^n f(x) dt, \notag \\ % \quad 
		I_2 & =\frac{1}{2\pi} \int_{|t| \leq \tau a_n} e^{it(\frac{y}{a_n} - b_n)} \hat{k}\big(\frac{t}{a_n}\big) R_{\frac{t}{a_n}}^n f(x) dt, \notag \\
		I_3 & = \frac{1}{2\pi}\int_{M \leq |t| \leq \tau a_n} e^{it(\frac{y}{a_n} - b_n)} \hat{k}\big( \frac{t}{a_n} \big) \lambda\big( \frac{t}{a_n} \big)^n \Pi_{\frac{t}{a_n}}f(x) dt, \notag \\
		I_4 & =\frac{1}{2\pi} \int_{|t| \leq M} e^{it(\frac{y}{a_n} - b_n)} \hat{k}\big( \frac{t}{a_n} \big) \Big(\lambda\big( \frac{t}{a_n} \big)^n - \phi_Z\big(\frac{t}{a_n}\big)^n  \Big)\Pi_{\frac{t}{a_n}}f(x)dt   \notag \\
		I_5 & = \frac{1}{2\pi} \int_{|t| \leq M} e^{it(\frac{y}{a_n} - b_n)} \hat{k}\big( \frac{t}{a_n} \big) \phi_Z\big( \frac{t}{a_n}\big)^n(\Pi_{\frac{t}{a_n}} f(x) - \nu(f))dt, \notag \\
		I_6 & = \frac{\nu(f)}{2\pi} \int_{|t| \leq M} e^{\frac{ity}{a_n}} \hat{k}\big( \frac{t}{a_n} \big) \Big(e^{-itb_n}\phi_Z\big( \frac{t}{a_n} \big)^n - h_\alpha(t)\Big) dt, 
		\nonumber \\ 
		I_7 & = \frac{-\nu(f)}{2\pi} \int_{|t| \geq M} e^{\frac{ity}{a_n}} \hat{k}\big( \frac{t}{a_n} \big) h_\alpha(t) dt, \notag \\
		I_8 & = \frac{\nu(f)}{2\pi} \int_\mathbb{R} e^{\frac{ity}{a_n}} \hat{k}\big( \frac{t}{a_n}\big) h_\alpha(t)dt.
	\end{align}
	We have, uniformly in $(x, y)$, $\lim_{n \to \infty}I_1 = 0$ by Lemma~\ref{lem::exponential-decay}, $\lim_{n \to \infty}I_2 \to 0$ by Part 2 of Proposition~\ref{prop::useful}, $\lim_{n \to \infty}I_4 = \lim_{n \to \infty} I_5 = \lim_{n \to \infty} I_6 = 0$ for any fixed $M$ by the dominated convergence theorem and Lemma~\ref{lem::decay_1}. We also get that, uniformly in $(n, x, y)$,  $\lim_{M \to \infty} I_3 = 0$ and $\lim_{M \to \infty} I_7 = 0$, again by dominated convergence theorem and Lemma~\ref{lem::decay_1}. Thus, we get that $\lim_{n \to \infty} | a_n \mathbb{E}[f(X_n^x)k(y + S_n^x -a_nb_n)] - I_8 | = 0$ uniformly in $(x, y)$. 
	
	Denote the inverse Fourier transform of a function $g \in L^1(\mathbb{R})$  by $\widecheck{g}(u) := \frac{1}{2\pi} \int_\mathbb{R} e^{iut}g(t)dt$, $u \in \mathbb R$. By the definition of $h_\alpha$, we have $h_\alpha(t) = \int_\mathbb{R} e^{iut}p_\alpha(u)du, t \in \mathbb{R}$, which implies that $h_\alpha = 2\pi\widecheck{p_\alpha}$. Let $\psi = \frac{h_\alpha}{2\pi} = \widecheck{p_\alpha}$ (so $\hat{\psi} = p_\alpha$ by the Fourier inversion theorem). Note that $\hat{\varphi}(t) = e^{\frac{ity}{a_n}}\hat{k}(\frac{t}{a_n})$ is the Fourier transform of the function $\varphi(u) := a_n k(a_n u + y)$.
	%		 and $\psi(t) := \frac{h_\alpha(t)}{2\pi} = \frac{1}{2\pi} \int_\mathbb{R} e^{itu}$ is the inverse Fourier transform of density $p_\alpha$ (so $\hat{\psi} = p_\alpha$). 
	Using Parseval's identity $\int_{\mathbb{R}} \hat{\varphi}(t) \psi(t) dt = \int_{\mathbb{R}} \varphi(t) \hat{\psi}(t) dt$, we know that
	\begin{equation}
		I_8 = \frac{\nu(f)}{2\pi} \int_\mathbb{R} e^{\frac{ity}{a_n}} \hat{k}\big( \frac{t}{a_n}\big) h_\alpha(t)dt = \nu(f) \int_\mathbb{R} a_n k(a_nu + y)p_\alpha(u) du = \nu(f) \int k(u)p_\alpha\big( \frac{u-y}{a_n} \big) du. 
	\end{equation}
	It follows that~\eqref{lim-n-sup-x-y-S-d-1} holds for $f\in \mathcal{L}$ and $k\in L^1(\mathbb{R})$ such that $\hat{k}$ has compact support. We can then argue as in~\cite[Proof of Theorem 2.2]{bui2020asymptotic} to establish~\eqref{lim-n-sup-x-y-S-d-1} for any continuous function $f$ and directly Riemann integrable function $k$. 
\end{proof}

\section{The proof of Theorem~\ref{thm::main}}\label{sec::3}
In this section,  we  study the convergence rate in law of $(S_n^x, X_n^x)$, 
assuming Conditions~\ref{cond::Furstenberg-Kesten} and \ref{cond::stable_1} in place of Conditions~\ref{cond::allowability_and_positivity} and \ref{cond::hennion}. Since Conditions \ref{cond::Furstenberg-Kesten} and \ref{cond::stable_1} imply \ref{cond::allowability_and_positivity} and \ref{cond::hennion}, Proposition \ref{prop::useful} and Lemma  % stated in Section~\ref{sec::2} 
\ref{lem::decay_1}
still apply. 

\subsection{Auxiliary   lemmas}  We first give some auxiliary results required for the proof of Theorem~\ref{thm::main}.

The  % following result
first  lemma concerns a property of the cocycle $\sigma (A_1, x) = \log |A_1 x| $, the contraction of the action of $A_1$ on $\mathbb S_+^{d-1}$, and an  improvement of 
Proposition \ref{prop::useful}  about  the regularity of $P_t$ at $0$ for $\alpha <1$. 

\begin{lemma}\label{lem::bound}
	Assume Condition~\ref{cond::Furstenberg-Kesten} and \ref{cond::stable_1}. Then, there exist $C > 0$, $r \in (0, 1)$ such that 
	a.s. 
	for any $x, y \in \mathbb{S}_+^{d-1}$, 
	\begin{equation}\label{sigma-A-1-x}
		|\sigma(A_1, x) - \log \|A_1\|| \leq C, \quad \mathbf{d}(A_1\cdot x, A_1\cdot y) \leq r \, \mathbf{d}(x, y). %  \quad \mbox{a.s.}
	\end{equation} 
	If additionally   $\alpha < 1$, then
	\begin{equation}\label{P-t-P-L-O}
		\|P_t-P\|_\mathcal{L} = O(|t|^\alpha), \quad \mbox{ as } t\to 0.
	\end{equation}
\end{lemma}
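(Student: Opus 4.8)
\medskip
\noindent\textbf{Proof strategy.} I would handle the three assertions in turn: the bound on $\sigma(A_1,\cdot)$ is an elementary computation, the contraction estimate is a quantitative Birkhoff inequality, and \eqref{P-t-P-L-O} comes from feeding a tail estimate into Part~1 of Proposition~\ref{prop::useful}. For the first bound, fix $x\in\mathbb S_+^{d-1}$ and write $c_j=\sum_{i=1}^d A_1(i,j)$ for the $j$-th column sum, so that $|A_1 x|=\sum_{j=1}^d c_j x_j$; since $x$ is a probability vector this lies between $\min_j c_j=\iota(A_1)$ and $\max_j c_j=\|A_1\|$. Condition~\ref{cond::Furstenberg-Kesten} gives $\max_{i,j}A_1(i,j)\le K\min_{i,j}A_1(i,j)$ a.s., and summing over $i$ yields $\max_j c_j\le K\min_j c_j$, i.e.\ $K^{-1}\|A_1\|\le\iota(A_1)\le\|A_1\|$ a.s. Hence $K^{-1}\|A_1\|\le|A_1 x|\le\|A_1\|$ for every $x$, so $|\sigma(A_1,x)-\log\|A_1\||\le\log K$ and one takes $C=\log K$. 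A by-product I will reuse is $|\log\|A_1\|-\log\iota(A_1)|\le\log K$ a.s.

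For the contraction $\mathbf d(A_1\cdot x,A_1\cdot y)\le r\,\mathbf d(x,y)$, the listed properties of $\mathbf d$ already give $\mathbf d(A_1\cdot x,A_1\cdot y)\le c(A_1)\mathbf d(x,y)$ with $c(A_1)<1$ a.s.\ (as $A_1>0$ a.s.\ under \ref{cond::Furstenberg-Kesten}); the task is to make this uniform, which cannot be done by compactness since $\mathrm{supp}(\mu)$ may be unbounded. I would use that $\mathbf d$ is the image of Hilbert's projective metric $d_H$ under $s\mapsto\tanh(s/2)$, together with the fact that for a positive matrix $g$ one has $d_H(g\cdot x,g\cdot y)\le\min\{\tanh(\Delta(g)/4)\,d_H(x,y),\ \Delta(g)\}$, where $\Delta(g)=\log\max_{i,j,k,l}g(i,k)g(j,l)/(g(i,l)g(j,k))$ is the projective diameter of $g$. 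A short case split on whether $d_H(x,y)$ is small or large turns this into $\mathbf d(g\cdot x,g\cdot y)\le r_{\Delta(g)}\,\mathbf d(x,y)$ for a function $\delta\mapsto r_\delta$ that is continuous, nondecreasing and $<1$ on $[0,\infty)$; since Condition~\ref{cond::Furstenberg-Kesten} forces $\Delta(A_1)\le 2\log K$ a.s., one takes $r=r_{2\log K}<1$. (Alternatively one may simply quote the $K$-dependent bound for $c(g)$ in Hennion~\cite{hennion1997limit}.)

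For \eqref{P-t-P-L-O}, I would start from Part~1 of Proposition~\ref{prop::useful}: $\|P_t-P\|_{\mathcal L}=O(\epsilon(t)+|t|)$ with $\epsilon(t)=\int\min(|t|\ell(g),2)\,d\mu(g)$ and $\ell(g)=|\log\|g\||+|\log\iota(g)|$. From the first bound in \eqref{sigma-A-1-x} applied with $x=X_0$ one gets $|Z-\log\|A_1\||\le C$ a.s., and combined with $|\log\|A_1\|-\log\iota(A_1)|\le\log K$ this gives $\ell(A_1)\le 2|Z|+C'$ a.s.\ for a constant $C'$. When $\alpha<1$ the admissible range $\rho\in(\alpha-2,\alpha-1)\cap(-\infty,0]$ of Condition~\ref{cond::stable_1} reduces to $(\alpha-2,\alpha-1)$, so $\rho<0$ and the limit $c$ in \eqref{c} is finite and positive; hence $\mathbb P[\ell(A_1)>u]=O(u^{-\alpha})$. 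The layer-cake formula gives $\epsilon(t)=|t|\int_0^{2/|t|}\mathbb P[\ell(A_1)>u]\,du$, and since $\alpha<1$ the tail estimate integrates to $O(|t|^{\alpha-1})$, so $\epsilon(t)=O(|t|^{\alpha})$. As $|t|=o(|t|^{\alpha})$ when $t\to0$, this yields $\|P_t-P\|_{\mathcal L}=O(|t|^{\alpha})$.

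I expect the main obstacle to be the uniform contraction coefficient $c(A_1)\le r<1$: it is the only step that is not essentially bookkeeping, and it is precisely where the uniform control of entry ratios in Condition~\ref{cond::Furstenberg-Kesten} (equivalently, of the projective diameter) is indispensable. A secondary point — worth recording as a remark — is that in \eqref{P-t-P-L-O} one must notice that $\alpha<1$ automatically forces $\rho<0$, so that the slowly varying part of the tail of $Z$ is bounded and the exponent $\alpha$ itself, rather than merely some $\beta<\alpha$ as in Proposition~\ref{prop::useful}, is reached.
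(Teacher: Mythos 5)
Your proof is correct, and for the part the paper actually argues in detail -- the bound \eqref{P-t-P-L-O} -- you follow essentially the same route: bound $\ell(A_1)$ by $2|Z|+C'$ via the first inequality of \eqref{sigma-A-1-x}, note that $\alpha<1$ forces $\rho<0$ so the limit $c$ in \eqref{c} exists, deduce the tail bound $O(u^{-\alpha})$, and integrate by the layer-cake formula to get $\epsilon(t)=O(|t|^{\alpha})$, then absorb the $O(|t|)$ term. The only divergence is in the two almost sure inequalities of \eqref{sigma-A-1-x}: the paper simply cites \cite[Lemma 5.1]{mei2024support} for the first and \cite[Lemma 10.7]{hennion1997limit} for the second, whereas you supply self-contained arguments -- the elementary column-sum comparison $K^{-1}\|A_1\|\le\iota(A_1)\le|A_1x|\le\|A_1\|$ giving $C=\log K$, and the Birkhoff projective-metric contraction combined with the uniform diameter bound $\Delta(A_1)\le 2\log K$ under Condition~\ref{cond::Furstenberg-Kesten}, transferred to $\mathbf d=\tanh(d_H/2)$ by a case split (you even note the option of quoting Hennion, which is what the paper does). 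Both of your substitutes are sound; what they buy is independence from the external references at the cost of a slightly longer write-up, and your closing observation that $\alpha<1$ automatically yields $\rho<0$ (hence a genuinely $O(|t|^{\alpha})$ bound rather than the $O(|t|^{\beta})$, $\beta<\alpha$, of Proposition~\ref{prop::useful}) is exactly the point of the lemma.
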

\begin{proof}
	The first  inequality  in~\eqref{sigma-A-1-x} follows from~\cite[Lemma 5.1] {mei2024support}, while the second is a consequence of  Condition~\ref{cond::Furstenberg-Kesten} and \cite[Lemma 10.7]{hennion1997limit}. 
	
	When $\alpha < 1$, we have $\rho < \alpha - 1 < 0$, so  Condition \ref{cond::stable_1} implies that the limit $c$ defined in~\eqref{c} exists with $c>0$ (see \cite[Proposition 1]{de1999exact}). 
	Using the first inequality in~\eqref{sigma-A-1-x}, we know that the function $\epsilon(t)$ in Part 1 of Proposition~\ref{prop::useful} satisfies $\epsilon(t) \leq \mathbb{E}[\min(|t|(2|Z|+2C), 2)]$ for $t \in \mathbb{R}$. Since
	\begin{equation}
		\mathbb{P}[|Z| > s] \leq (1-F(s)+F(-s)) \leq \frac{2c}{s^{\alpha}}
	\end{equation}
	for $s > 0 $ large enough by~\eqref{c}, a simple computation shows that $\epsilon(t) = O(|t|^\alpha)$, hence we have $\mathbb\| P_t - P \|_\mathcal{L} = O(\epsilon(t) + |t|) = O(|t|^\alpha)$ as $t \to 0$.
\end{proof}

%	{\color{blue} 
	The second lemma gives an equivalent version  of the condition  \eqref{lim-t-frac-c-1-F-tx} when $\rho \in (-1, 0)$. 
	\begin{lemma}\label{lem::equivalence}
		Assume Condition~\ref{cond::Furstenberg-Kesten}, $\alpha \in (0, 2)$, $p \in [0, 1]$, $q \in \mathbb{R}$, $\rho \in (-1, 0)$, and $A: \mathbb{R}_+ \to \mathbb{R}$ is a measurable function such that $\lim_{t\to+\infty}A(t) = 0$ and that does not change sign for $t > 0$ large enough. Then, \eqref{lim-t-frac-c-1-F-tx} holds if and only if it holds when $F$ is replaced by the distribution function of $\log \|A_1\|$.
	\end{lemma}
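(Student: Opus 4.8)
The plan is to exploit the key deterministic comparison in the first inequality of~\eqref{sigma-A-1-x} of Lemma~\ref{lem::bound}: under Condition~\ref{cond::Furstenberg-Kesten} there is a constant $C>0$ such that $|\sigma(A_1,x)-\log\|A_1\||\leq C$ a.s.\ for every $x\in\mathbb{S}_+^{d-1}$. Applying this to $x=X_0$ (independent of $A_1$, with law $\nu$) gives $|Z-\log\|A_1\||\leq C$ a.s., where $Z=\log|A_1X_0|$. Hence if $F$ is the distribution function of $Z$ and $G$ that of $\log\|A_1\|$, then $F(t-C)\leq G(t)\leq F(t+C)$ for all $t\in\mathbb{R}$, and symmetrically $G(t-C)\leq F(t)\leq G(t+C)$. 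The whole lemma reduces to showing that such a bounded shift of the distribution function preserves the second–order regular variation relation~\eqref{lim-t-frac-c-1-F-tx} with the \emph{same} parameters $\alpha,\rho,p,q$ and the \emph{same} (up to asymptotic equivalence) auxiliary function $A$, provided $\rho\in(-1,0)$.

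First I would record the first-order consequence: \eqref{lim-t-frac-c-1-F-tx} for $F$ forces $1-F(t)+F(-t)\sim c\,t^{-\alpha}$ (with $c>0$ since $\rho<0$, by~\cite[Proposition 1]{de1999exact}), and likewise, using $F(t\pm C)$ sandwiching $G(t)$, one gets $1-G(t)+G(-t)\sim c\,t^{-\alpha}$ as well; moreover $\frac{1-G(t)}{1-G(t)+G(-t)}\to p$. Next, for the second-order part, write the numerator for $G$ as
\begin{equation}
	\frac{1-G(tx)+G(-tx)}{1-G(t)+G(-t)}-x^{-\alpha}
	= \frac{1-G(tx)+G(-tx)}{1-G(t)+G(-t)}-\frac{1-F(tx)+F(-tx)}{1-F(t)+F(-t)}
	+\Big(\frac{1-F(tx)+F(-tx)}{1-F(t)+F(-t)}-x^{-\alpha}\Big).
\end{equation}
The second bracket, divided by $A(t)$, converges to $x^{-\alpha}\frac{x^\rho-1}{\rho}$ by hypothesis. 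For the first bracket I would estimate $|1-G(s)+G(-s)-(1-F(s)+F(-s))|$ via the shift: $1-G(s)+G(-s)$ lies between $1-F(s-C)+F(-(s-C))$ and $1-F(s+C)+F(-(s+C))$, so the difference is controlled by $\big|(1-F(s-C)+F(-s+C))-(1-F(s)+F(-s))\big|$, which by the mean-value theorem equals $C$ times a tail-density-type quantity. The heart of the matter is that this increment is $o\big(A(t)\,t^{-\alpha}\big)$ uniformly for $s\asymp t$: this is where $\rho>-1$ is used. Indeed, $t^{-\alpha}$ is regularly varying of index $-\alpha$, $A(t)$ of index $\rho$, so $A(t)t^{-\alpha}$ is regularly varying of index $\rho-\alpha>-1-\alpha$, and the second-order theory (de Haan–Ferreira~\cite[Theorem B.2.1]{haan06extreme}, as invoked after Condition~\ref{cond::stable_1}) shows the tail $1-F(s)+F(-s)$ admits a density-like derivative of order $s^{-\alpha-1}$; a bounded horizontal shift changes it only by $O(s^{-\alpha-1})=o(A(t)t^{-\alpha})$ precisely because $-\alpha-1<\rho-\alpha$. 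Dividing by $A(t)$ and using $1-F(t)+F(-t)\sim c t^{-\alpha}$ then kills the first bracket, giving the claimed limit for $G$. The argument is symmetric, so the converse implication (from $G$ to $F$) follows verbatim. The analogous treatment of the second relation in~\eqref{lim-t-frac-c-1-F-tx}, with $\frac{1-G(t)}{1-G(t)+G(-t)}$, is handled the same way, since $1-G(t)$ and $1-F(t)$ differ, after the shift, by $o(A(t)t^{-\alpha})$ as well.

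The main obstacle is making the ``bounded shift is a second-order negligible perturbation'' step rigorous and uniform in $x$ on compacts of $(0,\infty)$: one cannot simply differentiate $F$, so I would instead work with the monotone sandwich $F(s-C)\le G(s)\le F(s+C)$ and with the second-order expansion in the form given by de Haan's theory, namely that $\frac{1-F(sx)+F(-sx)}{1-F(s)+F(-s)}=x^{-\alpha}\big(1+A(s)\frac{x^\rho-1}{\rho}+o(A(s))\big)$ locally uniformly in $x$, applied at the two shifted arguments $s=t\pm C$ (equivalently $x\leadsto x(1\pm C/t)$), and then use $A(t\pm C)/A(t)\to1$ and $(1\pm C/t)^\rho\to1$ to see that the shifted expansions agree with the unshifted one up to $o(A(t))$. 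This uniform convergence and the interchange with the ratio $\big(1-F(t)+F(-t)\big)/\big(1-G(t)+G(-t)\big)\to1$ is the only delicate bookkeeping; the constraint $\rho>-1$ enters exactly to guarantee the shift-error $s^{-\alpha-1}$ is dominated by $A(s)s^{-\alpha}$, which is why the lemma is stated for $\rho\in(-1,0)$.
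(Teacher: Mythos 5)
Your proposal rests on exactly the same two pillars as the paper's proof: the sandwich $F(t-C)\le F_1(t)\le F(t+C)$ for the distribution function $F_1$ of $\log\|A_1\|$, coming from the first inequality in \eqref{sigma-A-1-x} of Lemma~\ref{lem::bound}, and the fact that the resulting perturbations are $O(1/t)=o(A(t))$ precisely because $|A|$ is regularly varying of index $\rho>-1$. The execution differs in the key technical step. The paper passes to $g(t)=t^{\alpha}(1-F(t)+F(-t))$ and invokes \cite[Theorem B.2.2]{haan06extreme} to convert \eqref{lim-t-frac-c-1-F-tx} into the one-variable rate statement $\lim_{t\to\infty}\frac{c-g(t)}{cA(t)}=-\frac{1}{\rho}$; in that form the shift is absorbed using only $A(t\pm C)/A(t)\to 1$ (Karamata's uniform convergence theorem for the first-order regularly varying function $|A|$) together with $(1\pm C/t)^{-\alpha}-1=O(1/t)=o(A(t))$, and the two-variable relation for $F_1$ is then reconstructed from the identity $\frac{g_1(tx)-g_1(t)}{cA(t)}=\frac{c-g_1(t)}{cA(t)}-\frac{c-g_1(tx)}{cA(tx)}\frac{A(tx)}{A(t)}$. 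You instead stay with the two-variable ratio form and evaluate the second-order expansion at the moving arguments $x(1\pm C/t)$; this works, but it genuinely requires the \emph{locally uniform} version of the second-order convergence (Drees-type uniform inequalities, or the uniformity theorems in \cite{haan06extreme}), which you assert rather than establish --- fixed-$x$ convergence combined with monotonicity alone only bounds the shift error by a constant multiple of $1-x_0^{-\alpha}$ for fixed $x_0>1$, which is not $o(A(t))$ since $A(t)\to 0$. The paper's detour through Theorem B.2.2 is precisely what avoids needing that second-order uniformity.

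One step in your middle paragraph is incorrect as written: the appeal to the mean value theorem and a ``tail-density-type quantity'' of order $s^{-\alpha-1}$. The law of $Z$ need not have a density, and even the increment $F(s)-F(s-C)$ of a distribution with regularly varying tail can be of order $s^{-\alpha}$ (e.g.\ in the presence of atoms at geometrically spaced points), not $O(s^{-\alpha-1})$; likewise \cite[Theorem B.2.1]{haan06extreme} only gives regular variation of $|A|$, not a derivative bound on the tail. Your final paragraph effectively discards this heuristic and replaces it by the shifted-expansion argument, which is the correct route; with the local uniformity input made explicit (or by rerouting through the paper's $g$-function argument), your proof of both limits in \eqref{lim-t-frac-c-1-F-tx} for $F_1$, and of the converse direction by symmetry, is complete.
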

	\begin{proof}
		We only prove the necessity, because the proof for the sufficiency is similar. Suppose that~\eqref{lim-t-frac-c-1-F-tx} holds. Let $g(t) = t^\alpha (1-F(t)+F(-t)), t > 0$. Since $\rho < 0$, we know that the limit $c = \lim_{t \to +\infty} g(t)$ defined in~\eqref{c} exists with $c > 0$. From~\eqref{lim-t-frac-c-1-F-tx}, we get
		\begin{equation}
			\lim_{t \to +\infty}\frac{g(tx)-g(t)}{cA(t)} = \lim_{t \to +\infty}\frac{g(tx)-g(t)}{g(t)A(t)}=\frac{x^\rho-1}{\rho}, \quad \forall x > 0.
		\end{equation}
		By~\cite[Theorem B.2.2]{haan06extreme}, we know that 
		\begin{equation}\label{lim-t-c-g-t}
			\lim_{t \to +\infty} \frac{c-g(t)}{cA(t)}=\frac{-1}{\rho}.
		\end{equation}
		
		Let $F_1(x) = \mathbb{P}[\log \|A_1\| \leq x]$, $x \in \mathbb{R}$, be the distribution function of $\log \|A_1\|$, and 
		$$ g_1(t)=t^\alpha(1-F_1(t)+F_1(-t)),  \quad  t > 0.$$
		From Lemma~\ref{lem::bound}, there exists $C > 0$ such that $F(t-C) \leq F_1(t) \leq F(t+C)$ for all $t \in \mathbb{R}$. 
		% By the monotonicity of the distribution functions, we know that 
		Thus for all $t > C$, %  is large enough, 
		\begin{equation}\label{1+C-t}
			\Big(1+\frac{C}{t}\Big)^{-\alpha} g(t+C) \leq g_1(t) \leq \Big(1-\frac{C}{t}\Big)^{-\alpha} g(t-C). 
		\end{equation}
		Notice that $|A|$ is $\rho$-regularly varying (see \cite[Theorem B.2.1 and Remark B.3.15]{haan06extreme}). Since $\rho > -1$, we know that $(1\pm\frac{C}{t})^{-\alpha}-1=O(\frac{1}{t})=o(A(t))$ as $t \to +\infty$. Thus, from~\eqref{lim-t-c-g-t} and \eqref{1+C-t}, we get
		\begin{equation}
			\lim_{t \to +\infty} \frac{c-g_1(t)}{cA(t)}=\frac{-1}{\rho}.
		\end{equation}
		This implies that 
		\begin{equation}
			\lim_{t \to +\infty} \frac{g_1(tx)-g_1(t)}{cA(t)} = \lim_{t \to \infty} \Big(\frac{c-g_1(t)}{cA(t)} - \frac{c-g_1(tx)}{cA(tx)} \frac{A(tx)}{A(t)}\Big) = \frac{x^\rho - 1}{\rho}, \quad \forall x > 0,
		\end{equation}
		hence the first assertion in~\eqref{lim-t-frac-c-1-F-tx} holds when $F$ is replaced by $F_1$. %  the distribution function of $\log \|A_1\|$. 
		The proof for the second assertion in~\eqref{lim-t-frac-c-1-F-tx} with $F$ replaced by $F_1$ is similar. 
	\end{proof}
	%	}

The third lemma is a technical result 
stated without proof in~\cite[Proof of Theorem 1]{de1999exact}. 
As it plays an important  role in our analysis, we provide here a sketch of proof  for completeness.
\begin{lemma}[{\cite{de1999exact}}]\label{lem:esseen-estimate}
	Assume Conditions~\ref{cond::Furstenberg-Kesten} and \ref{cond::stable_1}. Let $A_\rho$ and $B_\rho$ be defined as in~\eqref{A-rho} and \eqref{B-rho}. 
	\begin{enumerate}[{1.}]
		\item If $\rho > -\alpha$, then for any $\eta \in (0, \frac{1}{\alpha - \rho})$,
		we have, with $l_n = A(a_n)$ and 
		%			$m_n = |l_n|^{\frac{-1}{\alpha - \rho} + \eta}$, 
		$m_n = |l_n|^{-\eta}$, 
		\begin{equation}\label{lim-n-1-l-n}
			\lim_{n \to \infty} \frac{1}{l_n} \int_{|t| \leq m_n} \frac{1}{|t|} \bigg|e^{-itb_n}\phi_Z\big(\frac{t}{a_n}\big)^n-h_\alpha(t) + l_n h_\alpha(t) (A_\rho(t) + iB_\rho(t)) \bigg| dt = 0.
		\end{equation}
		
		\item If $\rho < -\alpha$, then for any $\eta \in (0, \frac{1}{2\alpha})$, we have,  with $m_n = n^{\eta}$, 
		\begin{equation}\label{lim-n-t-leq-m-n}
			\lim_{n \to \infty} n \int_{|t| \leq m_n} \frac{1}{|t|} \bigg|e^{-itb_n}\phi_Z\big(\frac{t}{a_n}\big)^n-h_\alpha(t) + n^{-1} h_\alpha(t) \frac{(\log h_\alpha(t))^2}{2} \bigg|dt = 0.
		\end{equation}
	\end{enumerate}
\end{lemma}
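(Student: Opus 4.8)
The plan is to reduce the two estimates to the one-dimensional theory of de Haan and Peng and to fill in the details suppressed in the proof of \cite[Theorem 1]{de1999exact}; the matrices play no role here, the statement concerning only the real variable $Z$ and its characteristic function $\phi_Z$. With $a_n,b_n,h_\alpha$ as in \eqref{a_n}, \eqref{b_n}, \eqref{def-h-alpha}, set $\Psi_n(t):=n\log\phi_Z\big(\tfrac{t}{a_n}\big)-itb_n$, so that $e^{-itb_n}\phi_Z\big(\tfrac{t}{a_n}\big)^n=e^{\Psi_n(t)}$ and, by \cite[Propositions 1 and 2]{de1999exact}, $\Psi_n(t)\to\log h_\alpha(t)$ for each fixed $t$. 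The real work is to upgrade this into a \emph{second-order} expansion that is uniform over the growing interval $\{|t|\le m_n\}$: feeding the second-order regular variation hypothesis \eqref{lim-t-frac-c-1-F-tx} (and its companion $p$-limit) into the expansion of $\phi_Z$ near $0$, and combining it with Potter bounds and the uniform convergence theorem for second-order regularly varying functions (de Haan and Ferreira, \cite{haan06extreme}), one obtains, when $\rho>-\alpha$,
\begin{equation*}
	\Psi_n(t)=\log h_\alpha(t)-l_n\big(A_\rho(t)+iB_\rho(t)\big)+r_n(t),\qquad l_n:=A(a_n),
\end{equation*}
where $r_n(t)=o(l_n)$ uniformly on $\{|t|\le m_n\}$ and $r_n(t)=O(l_n|t|^{\alpha-\rho})$ near $0$; when $\rho<-\alpha$ (which forces $\alpha<1$ because $\alpha-2<\rho$, hence $b_n\to0$ by dominated convergence in the first line of \eqref{b_n}) one gets instead $n\big(\phi_Z\big(\tfrac{t}{a_n}\big)-1\big)=\log h_\alpha(t)+itb_n+o(1)$, the error being $O(l_n)$ times a power of $|t|$ and hence negligible at the scale $n^{-1}$ since now $l_n=o(n^{-1})$. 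The explicit $A_\rho,B_\rho$ are exactly those produced by the integrals $d_a,z_a,c_a$ of \eqref{d-alpha}, and this algebraic identification — together with the precise form of the error $r_n$ — is the content of \cite{de1999exact}, which I would quote rather than re-derive.

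Suppose first $\rho>-\alpha$. Then $l_n=A(a_n)$ is regularly varying of index $\rho/\alpha\in(-1,0]$ as $n\to\infty$, so $l_n\gg n^{-1}$; and on $\{|t|\le m_n=|l_n|^{-\eta}\}$ with $\eta<\tfrac1{\alpha-\rho}$ one has $\sup_{|t|\le m_n}|l_n(A_\rho(t)+iB_\rho(t))|\le C|l_n|^{1-\eta(\alpha-\rho)}\to0$ (this is where the restriction on $\eta$ enters, and it also forces $m_n\ll a_n$). Hence the exponent $-l_n(A_\rho(t)+iB_\rho(t))+r_n(t)$ of $e^{\Psi_n(t)}/h_\alpha(t)$ goes to $0$ uniformly, so $\big|e^{-itb_n}\phi_Z(\tfrac{t}{a_n})^n\big|\le C|h_\alpha(t)|$ (one may also invoke $|\phi_Z(\tfrac{t}{a_n})|^n\le K(t)$ from Lemma~\ref{lem::decay_1}), and $e^w-1=w+O(w^2)$ gives, with $\xi_n:=\Psi_n-\log h_\alpha$,
\begin{equation*}
	e^{-itb_n}\phi_Z\big(\tfrac{t}{a_n}\big)^n-h_\alpha(t)+l_nh_\alpha(t)\big(A_\rho(t)+iB_\rho(t)\big)=h_\alpha(t)\big(r_n(t)+O(\xi_n(t)^2)\big).
\end{equation*}
Dividing by $l_n|t|$, the right-hand side tends to $0$ pointwise on $\mathbb R\setminus\{0\}$; and, since $r_n(t)=O(l_n|t|^{\alpha-\rho})$ and $\xi_n(t)^2=O(l_n^2|t|^{2(\alpha-\rho)}+l_n^2|t|^{2\alpha})$ near $0$ while $|h_\alpha(t)|\le e^{-c|t|^\alpha}$, it is dominated, uniformly in $n$, by a fixed $L^1(\mathbb R)$ function that vanishes at $t=0$. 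Dominated convergence then yields \eqref{lim-n-1-l-n}.

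Suppose now $\rho<-\alpha$. The regular-variation correction is now negligible at scale $n^{-1}$, and the $n^{-1}$-term comes from the quadratic term of $\log(1+u)=u-\tfrac{u^2}2+\cdots$: with $u_n(t):=\phi_Z\big(\tfrac{t}{a_n}\big)-1$ and $n\log\phi_Z\big(\tfrac{t}{a_n}\big)=nu_n-\tfrac n2 u_n^2+O(n|u_n|^3)$, the relation $nu_n(t)=\log h_\alpha(t)+itb_n+o(1)$ together with $b_n\to0$ gives $\tfrac n2 u_n(t)^2=\tfrac{(\log h_\alpha(t))^2}{2n}+o(n^{-1})\big(1+|\log h_\alpha(t)|\big)^2$ and $n|u_n(t)|^3=O\big(n^{-2}(1+|\log h_\alpha(t)|)^3\big)$, so that after exponentiating
\begin{equation*}
	e^{-itb_n}\phi_Z\big(\tfrac{t}{a_n}\big)^n-h_\alpha(t)+n^{-1}h_\alpha(t)\frac{(\log h_\alpha(t))^2}{2}=h_\alpha(t)\,o(n^{-1})\,\big(1+|\log h_\alpha(t)|\big)^3 .
\end{equation*}
Multiplying by $n/|t|$, integrating over $\{|t|\le m_n=n^\eta\}$ and dominating by the $L^1$ function $t\mapsto\tfrac{C}{|t|}|h_\alpha(t)|(1+|\log h_\alpha(t)|)^3$ (integrable because $(\log h_\alpha(t))^2$ vanishes at $0$ and $|h_\alpha(t)|\le e^{-c|t|^\alpha}$) gives \eqref{lim-n-t-leq-m-n}; here the bound $\eta<\tfrac1{2\alpha}$ serves, with room to spare, only to keep $m_n\ll a_n$, so that the above expansions and the bound of Lemma~\ref{lem::decay_1} stay valid on the whole range of integration.

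The one genuinely delicate point is thus the uniform second-order expansion of $\Psi_n$ (respectively of $nu_n$) of the first paragraph; once that is in hand, what remains is the elementary exponentiation and dominated-convergence bookkeeping displayed above.
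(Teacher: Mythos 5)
Your proposal is correct in substance and follows essentially the same route as the paper: both arguments outsource the hard, uniform second-order estimates on $e^{-itb_n}\phi_Z(t/a_n)^n$ to de Haan--Peng (their Lemmas 4--6), then finish by Taylor-expanding the exponential ($e^w-1-w=O(|w|^2)$, valid because the exponent is uniformly small on $|t|\le m_n$ thanks to the restriction on $\eta$) and controlling the resulting integral; the paper does this only for the representative case $1<\alpha<2$, $\rho\in(\alpha-2,0)$, via the explicit sandwich $d_1\le\cdot\le d_2$ of \cite[Lemma 4(i)]{de1999exact} and the iterated limits $n\to\infty$ then $\varepsilon\to0$, whereas you phrase the same bookkeeping as a dominated-convergence argument. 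Two mis-statements are worth flagging, though neither is fatal to your argument as actually run: the remainder $r_n$ is \emph{not} $o(l_n)$ uniformly on $\{|t|\le m_n\}$ (at $|t|\approx m_n$ the dHP bounds only give $|r_n(t)|\le C|l_n|\,|t|^{\alpha-\rho\pm\varepsilon}$, and $A_\rho(m_n)\to\infty$); what is true, and what your domination step really uses, is pointwise $o(l_n)$ together with this polynomial-growth bound uniform over the whole range, which the rapidly decaying $|h_\alpha|$ then absorbs. Likewise, in the case $\rho<-\alpha$ the constraint $\eta<\frac{1}{2\alpha}$ is not merely to keep $m_n\ll a_n$: it is exactly what makes $\sup_{|t|\le m_n} n^{-1}|\log h_\alpha(t)|^2 \sim n^{2\eta\alpha-1}\to0$, i.e.\ the uniform smallness of the exponent needed for your quadratic expansion, parallel to the role of $\eta<\frac{1}{\alpha-\rho}$ that you correctly identified in the first case.
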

\begin{proof}
	We only sketch the proof for the case where $1 < \alpha < 2$ and $\rho \in (\alpha-2, 0)$ (so that $\rho > \alpha - 2 > -\alpha$); the other cases can be proved similarly by using~\cite[Lemmas 4, 5 and 6]{de1999exact}. 
	For simplicity, denote $l = l_n = A(a_n)$, $a = A_\rho(t)$, $b = B_\rho(t)$, $f_1 = e^{-itb_n}\phi_Z(\frac{t}{a_n})^n$, and $f_2 = h_\alpha(t)$. Let $\varepsilon > 0$ be small enough, and write 
	$$d_1 = d_1(t, \varepsilon) = (1-\varepsilon)\min(|t|^\varepsilon, |t|^{-\varepsilon}),  \quad d_2 = d_2(t, \varepsilon) = (1 + \varepsilon)\max(|t|^\varepsilon, |t|^{-\varepsilon}).$$ 
	Our argument is based on 
	~\cite[Lemma 4 (i)]{de1999exact}, whose  assertion  depends on the sign of 
	$2p-1+2qp$.  
	For simplicity, we only consider the case where  $2p-1+2qp \geq 0$;  the opposite  case can be treated similarly.
	In this case,  \cite[Lemma 4 (i)]{de1999exact} implies that  there exists $N_0 > 0$ such that for  all $n \geq N_0$ and $t \in [-n^{\frac{1}{\alpha}}/N_0, n^{\frac{1}{\alpha}}/N_0]$,  
	\begin{equation}
		d_1\leq \mathrm{Re}((\log f_2 - \log f_1)/(la)) \leq d_2, \quad d_1 \leq \mathrm{Im}((\log f_2 - \log f_1)/(lb)) \leq d_2.
	\end{equation}
	Since $d_1 \leq 1 \leq d_2$, we see that $|\mathrm{Re}(\log f_2 - \log f_1) - la| \leq |l|a \max(d_2 - 1, 1 - d_1) \leq |l|a(d_2-d_1)$;  similarly $|\mathrm{Im}(\log f_2 - \log f_1) - lb| \leq |lb|(d_2 - d_1)$. Therefore, when $n$ is large enough, for all  $t \in [-m_n, m_n]$,
	\begin{equation}\label{log-f-2-log-f-1-l}
		|\log f_2 - \log f_1 - l(a + ib)| \leq |l|(a+|b|)(d_2 - d_1).
	\end{equation}
	Recall that $a = \frac{c}{\rho}d_{\alpha-\rho}|t|^{\alpha - \rho}$ and $ b = \mathrm{sgn}(t)\big( \frac{2p-1}{\rho} + 2q \big) \frac{cd_{\alpha - \rho - 1}}{\alpha - \rho - 1} |t|^{\alpha - \rho}$. From~\eqref{log-f-2-log-f-1-l}, we see that there is a constant 
	$C_0>0$ such that for $n$ large enough, 
	\begin{equation}\label{max-t-leq-m-n-log}
		\max_{|t| \leq m_n}|\log f_1 - \log f_2| \leq \max_{|t| \leq m_n} |l|(a+|b|)(d_2-d_1+1) \leq C_0 |l|m_n^{\alpha-\rho + \varepsilon} 
		%			= C_0 |l|^{\eta(\alpha-\rho) +\varepsilon(\frac{-1}{\alpha-\rho} + \eta)}. 
		= C_0 |l|^{1-\eta(\alpha-\rho) - \eta \varepsilon}. 
	\end{equation}
	Since $\eta \in (0, \frac{1}{\alpha - \rho})$, we have $1 -\eta (\alpha - \rho) > 0$. Taking $\varepsilon >0$  small enough, we get 
	\begin{equation}\label{lim-n-t-t-t}
		\lim_{n\to\infty} \max_{|t| \leq m_n}|\log f_1 - \log f_2| = 0.
	\end{equation}  
	%		Using the Taylor expansion $f_1 - f_2 = f_2(\log f_1 - \log f_2)e^{\theta (\log f_1 - \log f_2)}$ with $\theta \in [0, 1]$, from~\eqref{max-t-leq-m-n-log} we get that for some constant $C > 0$ and all $n $ large enough,  
	Using the Taylor expansion $e^x - 1 - x = O(|x|^2)$ with $x = \log f_1 - \log f_2$, from~\eqref{lim-n-t-t-t} we get that for some constant $C > 0$ and all $n $ large enough,  
	\begin{equation}\label{f-1-f-2-f-2-log}
		|f_1 - f_2 - f_2(\log f_1 - \log f_2)| \leq C|f_2||\log f_1 - \log f_2|^2 \quad \mbox{ if } |t| \leq m_n. 
	\end{equation}
	Combining~\eqref{log-f-2-log-f-1-l}, \eqref{max-t-leq-m-n-log} and \eqref{f-1-f-2-f-2-log}, we have that for all $n$ large enough and $t \in [-m_n, m_n]$,
	\begin{align}
		|f_1 - f_2 + lf_2(a+ib)| & \leq 
		|f_1 - f_2 - f_2(\log f_1 - \log f_2)| + |f_2||\log f_1 - \log f_2 + l(a + ib)| \notag \\
		& \leq C|f_2||\log f_1 - \log f_2|^2 + |f_2||l|(a+|b|)(d_2 - d_1) \notag \\
		& \leq C  |f_2||l|^2(a+|b|)^2(d_2-d_1 + 1)^2 + |f_2||l|(a+|b|)(d_2-d_1).
	\end{align}
	Thus, the integral in~\eqref{lim-n-1-l-n} is bounded by $o(|l|) + C|l| \int_{\mathbb{R}} |f_2|(d_2-d_1) (a+|b|)/|t| dt$. 
	Passing to the limit as $n\to \infty $  and then as  $\varepsilon \to 0$, and using the fact that 
	$\lim_{\varepsilon \to 0} (d_2-d_1) = 0$ for all $t \in \mathbb{R}$, we get ~\eqref{lim-n-1-l-n}. %,  for the case $1 < \alpha < 2$. 
\end{proof}

%	Throughout this section, we use the following notation. If $\rho > -\alpha$, then $\eta = \frac{1}{2(\alpha-\rho)}, l_n = A(a_n), m_n = l_n^{\frac{-1}{\alpha-\rho}+\eta}, J(t)=A_\rho(t)+iB_\rho(t)$; if $\rho < -\alpha$, then $\eta = \frac{1}{4\alpha}, l_n = n^{-1}, m_n = n^{\frac{1}{2\alpha} - \eta}, J(t) = \frac{(\log h_\alpha(t))^2}{2}$. Write $M(s) = \frac{1}{2\pi} \int_\mathbb{R} \frac{e^{-its}}{it}h_\alpha(t)J(t)dt$. Notice that $\lim_{s \to \infty} M(s) = 0$ by the Riemann-Lebesgue Lemma and $\int_\mathbb{R} e^{its}dM(s) = -h_\alpha(t)J(t)$. Recall that $A$ is regularly varying with index $\rho \leq 0$. 

%	\subsection{Case $\rho > -\alpha$} 

The fourth lemma 
is a version of Esseen's   smoothing inequality. 
%  about the estimation of the difference of two functions of bounded variation. 
The difference with the usual version is that  here we have the perturbation  term $G$ on the difference of two bounded non-decreasing functions $F_1$ and $F_2$. 

\begin{lemma}[Esseen-type inequality]\label{lem::Esseen}
	Let $F_1, F_2 : \mathbb{R} \to \mathbb{R}$ be two bounded non-decreasing functions such that $\lim_{x \to \pm \infty} (F_1(x) - F_2(x)) = 0$, $f_i(t) := \int_\mathbb{R} e^{itx}dF_i(x)$, $t \in \mathbb{R}$, $i = 1, 2$. Define for $x \in \mathbb{R}$, $G(x) = \frac{1}{2\pi} \int_\mathbb{R} e^{-itx} \frac{g(t)}{-it} dt$, where $g : \mathbb{R} \to \mathbb{C}$   is measurable   such that  its complex conjugate  $\overline g $ satisfies $ \overline g(t) = g(-t) $ for all $t \in \mathbb R \setminus \{0\}$, and that  
	both $g$ and  $t \mapsto \frac{g(t)}{t}$ are in $ L^1(\mathbb{R})$. If $F_2$ is differentiable on $\mathbb{R}$ whose derivative satisfies $\|F_2'\|_\infty < \infty$, and $t \mapsto \frac{f_1(t) - f_2(t) - g(t)}{t}$ is in $L^\infty ([a,b])$ 
	(the space of essentially bounded functions on $[a,b]$)
	for all $a, b \in \mathbb R$ with $a<b$, then for any $T > 0$, 
	\begin{equation}\label{sup-x-R-F-1-F-2-G-1}
		\sup_{x \in \mathbb{R}}|F_1(x) - F_2(x) - G(x)| \leq \frac{24\|F_2' + G'\|_\infty}{\pi T} + \frac{1}{\pi} \int_{-T}^T \bigg| \frac{f_1(t) - f_2(t) - g(t)}{t} \bigg|dt.
	\end{equation}
\end{lemma}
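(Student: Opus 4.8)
The plan is to reduce \eqref{sup-x-R-F-1-F-2-G-1} to the classical Esseen smoothing inequality, applied not to the pair $(F_1,F_2)$ but to $F_1$ against the \emph{perturbed} approximant $H:=F_2+G$, so that $F_1-H=F_1-F_2-G$ is exactly the quantity to be estimated and $f_1-(f_2+g)=f_1-f_2-g$ is exactly the transform appearing on the right. The first task is therefore to check that $H$ is an admissible approximant and to identify its Fourier--Stieltjes transform. Using the symmetry hypothesis $\overline g(t)=g(-t)$ together with the change of variable $t\mapsto-t$, one sees that $G$ is real-valued; since $t\mapsto g(t)/(-it)\in L^1(\mathbb R)$, the Riemann--Lebesgue lemma shows that $G$ is bounded, continuous, and $G(x)\to0$ as $x\to\pm\infty$; and since $g\in L^1(\mathbb R)$, dominated convergence applied to the difference quotients of $G$ (the factor $\frac{e^{-ith}-1}{-ith}$ being bounded by $1$) shows that $G$ is $C^1$ with $G'(x)=\frac1{2\pi}\int_{\mathbb R}e^{-itx}g(t)\,dt$ and $\|G'\|_\infty\le\frac1{2\pi}\int_{\mathbb R}|g(t)|\,dt<\infty$. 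Consequently $G$ is absolutely continuous with $G(x)=\int_{-\infty}^xG'(u)\,du$, and the Fourier--Stieltjes transform of the measure $G'(x)\,dx$ is $g$ by Fourier inversion. Hence $H$ is differentiable with $\|H'\|_\infty=\|F_2'+G'\|_\infty<\infty$, it shares the limits of $F_2$ at $\pm\infty$, one has $F_1-H\to0$ at $\pm\infty$ (because $F_1-F_2\to0$ and $G\to0$), and the Fourier--Stieltjes transform of $dH=dF_2+G'(x)\,dx$ is $f_2+g$.

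After subtracting the common value $F_1(-\infty)=F_2(-\infty)$ and dividing by the common value $F_1(+\infty)=F_2(+\infty)$ (the case where this value is $0$ being trivial, since then $F_1\equiv F_2\equiv0$), we may assume that $F_1$ is a probability distribution function and $H(-\infty)=0$, $H(+\infty)=1$; all three terms in \eqref{sup-x-R-F-1-F-2-G-1} are homogeneous of the same degree under this rescaling, so the normalization is harmless. Esseen's smoothing inequality, in the form in which the approximant is allowed to be any function with bounded derivative and the required behaviour at $\pm\infty$ (not necessarily monotone), applied to the non-decreasing $F_1$ and to $H$, whose transform is $f_2+g$, then yields for every $T>0$
\begin{equation*}
	\sup_{x\in\mathbb R}\,|F_1(x)-H(x)|\;\le\;\frac{24\,\|H'\|_\infty}{\pi T}+\frac1\pi\int_{-T}^T\bigg|\frac{f_1(t)-f_2(t)-g(t)}{t}\bigg|\,dt,
\end{equation*}
which is exactly \eqref{sup-x-R-F-1-F-2-G-1}; the assumed local essential boundedness of $t\mapsto\frac{f_1(t)-f_2(t)-g(t)}{t}$ ensures that the integral on the right is finite for every $T$. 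For completeness (and to cover the case where $dH$ is not a finite signed measure) one re-runs the standard proof with $H$ in place of the usual smooth approximant: writing $\Delta:=F_1-H$ and $\eta:=\sup_x|\Delta(x)|$, one picks a point at which $|\Delta|$ is nearly maximal, uses the monotonicity of $F_1$ together with $\|H'\|_\infty<\infty$ to exhibit an interval of length comparable to $\eta/\|H'\|_\infty$ on which $\Delta$ keeps a fixed sign with $|\Delta|\ge\eta/2$, convolves $\Delta$ with a smoothing kernel $K_T$ whose Fourier transform is supported in $[-T,T]$ and equals $1$ near $0$, bounds $(\Delta*K_T)(s)$ from below by $c\,\eta-C\,\|H'\|_\infty/T$ (using the kernel tail) and from above, via Fourier inversion applied to $\Delta*K_T$ and the identity that the Fourier--Stieltjes transform of $d(F_1-F_2)-G'(x)\,dx$ equals $f_1-f_2-g$, by $\frac1{2\pi}\int_{-T}^T\big|\frac{f_1-f_2-g}{t}\big|\,|\widehat{K_T}(t)|\,dt$, and finally combines the two estimates.

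The step I expect to be the main obstacle is precisely this bookkeeping around $G$. Unlike in the usual Berry--Esseen setting, $G$ is not a distribution function: it vanishes at \emph{both} ends and its derivative need not be integrable, so $dH$ need not be a finite signed measure. One must therefore justify carefully that $dH$ nonetheless has Fourier--Stieltjes transform $f_2+g$, and that the conditionally convergent improper integrals defining $G$ and $G'$ interact correctly with the convolution against $K_T$ and with the Fourier inversion step; in particular this is what forces one to take $K_T$ with rapidly decreasing tails (for instance $\widehat{K_T}$ a smooth bump) rather than the Fej\'er kernel, whose tails are too heavy to be paired with a merely bounded $G'$. All the integrability needed for these manipulations is supplied by the two hypotheses $g\in L^1(\mathbb R)$ and $t\mapsto g(t)/t\in L^1(\mathbb R)$.
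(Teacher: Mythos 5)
Your overall route---apply a smoothing inequality to $F_1$ against $H=F_2+G$---is the same as the paper's, and your preliminary facts about $G$ (real-valued because $\overline g(t)=g(-t)$, $C^1$ with $\|G'\|_\infty\le\frac{1}{2\pi}\|g\|_{L^1}$, $G\to 0$ at $\pm\infty$ by Riemann--Lebesgue) match the paper. The gap is at the crucial step. Your first argument invokes Esseen's inequality with ``the transform of $H$ equal to $f_2+g$'', resting on the claim that the Fourier--Stieltjes transform of the measure $G'(x)\,dx$ is $g$ ``by Fourier inversion''; but $G'$ need not be integrable, so neither that transform nor the inversion step is justified---as you yourself concede. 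Your fallback is to re-run the smoothing proof with a kernel $K_T$ whose Fourier transform is a smooth bump, on the grounds that the Fej\'er kernel's tails are ``too heavy to be paired with a merely bounded $G'$''. That diagnosis is incorrect, and the fix costs you the statement: the constants $24/\pi$ and $1/\pi$ in \eqref{sup-x-R-F-1-F-2-G-1} come precisely from Feller's Lemma XVI.3.1 used with the Fej\'er-type kernel $v_T(x)=\frac{1-\cos(Tx)}{\pi T x^2}$; a generic bump kernel yields an inequality with unspecified constants $c,C$, i.e.\ a different lemma. Moreover, your sketched upper bound (Fourier inversion of $\Delta*K_T$ via ``the identity that the transform of $d(F_1-F_2)-G'(x)\,dx$ equals $f_1-f_2-g$'') re-introduces exactly the unjustified identity, and the hypothesis that $t\mapsto\frac{f_1(t)-f_2(t)-g(t)}{t}$ is in $L^\infty([a,b])$ never plays its real role in your argument (you only use it to say the right-hand integral is finite), and your claim that the degenerate normalization case is ``trivial'' overlooks that $G$ need not vanish there.

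What you are missing is that one never has to pair the kernel with $G'$ nor interpret $dG$ as a measure. In the paper, Feller's lemma is applied directly to the non-decreasing $F_1$ and to $F_2+G$ (it only requires the approximant to have a bounded derivative), which produces the term $\frac{24\|F_2'+G'\|_\infty}{\pi T}$ with the Fej\'er kernel; the smoothed difference is then computed exactly: $F_i*V_T$ is handled through Fourier inversion of $f_i w_T\in L^1$, while $G*V_T$ is computed by Fubini on the double integral of $G(x-y)v_T(y)$, whose integrand is dominated by $\big|\frac{g(t)}{t}\big|\,v_T(y)\in L^1(dt\,dy)$---this is exactly where the hypothesis $g/t\in L^1(\mathbb R)$ enters and why the Fej\'er kernel causes no trouble. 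This gives the increment identity \eqref{Delta} for $(F_1-F_2-G)*V_T$ between two points $a<x$; letting $a\to-\infty$, using $F_1-F_2\to0$, $G\to0$, and the Riemann--Lebesgue lemma applied to the $L^1$ function $t\mapsto\frac{f_1(t)-f_2(t)-g(t)}{t}\,w_T(t)$ (this is where the $L^\infty([a,b])$ assumption is genuinely needed, since $w_T$ is supported in $[-T,T]$), one obtains the pointwise formula for the smoothed difference and hence the term $\frac{1}{\pi}\int_{-T}^{T}\big|\frac{f_1-f_2-g}{t}\big|\,dt$ with the stated constant. Without this computation, or an equally careful substitute that keeps the Fej\'er kernel, your proposal does not prove \eqref{sup-x-R-F-1-F-2-G-1} as stated.
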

\begin{proof}
	Fix $T > 0$. Let $V_T$ be the probability distribution with density $v_T(x) = \frac{1 - \cos(Tx)}{\pi T x^2}, x \in \mathbb{R} \setminus \{0\}$, $v_T(0) =  \frac{T}{2 \pi}$, and $w_T(t) = \int_\mathbb{R} e^{itx} dV_T(x) = \max\{ 0, 1-\frac{|t|}{T} \}$, $t \in \mathbb{R}$. For a bounded function $K : \mathbb{R} \to \mathbb{R}$, we consider the convolution $K * V_T(x) = \int_\mathbb{R} K(x-y)v_T(y)dy$, $x \in \mathbb{R}$. 
	
	Since $g \in L^1(\mathbb{R})$, we have $G'(x) = \frac{1}{2\pi} \int_\mathbb{R} e^{-itx}g(t)dt$ $\forall x \in \mathbb{R}$, hence $\|G'\|_\infty < \infty$. 
	Notice that  $G$ is  real-valued since $\overline g(t) = g(-t) $. 
	Using~\cite[Lemma XVI.3.1]{Feller-1971-Vol-2} for the non-decreasing function $F_1$ and the function $F_2 + G$ with $\|F_2' + G'\|_\infty < \infty$, we know that
	\begin{equation}
		\sup_{x \in \mathbb{R}} |F_1(x) - F_2(x) - G(x)| \leq \frac{24\|F_2' + G'\|_\infty}{\pi T} +  2 \sup_{x \in \mathbb{R}}|F_1 * V_T(x) - F_2 * V_T(x) - G * V_T(x)|.
	\end{equation}
	So, in order to prove~\eqref{sup-x-R-F-1-F-2-G-1}, it suffices to prove
	\begin{equation}\label{sup-F-1-V-T-x}
		\sup_{x \in \mathbb{R}}|F_1 * V_T(x) - F_2 * V_T(x) - G * V_T(x)| \leq \frac{1}{2\pi} \int_{-T}^T \bigg| \frac{f_1(t) - f_2(t) - g(t)}{t} \bigg|dt.
	\end{equation}
	
	Note that $\int_{\mathbb{R}}e^{itx} d(F_i * V_T)(x) = f_i(t)w_T(t)$ for $t \in \mathbb{R}$, $i = 1, 2$. Since $f_i w_T \in L^1(\mathbb{R})$, from the Fourier inversion theorem, we know that $F_i * V_T$ is differentiable, and
	\begin{equation}
		(F_i * V_T)'(u) = \frac{1}{2\pi} \int_\mathbb{R} e^{-itu} f_i(t)w_T(t)dt, \quad  \forall u \in \mathbb{R}, \quad i = 1, 2.
	\end{equation}
	By integrating this identity and using Fubini's theorem (and the fact that $w(t) = 0 $ when $|t| >T$), 
	we know that for any $a, x \in \mathbb{R}$, we have
	\begin{equation}\label{F_i-1}
		(F_i * V_T)(x) - (F_i * V_T)(a) = \frac{1}{2\pi} \int_{\mathbb{R}} \frac{e^{-itx} - e^{-ita}}{-it} f_i(t)w_T(t) dt, \quad i = 1, 2.
	\end{equation}
	
	We notice that by the definition of $G$, for $x \in \mathbb{R}$,
	\begin{equation}
		G * V_T(x) = \int_\mathbb{R} G(x-y)V_T(y)dy = \frac{1}{2\pi} \int_\mathbb{R}  \bigg( \int_\mathbb{R} e^{-it(x-y)}\frac{g(t)}{-it} v_T(y) dt  \bigg) dy. 
	\end{equation}
	Using Fubini's theorem and the condition that the function $t\mapsto \frac{g(t)}{t} $ is in $L^1(\mathbb{R})$, we have for $x \in \mathbb{R}$,
	\begin{equation}
		G * V_T(x) = \frac{1}{2\pi} \int_\mathbb{R} e^{-itx} \frac{g(t)w_T(t)}{-it} dt. 
	\end{equation}
	It follows that for any $a, x \in \mathbb{R}$, 
	\begin{equation}\label{G-x}
		(G * V_T)(x) - (G * V_T)(a) = \frac{1}{2\pi} \int_\mathbb{R} \frac{e^{-itx} - e^{-ita}}{-it} g(t)w_T(t) dt. 
	\end{equation}
	Combining \eqref{F_i-1} and \eqref{G-x}, we know that $H := (F_1 - F_2 - G) * V_T$ satisfies for any $a, x \in \mathbb{R}$, 
	\begin{equation}\label{Delta}
		H (x) - H (a) = \frac{1}{2\pi} \int_{\mathbb{R}} \frac{e^{-itx} - e^{-ita}}{-it} (f_1(t) - f_2(t) - g(t))w_T(t)dt. 
	\end{equation}
	On the one hand, we have $\lim_{a \to \pm \infty} H (a) = 0$ since $\lim_{x \to \pm \infty} (F_1(x) - F_2(x)) = \lim_{x \to \pm \infty} G(x) = 0$ (by the condition and  the Riemann-Lebesgue lemma). On the other hand, since the function $t \mapsto \frac{f_1(t) - f_2(t) - g(t)}{t}$ is in  $L^\infty ( [-T, T])$, we have  $\lim_{a \to \pm \infty}\int_{\mathbb R} e^{-ita} \frac{f_1(t) - f_2(t) - g(t)}{t}w_T(t)dt = 0$ again by the Riemann-Lebesgue lemma. Thus, taking $a \to - \infty$ in ~\eqref{Delta} we get that
	\begin{equation}
		H (x) = \frac{1}{2\pi} \int_\mathbb{R} 
		e^{-itx}  \frac{f_1(t) - f_2(t) - g(t)}{-it} w_T(t) dt.  
	\end{equation}
	Taking absolute value and supremum on $x\in\mathbb{R}$, we derive~\eqref{sup-F-1-V-T-x}. 
\end{proof}

\subsection{Proof of Theorem~\ref{thm::main} for the case  $\rho > -\alpha$}  

We can now give the proof of 
Theorem~\ref{thm::main} for  the case $\rho > -\alpha$.

\begin{proof}[Proof of Theorem~\ref{thm::main}, case $\rho > -\alpha$]
	Let  $f \in \mathcal{L}$. 
	We assume that $f$ is real-valued and $\min_{y \in \mathbb{S}_+^{d-1}}f(y) > 0$ without loss of generality; for the general case we can use the decomposition $f = \mathrm{Re}(f) + i \,\mathrm{Im}(f)$  when $f$ is complex-valued, and 
	$f = (f_+ +1) - (f_- +1)$ with	  $f_+ = \max (f, 0) $, $f_- = \max (-f; 0)$, when $f$ is real-valued. 
	%	Set $\eta = \frac{1}{2(\alpha-\rho)}$. As in Lemma \ref{lem:esseen-estimate},  we write $ l_n = A(a_n), m_n = l_n^{\frac{-1}{\alpha-\rho}+\eta}$. 
	As in Lemma \ref{lem:esseen-estimate},  we write $ l_n = A(a_n)$.
	Recall that $\Pi f = \nu(f)\mathbf{1}$ and that 
	% recall also the function   defined in \eqref{def-MsNs}: 
	$M(s) = \frac{1}{2\pi} \int_\mathbb{R} \frac{e^{-its}}{it}h_\alpha(t)J(t)dt$ for $s \in \mathbb{R}$ (see  \eqref{def-MsNs}).
	
	%	{\color{blue}
		To prove~\eqref{lim-n-infty-A-a-n--1}, we need to show that 
		\begin{equation}\label{lim-l-n-sup=x-1}
			\lim_{n \to \infty} l_n^{-1} \sup_{x \in \mathbb{S}_+^{d-1}, s \in \mathbb{R}}\Big|\mathbb{E}[f(X_n^x) \mathbbm{1}_{\{\frac{S_n^x}{a_n}-b_n \leq s\}}] - \nu(f)H_\alpha(s) - \nu(f)l_nM(s)\Big| = 0. 
		\end{equation}
		Since  $ \sup_{x \in \mathbb{S}_+^{d-1}}  | \mathbb{E}[f(X_n^x)] - \nu (f)  | 
		=  \sup_{x \in \mathbb{S}_+^{d-1}} | (\Pi + R_0^n)f(x) - \nu(f) | = \|R_0^n\|_\infty \to 0$ exponentially fast as $n \to \infty$, it suffices to show that 
		\begin{equation}\label{lim-l-n-sup=x}
			\lim_{n \to \infty} l_n^{-1} \sup_{x \in \mathbb{S}_+^{d-1}, s \in \mathbb{R}}\Big|\mathbb{E}[f(X_n^x) \mathbbm{1}_{\{\frac{S_n^x}{a_n}-b_n \leq s\}}] - \mathbb{E}[f(X_n^x)]H_\alpha(s) - \nu(f)l_nM(s)\Big| = 0. 
		\end{equation} 
		%	}

	%	Notice that $\lim_{s \to \pm \infty} M(s) = 0$ by the Riemann-Lebesgue lemma and $\int_\mathbb{R} e^{its}dM(s) = -h_\alpha(t)J(t)$ by the Fourier inversion theorem. 
	%	{\color{blue}
		We choose $\beta > 0$ such that $\beta = 1$ if $\alpha \in (1, 2)$, and $\beta \in (\frac{\alpha-\rho}{2}, \alpha)$ if $\alpha \leq 1$. Note that  $\beta = 1 > \frac{\alpha-\rho}{2}$ when $\alpha \in (1, 2)$, since $\rho > \alpha-2 = \alpha-2\beta$. Choose $\gamma \in (\frac{-\rho}{\alpha}, \frac{2\beta-\alpha}{\alpha})$. With these choices, from Lemma~\ref{lem::decay_1} we know that there exist positive numbers $N, \tau, C_1, C_2$ such that \eqref{lambda-t-a-n-n-K-t} and  \eqref{lambda-t-a-n-n-phi-Z} 
		hold for $n \geq N$ and $t \in [-\tau a_n, \tau a_n]$, with $K(t) = \exp(-C_2 |t|^{\alpha}\min(|t|^{\frac{\alpha}{2}}, |t|^{-\frac{\alpha}{2}}))$. Define $T = \tau a_n$. 
		%	}
	%	Using Esseen's smoothing inequality 
	%	{\color{blue}(\cite[Theorem V.2.2]{Sum-Petrov}, or \cite[Lemma XVI.3.2]{Feller-1971-Vol-2})} for the increasing function $s \mapsto \mathbb{E}[f(X_n^x)\mathbbm{1}_{\{\frac{S_n^x}{a_n} - b_n \leq s\}}]$ and the function of bounded variation  $\mathbb{E}[f(X_n^x)]H_\alpha + \nu(f)l_nM$, we get,
	Using Lemma~\ref{lem::Esseen} with $F_1(s) = \mathbb{E}[f(X_n^x)\mathbbm{1}_{\{\frac{S_n^x}{a_n}-b_n \leq s\}}]$, $F_2(s) = \mathbb{E}[f(X_n^x)]H_\alpha(s)$ and $G(s) = \nu(f)l_nM(s)$, we get that for all $x \in \mathbb{S}_+^{d-1}$ and $n \geq N$,
	with $C_n := \mathbb{E}[f(X_n^x)]\|H_\alpha'\|_\infty + l_n\nu(f) \|M'\|_\infty$, %  (which is bounded), 
	\begin{align}\label{l-n--1-sup-s-in-R}
		%		& l_n^{-1} \sup_{s \in \mathbb{R}}|\mathbb{P}[\frac{S_n^x}{a_n}-b_n \leq s] - H_\alpha(s) - l_nM(s)| \notag \\
		%		& \leq \frac{1}{\pi }\big(\int_{-T}^{T} |\lambda(\frac{t}{a_n})^n \frac{\Pi_{\frac{t}{a_n}}\mathbf{1}(x) - 1}{l_nt}|dt  + \int_{-T}^T |\frac{\lambda(\frac{t}{a_n})^n - \phi_Z(\frac{t}{a_n})^n}{l_nt}|dt + \int_{-T}^T |\frac{(R_{\frac{t}{a_n}}^n-R_0^n)\mathbf{1}(x)}{l_nt}|dt \notag \\
		%		& + \int_{-T}^T |\frac{\phi_Z(\frac{t}{a_n})^ne^{-itb_n}-h_\alpha(t)+l_nh_\alpha(t)J(t)}{l_nt}|dt\big) + \frac{24 C_0}{\pi l_nT},
		& l_n^{-1} \sup_{s \in \mathbb{R}}\Big|\mathbb{E}[f(X_n^x) \mathbbm{1}_{\{\frac{S_n^x}{a_n}-b_n \leq s\}}] - \mathbb{E}[f(X_n^x)]H_\alpha(s) - \nu(f)l_nM(s)\Big| \notag \\
		& \leq  \frac{24C_n }{\pi l_n T} +  \frac{1}{\pi} \int_{-T}^T \bigg|\frac{e^{-itb_n}P_{\frac{t}{a_n}}^n f(x) - P_0^nf(x)h_\alpha(t) + \nu(f)l_nh_\alpha(t)J(t)}{l_nt}\bigg|dt.
	\end{align}
	From \eqref{R-t-n-R-0-n},  we have, for
	all $x \in \mathbb{S}_+^{d-1}$ and
	$t \in I$,
	\begin{align}
		& e^{-itb_n}P_{\frac{t}{a_n}}^nf(x) - P_0^nf(x)h_\alpha(t) + \nu(f) l_n h_\alpha(t)J(t) \notag \\
		= \; &  e^{-itb_n}\lambda\big(\frac{t}{a_n}\big)^n (\Pi_{\frac{t}{a_n}}f(x) - \nu(f)) + (e^{-itb_n}R_{\frac{t}{a_n}}^n - h_\alpha(t)R_0^n)f(x)   \notag \\
		& + \nu(f)e^{-itb_n}\Big(\lambda\big(\frac{t}{a_n} \big)^n - \phi_Z\big(\frac{t}{a_n}\big)^n \Big)
		+ \nu(f)\Big(e^{-itb_n}\phi_Z\big(\frac{t}{a_n}\big)^n - h_\alpha(t) + l_nh_\alpha(t)J(t)\Big).\label{e-itb-n-p}
	\end{align}
	Plugging this into~\eqref{l-n--1-sup-s-in-R}, we get that for all $x \in \mathbb{S}_+^{d-1}$ and $n \geq N$,
	\begin{align}\label{l-n--1-sup-s-in-R-2}
		& l_n^{-1} \sup_{s \in \mathbb{R}}\Big|\mathbb{E}[f(X_n^x) \mathbbm{1}_{\{\frac{S_n^x}{a_n}-b_n \leq s\}}] - \mathbb{E}[f(X_n^x)]H_\alpha(s) - \nu(f)l_nM(s)\Big| \notag \\
		& \leq 
		\frac{24 C_n}{\pi l_nT} + 
		\frac{1}{\pi }\bigg(\int_{-T}^{T} \bigg|\lambda\big(\frac{t}{a_n}\big)^n \frac{\Pi_{\frac{t}{a_n}}f(x) - \nu(f)}{l_nt}\bigg|dt 
		+ \int_{-T}^T \bigg|\frac{(e^{-itb_n}R_{\frac{t}{a_n}}^n-h_\alpha(t)R_0^n)f(x)}{l_nt}\bigg|dt 
		\notag \\
		& \, + \nu(f)\int_{-T}^T \bigg|\frac{\lambda(\frac{t}{a_n})^n - \phi_Z(\frac{t}{a_n})^n}{l_nt}\bigg|dt 		 + \nu(f) \int_{-T}^T \bigg|\frac{e^{-itb_n}\phi_Z(\frac{t}{a_n})^n-h_\alpha(t)+l_nh_\alpha(t)J(t)}{l_nt}\bigg|dt\bigg). 
		% =: \frac{1}{\pi}\sum_{i=1}^4 I_i + \frac{24 C_n}{\pi l_nT},
	\end{align}
	%	where $C_n := \mathbb{E}[f(X_n^x)]\|p_\alpha\|_\infty + l_n\nu(f) \frac{1}{2\pi}\int_\mathbb{R} |h_\alpha(t)J(t)|dt$ is bounded.
	Note that $\rho > \alpha - 2 > -\rho - 2$, which implies that $\rho > -1$. Since $|A|$ is $\rho$-regularly varying (see \cite[Theorem B.2.1 and Remark B.3.15]{haan06extreme}), we know that $l_n=A(a_n)$ satisfies $|l_nT| = |\tau A(a_n)a_n| \to +\infty$ as $n \to \infty$. 
	As $(C_n)$ is bounded, the first term in \eqref{l-n--1-sup-s-in-R-2} tends to $0$. Hence, in order to prove~\eqref{lim-l-n-sup=x}, 
	we only need to show that the four integrals in~\eqref{l-n--1-sup-s-in-R-2}, denoted 
	successively 
	by $I_1, \cdots, I_4$,  tend to $0$ as $n \to \infty$. 
	
	By Part 1 of Proposition~\eqref{prop::useful}, there exists $C > 0$ such that for $n \geq 1$, 
	$$|\Pi_{\frac{t}{a_n}}f(x) - \nu(f)| = O(\| P_{\frac{t}{a_n}} - P\|_\mathcal{L}) \leq  C\big(\frac{|t|}{a_n}\big)^\beta, \quad \forall t \in [-T, T].$$
	Using this inequality and~\eqref{lambda-t-a-n-n-K-t}, we have that for $n \geq N$,
	\begin{equation}
		I_1 \leq \int_{-T}^T K(t) \bigg| \frac{C(\frac{|t|}{a_n})^\beta}{l_nt}\bigg|dt \leq \frac{C}{l_na_n^\beta} \int_\mathbb{R} |t|^{\beta - 1} K(t)dt.
	\end{equation}
	Since $\beta > -\rho$, we have $l_na_n^\beta \to \infty$ as $n \to \infty$, thus $I_1 \to 0$. 
	
	Let $\kappa \in (0, 1)$ be as in Proposition~\ref{prop::useful}. By the decomposition
	\begin{equation}
		e^{-itb_n}R_{\frac{t}{a_n}}^n - h_\alpha(t)R_0^n = e^{-itb_n}(R_{\frac{t}{a_n}}^n-R_0^n) + (1-h_\alpha(t))R_0^n + (e^{-itb_n}-1)R_0^n 
	\end{equation}
	and \eqref{R-t-n-R-0-n}, there exists $C > 0$ such that for $n \geq N$,  
	\begin{equation}\label{I-2-C-n-l-n-1}
		I_2 \leq C\kappa^n l_n^{-1}\int_{|t| \leq T}\bigg| \frac{(\frac{|t|}{a_n})^\beta}{t}\bigg|dt + C\kappa^nl_n^{-1}\int_{|t|\leq T} \bigg|\frac{1-h_\alpha(t)}{t}\bigg|dt + l_n^{-1}b_nO(T\kappa^n).
	\end{equation}
	This implies that $I_2 = o(1) + l_n^{-1}b_nO(T\kappa^n)$ as $n \to \infty$. Using~\cite[Proposition 1, 2]{de1999exact} 
	and~\cite[Page 86, Lemma 2.6.1]{Ibragimov-Linnik-1971}, we know that $b_n=O(1)$. Thus $I_2 \to 0$ as $n \to \infty$. 
	
	%	Since $\frac{2\beta-\alpha}{\alpha} > \frac{-\rho}{\alpha}$, we can choose $\gamma > \frac{-\rho}{\alpha}$ such that~\eqref{lambda-t-a-n-n-phi-Z} holds. 
	%	{\color{blue}
		%	By considering the two regions $|t| \leq n^{-\tau}$ and $n^\tau \leq |t| \leq T$ and 
		By \eqref{lambda-t-a-n-n-phi-Z}, we have that for $n \geq N$, 
		%	}
	%	$$I_3 \leq \int_{|t|\leq n^{\tau}}\frac{C_1K(t)t^{2\beta}}{l_nn^\gamma|t|}dt + \int_{|t| \geq n^\tau} \frac{2K(t)}{l_n|t|} dt.$$
	\begin{equation}
		I_3 \leq \int_{\mathbb{R}} \frac{C_1K(t)|t|^{2\beta}}{l_n n^\gamma |t|} dt
	\end{equation}
	Since $\gamma > \frac{-\rho}{\alpha}$, we have $l_nn^\gamma \to +\infty$. Note that $K(t)$ decays faster than any polynomial  of $|t|$ as $t \to \pm\infty$. It follows that $I_3 \to 0$ as $n \to \infty$. 
	
	Set $\eta = \frac{1}{2(\alpha-\rho)}$. As in Lemma \ref{lem:esseen-estimate},  we write $ m_n = |l_n|^{-\eta}$. By~\eqref{lim-n-1-l-n} and integrations on two regions $|t| < m_n$ and $m_n \leq |t| \leq T$, we   get that for $n \geq N$, 
	$$I_4 \leq o(1) + \int_{|t|\geq m_n} \frac{C(K(t) + |h_\alpha(t)|(1+|l_nJ(t)|)}{|l_n|m_n}dt.$$ Since $K(t)$ and $|h_\alpha(t)|$ decay faster than any polynomial 
	of $|t|$ 
	as $t \to \pm\infty$, we get that $I_4 \to 0$ as $n \to \infty$.
	
	It follows that the left hand side of~\eqref{l-n--1-sup-s-in-R-2} tends to $0$ as $n \to \infty$, uniformly in $x \in \mathbb{S}_+^{d-1}$. 
	%	{\color{blue}
		This shows~\eqref{lim-l-n-sup=x} and proves the theorem for the case $\rho > -\alpha$. 
		%	}
	%	 Notice that $\mathbb{E}[f(X_n^x)] = (\Pi + R_0^n)f(x) \to \nu(f)$ as $n \to \infty$ exponentially fast, we conclude that~\eqref{lim-n-infty-A-a-n--1} holds.
\end{proof}

\subsection{Proof of Theorem~\ref{thm::main} for the case $\rho < -\alpha$}

% We then consider the case $\rho < -\alpha$. 

For this case, we first establish three lemmas.

%	{\color{blue}
	The first lemma introduces the operator $\Delta $ used later in the proof of Theorem~\ref{thm::main}. Recall that  the operator $Q$ is defined in~\eqref{def-Q}, $\Pi$ is the  projection operator 
	(see \eqref{def-Pi}), and $R_0= P-\Pi$ (see  Part 2 of Proposition \ref{prop::useful}). 
	
	\begin{lemma}\label{lem::operator-U}
		Assume Conditions~\ref{cond::Furstenberg-Kesten}, \ref{cond::stable_1} and \ref{cond::when_modulus_large}. Then, the following limit exists in $\mathcal{B}(\mathcal{L})$ equipped with the operator norm $\| \cdot \|_{\mathcal L}$:
		\begin{equation}\label{u-lim-sum}
			\Delta  := \lim_{m \to \infty}\sum_{i=0}^{m-1}P^{m-1-i}(Q-P)P^{i} 
			% = \lim_{m \to \infty}(P^{m-1}Q + \cdots + QP^{m-1} - mP^m).
			= \Pi (Q-P)\sum_{i\geq 0} R_0^i. 
		\end{equation} 
		Moreover, we have $\Delta f = \delta (f)\mathbf{1}$ for all $f \in \mathcal{L}$, where $\delta : \mathcal{L} \to \mathbb{C}$ is the bounded linear mapping defined by 
		\begin{equation}\label{def-lambda-f}
			\delta (f) = \nu\Big((Q-P)\sum_{i\geq 0} R_0^if\Big), \quad \forall f \in \mathcal{L}. 
		\end{equation}
	\end{lemma}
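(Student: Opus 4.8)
The plan is to build everything on the spectral decomposition of $P = P_0$ supplied by Part~2 of Proposition~\ref{prop::useful} at $t=0$, namely $P^n = \Pi + R_0^n$ for $n\geq 1$ with $\Pi^2=\Pi$, $\Pi R_0 = R_0\Pi = 0$ and $\|R_0^n\|_{\mathcal{L}} \leq C\kappa^n$ for some $C>0$, $\kappa\in(0,1)$, together with one crucial cancellation. First I would record that $P$ and $Q$ are bounded operators on $\mathcal{L}$ of norm at most $1$: by the contraction property $\mathbf{d}(g\cdot x, g\cdot y) \leq c(g)\,\mathbf{d}(x,y) \leq \mathbf{d}(x,y)$, and since $\mu$ is a probability measure while $\tilde{\mu}$ — being a weak limit of probability measures carried by the set $\{g\geq 0 : \|g\| = 1\}$, which under Condition~\ref{cond::Furstenberg-Kesten} is a compact set of matrices with entries bounded away from $0$ and from above — is also a probability measure, one obtains $\|Pf\|_\infty, \|Qf\|_\infty \leq \|f\|_\infty$ and $m(Pf), m(Qf) \leq m(f)$. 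In particular $P\mathbf{1} = Q\mathbf{1} = \mathbf{1}$, so that, writing $V := Q-P \in \mathcal{B}(\mathcal{L})$ and using $\Pi g = \nu(g)\mathbf{1}$, we get the key identity $V\Pi f = \nu(f)(Q\mathbf{1} - P\mathbf{1}) = 0$ for every $f \in \mathcal{L}$; hence $V\Pi = 0$, and consequently $\Pi V\Pi = 0$ and $R_0^j V\Pi = 0$ for all $j\geq 0$.

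Next I would decompose the partial sum $S_m := \sum_{i=0}^{m-1}P^{m-1-i}VP^i$. For $m\geq 2$, I separate the two boundary terms $i=0$ and $i=m-1$ (for which one of the two exponents equals $0$) and, for $1\leq i\leq m-2$, substitute $P^{m-1-i} = \Pi + R_0^{m-1-i}$ and $P^i = \Pi + R_0^i$; expanding and using $V\Pi = 0$ to discard the terms in which $V$ is immediately followed by $\Pi$ gives
\[
  S_m = \Pi V + R_0^{m-1}V + VR_0^{m-1} + \sum_{i=1}^{m-2}\Pi V R_0^i + \sum_{i=1}^{m-2}R_0^{m-1-i}V R_0^i .
\]
Then $\|R_0^{m-1}V\|_{\mathcal{L}}$ and $\|VR_0^{m-1}\|_{\mathcal{L}}$ are $\leq C\kappa^{m-1}\|V\|_{\mathcal{L}} \to 0$; the last double sum has norm $\leq C^2(m-2)\kappa^{m-1}\|V\|_{\mathcal{L}} \to 0$; and $\sum_{i=1}^{m-2}\Pi V R_0^i$ converges in $\mathcal{B}(\mathcal{L})$ to $\Pi V\sum_{i\geq 1}R_0^i$ because $\sum_{i\geq 1}\|R_0^i\|_{\mathcal{L}} < \infty$. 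Letting $m\to\infty$ therefore yields, in operator norm,
\[
  \Delta = \Pi V + \Pi V\sum_{i\geq 1}R_0^i = \Pi V\sum_{i\geq 0}R_0^i = \Pi(Q-P)\sum_{i\geq 0}R_0^i ,
\]
where $\sum_{i\geq 0}R_0^i$ (with $R_0^0$ the identity) is a well-defined bounded operator on $\mathcal{L}$ by the same geometric estimate.

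Finally, since $\Pi g = \nu(g)\mathbf{1}$, this reads $\Delta f = \nu\big((Q-P)\sum_{i\geq 0}R_0^i f\big)\mathbf{1} = \delta(f)\mathbf{1}$, and the linear functional $\delta$ satisfies $|\delta(f)| \leq \|Q-P\|_{\mathcal{L}}\,\|\sum_{i\geq 0}R_0^i\|_{\mathcal{L}}\,\|f\|_{\mathcal{L}}$, hence is bounded. I expect the only genuinely delicate point to be the verification that $Q$ is a well-defined bounded operator with $Q\mathbf{1} = \mathbf{1}$, i.e. that $\tilde{\mu}$ is a probability measure carried by matrices whose action on $\mathbb{S}_+^{d-1}$ is defined; everything after that is a routine consequence of the spectral gap and the cancellation $V\Pi = 0$. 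As a consistency check, note that $R_0\mathbf{1} = (P-\Pi)\mathbf{1} = 0$, whence $\Delta\mathbf{1} = \Pi(Q-P)\mathbf{1} = 0$, matching the identity $\Delta\mathbf{1} = 0$ used later.
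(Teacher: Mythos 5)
Your proof is correct and follows essentially the same route as the paper: the cancellation $(Q-P)\Pi=0$, the decomposition $P^n=\Pi+R_0^n$, and the geometric decay $\|R_0^n\|_{\mathcal L}\le C\kappa^n$ yield convergence in $\mathcal B(\mathcal L)$ to $\Pi(Q-P)\sum_{i\ge 0}R_0^i$ and hence $\Delta f=\delta(f)\mathbf 1$ with $\delta$ bounded. The only difference is that you make explicit the boundary terms of the sum and the verification that $Q$ is a well-defined contraction on $\mathcal L$ with $Q\mathbf 1=\mathbf 1$, details the paper's proof leaves implicit.
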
 
	\begin{proof}
		Since $(Q-P)\Pi = 0$, we get from~\eqref{R-t-n-R-0-n} that for $m \geq 1$, 
		\begin{align}
			\sum_{i=0}^{m-1}P^{m-1-i}(Q-P)P^{i} & = \sum_{i=0}^{m-1}(\Pi + R_0^{m-1-i})(Q-P)R_0^{i} \notag \\
			& = \sum_{i=0}^{m-1}\Pi(Q-P)R_0^i +  \sum_{i=0}^{m-1} R_0^{m-1-i}(Q-P)R_0^i. \label{U}
		\end{align}
		%		{\color{blue}
			Since $\|R_0^n\|_\mathcal{L} \leq C\kappa^n \;  \forall n \geq 0$,  by Proposition~\ref{prop::useful}, 
			%		}
		we see that 
		$$		\bigg\|\sum_{i=0}^{m-1} R_0^{m-1-i}(Q-P)R_0^i\bigg\|_\mathcal{L} 
		\leq C^2 m  \kappa^m   \| Q-P \|_{\mathcal L} 
		\to 0,  \quad \mbox{  as  }  m \to \infty. $$
		Hence the limit in~\eqref{u-lim-sum} converges in $\mathcal{B}(\mathcal{L})$ with
		\begin{equation}\label{U-22}
			\Delta  = \sum_{i=0}^\infty \Pi(Q-P)R_0^i = \Pi(Q-P)\sum_{i=0}^\infty R_0^i. 
		\end{equation}
		This implies that for any $f \in \mathcal{L}$, 
		% with the bounded linear mapping $\delta $ defined in~\eqref{def-lambda-f}, 
		\begin{equation}
			\Delta f = \Pi\Big((Q-P) \sum_{i \geq 0} R_0^i f\Big) = \nu\Big((Q-P)\sum_{i \geq 0} R_0^i f\Big) \mathbf{1} = \delta (f) \mathbf{1}.
		\end{equation}
		This ends the proof of the lemma.  
	\end{proof}
	%		Letting $\delta : \mathcal{L} \to \mathbb{R}$ be the linear mapping defined by $\delta (f) = \nu((Q-P)\sum_{i\geq 0} R_0^if)$ for all $f \in \mathcal{L}$, we see that $\delta $ satisfies $\Delta f = \delta (f)\mathbf{1}$. 
	
	%		{\color{red} \begin{equation}\label{U-22bis}
			%				\Delta  f = \sum_{i=0}^\infty \Pi(Q-P)R_0^i f 
			%				=  \sum_{i=0}^\infty \nu( (Q-P)R_0^i f ) 
			%				=  \Big( \sum_{i=0}^\infty \nu( (Q-P)R_0^i) \Big) ( f )  
			%			\end{equation}
		%			$$ | (\nu h f) | = |  \nu (h f)  | \leq \nu | h f |  \leq  \nu ( \| h \| \| f \| )    $$
		%		}
	%		}
%	Define the operator $\Delta $ on $\mathcal{L}$:

%	{\color{blue}
	The second lemma concerns the asymptotics of operators $P_t - P$ and $\Pi_t - \Pi$ as $t \to 0$. 
	%	}
Recall the number $p$ introduced in Condition~\ref{cond::stable_1} and the constant  \mbox{$c = \lim_{s \to +\infty} s^\alpha(1-F(s)+F(-s))$}
defined in  \eqref{c}. For $t \in \mathbb{R}$, define
\begin{equation}
	C(t, \alpha) = -cd_\alpha + i\,\mathrm{sgn}(t)c(2p-1)\alpha d_{\alpha + 1}, 
\end{equation} 
%	{\color{blue}
	where $d_a$ is defined in~\eqref{d-alpha} for $a \in (0, 2)$. 
	%	}
\begin{lemma} \label{lem-311}
	Assume Conditions~\ref{cond::Furstenberg-Kesten}, \ref{cond::stable_1}, \ref{cond::when_modulus_large}, and $\rho < -\alpha$. Then, for any $f \in \mathcal{L}$, 
	as $t \to 0$, 
	\begin{align}
		& \|(P_t - P)f - C(t, \alpha)|t|^\alpha Qf\|_\infty = o(|t|^\alpha),
		\label{P-t--P-f-C-t-alpha} \\
		& \|(\Pi_t - \Pi)f - C(t, \alpha)|t|^\alpha \Delta f\|_\infty = o(|t|^\alpha).
		\label{Pi-t-Pi-f-C-t-alpha-t-alpha}
	\end{align}
	%		where $C(t, \alpha) = c\cos \frac{\pi\alpha}{2} - ic\, \mathrm{sgn}(t) \alpha d_{\alpha + 1}(2p-1)$. 
\end{lemma}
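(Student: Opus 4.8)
Throughout, I abbreviate $M=\log\|A_1\|$, $D=A_1/\|A_1\|$ and $\zeta(d,x)=\log|dx|$, so that $\sigma(A_1,x)=M+\zeta(D,x)$. Note that $\rho<-\alpha$ together with $\rho>\alpha-2$ forces $\alpha\in(0,1)$, so by Lemma~\ref{lem::bound} we have $\|P_t-P\|_{\mathcal L}=O(|t|^\alpha)$; moreover Condition~\ref{cond::Furstenberg-Kesten} gives $|\zeta(d,x)|\le\log K$ and confines $D$ to the compact set $\mathcal C=\{g\ge0:\|g\|=1,\ \min_{i,j}g(i,j)\ge(dK)^{-1}\}$, whence $\tilde\mu$, a weak limit of probability measures on $\mathcal C$, is a probability measure and $Q\mathbf 1=\mathbf 1$. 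The plan is to reduce~\eqref{P-t--P-f-C-t-alpha} to a one–dimensional characteristic–function expansion plus a ``decoupling'' estimate coming from Condition~\ref{cond::when_modulus_large}, proving it in the uniform form $\|(P_t-P)-C(t,\alpha)|t|^\alpha Q\|_{\mathcal L\to\infty}=o(|t|^\alpha)$ (where $\|T\|_{\mathcal L\to\infty}:=\sup_{\|f\|_{\mathcal L}\le1}\|Tf\|_\infty$), and then to deduce~\eqref{Pi-t-Pi-f-C-t-alpha-t-alpha} by analytic perturbation theory of the eigenprojection. For $f\in\mathcal L$, $x\in\mathbb{S}_+^{d-1}$, write $f(D\cdot x)=Qf(x)+h_x(D)$ with $h_x(d)=f(d\cdot x)-Qf(x)$, so $\|h_x\|_\infty\le2\|f\|_\infty$ and $\int h_x\,d\tilde\mu=0$; then
\begin{equation*}
	(P_t-P)f(x)=\mathbb E\big[(e^{it\sigma(A_1,x)}-1)f(D\cdot x)\big]=Qf(x)\big(\varphi_x(t)-1\big)+\mathbb E\big[(e^{it\sigma(A_1,x)}-1)h_x(D)\big],
\end{equation*}
where $\varphi_x(t)=\mathbb E[e^{it\sigma(A_1,x)}]$. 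By the classical expansion in the (normal) domain of attraction, cf.~\cite[Theorem~2.6.5]{Ibragimov-Linnik-1971} and~\cite[Propositions 1 and 2]{de1999exact} applied to $M$ (whose law has the same tail behaviour as $F$, cf.\ Lemma~\ref{lem::bound} and the remarks after Condition~\ref{cond::stable_1}), together with $d_\alpha=\Gamma(1-\alpha)\cos\tfrac{\pi\alpha}{2}$ and $\alpha d_{\alpha+1}=\Gamma(1-\alpha)\sin\tfrac{\pi\alpha}{2}$, one gets $\mathbb E[e^{itM}]-1=C(t,\alpha)|t|^\alpha+o(|t|^\alpha)$ (equivalently $C(t,\alpha)|t|^\alpha=\log h_\alpha(t)$); since $|\varphi_x(t)-\mathbb E[e^{itM}]|\le|t|\log K$, the same expansion holds for $\varphi_x$ uniformly in $x$. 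It thus remains to show $\sup_x\big|\mathbb E[(e^{it\sigma(A_1,x)}-1)h_x(D)]\big|=o(|t|^\alpha)$, uniformly over $\|f\|_{\mathcal L}\le1$.

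For this, set $A=A(t)=|t|^{-1/2}$ and split the expectation according to $|M|\le A$, $M>A$, $M<-A$. The middle range contributes $O\big(|t|\,\mathbb E[(|M|+\log K)\mathbf 1_{|M|\le A}]\big)=O(|t|A^{1-\alpha})=O(|t|^{(1+\alpha)/2})=o(|t|^\alpha)$. On $\{M>A\}$, write $e^{it\sigma(A_1,x)}-1=(e^{itM}-1)+e^{itM}(e^{it\zeta(D,x)}-1)$; the last term contributes $O(|t|\,\mathbb P[M>A])=o(|t|^\alpha)$, and for the first the identity $(e^{itM}-1)\mathbf 1_{M>A}=(e^{itA}-1)\mathbf 1_{M>A}+\int_A^\infty it\,e^{itn}\mathbf 1_{M>n}\,dn$ gives $\mathbb E[(e^{itM}-1)h_x(D)\mathbf 1_{M>A}]=(e^{itA}-1)r(A)+\int_A^\infty it\,e^{itn}r(n)\,dn$ with $r(n):=\mathbb E[h_x(D)\mathbf 1_{M>n}]$. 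Here $|r(n)|\le\mathbb P[M>n]\,\bar\varepsilon(n)$, where $\bar\varepsilon(n):=\sup\{|\mathbb E[h_x(D)\mid M>n]|:x,\ \|f\|_{\mathcal L}\le1\}\to0$ as $n\to\infty$: this is Condition~\ref{cond::when_modulus_large}, made uniform by observing that $\{d\mapsto f(d\cdot x):x,\ \|f\|_{\mathcal L}\le1\}$ is bounded and equi-Lipschitz on $\mathcal C$, so the weak convergence of the conditional laws is uniform over this equicontinuous family. The boundary term is $O(|t|A\,\mathbb P[M>A]\bar\varepsilon(A))=o(|t|^\alpha)$. The integral $\int_A^\infty it\,e^{itn}r(n)\,dn$ is the delicate term: Condition~\ref{cond::when_modulus_large} provides no rate for $\bar\varepsilon$, so $\int_A^\infty|r(n)|\,dn$ need not converge. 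I control it by a second scale $B=B(t):=\lceil(|t|\bar\varepsilon(A))^{-1}\rceil\vee(A+1)$: on $[A,B]$, $\big|\int_A^B it\,e^{itn}r(n)\,dn\big|\le|t|\bar\varepsilon(A)\int_A^B\mathbb P[M>n]\,dn=O\big(|t|\bar\varepsilon(A)B^{1-\alpha}\big)$; on $[B,\infty)$, $r$ has bounded variation with $r(+\infty)=0$ and $\mathrm{TV}(r;[B,\infty))\le\mathbb E[|h_x(D)|\mathbf 1_{M>B}]\le2\|f\|_\infty\mathbb P[M>B]$, so Stieltjes integration by parts gives $\big|\int_B^\infty it\,e^{itn}r(n)\,dn\big|\le|r(B)|+\mathrm{TV}(r;[B,\infty))=O(B^{-\alpha})$. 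Adding, the integral is $O\big(|t|\bar\varepsilon(A)B^{1-\alpha}+B^{-\alpha}\big)=O\big((|t|\bar\varepsilon(A))^\alpha\big)=o(|t|^\alpha)$ since $\bar\varepsilon(A)=\bar\varepsilon(|t|^{-1/2})\to0$. The range $\{M<-A\}$ is handled identically via $(e^{itM}-1)\mathbf 1_{M<-A}=(e^{-itA}-1)\mathbf 1_{M<-A}-\int_A^\infty it\,e^{-itn}\mathbf 1_{M<-n}\,dn$ and the second half of Condition~\ref{cond::when_modulus_large}. This proves the first assertion. Taking $f=\mathbf 1$ and using $Q\mathbf 1=\mathbf 1$, we also get $\|(P_t-P)\mathbf 1-C(t,\alpha)|t|^\alpha\mathbf 1\|_{\mathcal L}=o(|t|^\alpha)$, since the $\|\cdot\|_\infty$ part is $\sup_x|\varphi_x(t)-1-C(t,\alpha)|t|^\alpha|=o(|t|^\alpha)$ and $m\big((P_t-P)\mathbf 1\big)=m\big(x\mapsto\varphi_x(t)\big)=O(|t|)$ because $|\varphi_x(t)-\varphi_{x'}(t)|\le|t|\,\mathbb E|\sigma(A_1,x)-\sigma(A_1,x')|$ and $|\sigma(A_1,x)-\sigma(A_1,x')|\le\min\{2|\log(1-\mathbf d(x,x'))|,\log K\}$ (Condition~\ref{cond::Furstenberg-Kesten} and~\cite[Lemma~3.1]{hennion2008stable}).

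For the eigenprojection, by the spectral gap of $P=P_0$ (Part~2 of Proposition~\ref{prop::useful}) and analytic perturbation theory, for $|t|$ small $\Pi_t=\frac{1}{2\pi i}\oint_\Gamma(z-P_t)^{-1}\,dz$ with $\Gamma$ a small circle about $1$; expanding $(z-P_t)^{-1}$ in a Neumann series in $E_t:=P_t-P$ (legitimate since $\|E_t\|_{\mathcal L}\to0$) gives $\Pi_t=\Pi+\mathcal K(E_t)+O(\|E_t\|_{\mathcal L}^2)$ in $\mathcal B(\mathcal L)$, where $\mathcal K(E)=\frac{1}{2\pi i}\oint_\Gamma(z-P)^{-1}E(z-P)^{-1}\,dz$; by Lemma~\ref{lem::bound} the error is $O(|t|^{2\alpha})=o(|t|^\alpha)$. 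Writing $(z-P)^{-1}=\frac{\Pi}{z-1}+S(z)$ with $S(z)$ holomorphic near $1$ and $S:=S(1)=(\mathrm{Id}-R_0)^{-1}(\mathrm{Id}-\Pi)$, a residue computation yields $\mathcal K(E)=\Pi E S+S E\Pi$; since $P\mathbf 1=Q\mathbf 1=\mathbf 1$ gives $(Q-P)\Pi=0$, one obtains $\mathcal K(Q-P)=\Pi(Q-P)S=\Pi(Q-P)\sum_{i\ge0}R_0^i=\Delta$ by Lemma~\ref{lem::operator-U}, and $\mathcal K(P)=0$. Hence, with $E_t':=(P_t-P)-C(t,\alpha)|t|^\alpha Q$ and $\mathcal K$ linear,
\begin{equation*}
	\Pi_t-\Pi=C(t,\alpha)|t|^\alpha\,\Delta+\mathcal K(E_t')+o(|t|^\alpha),\qquad \mathcal K(E_t')f=\Pi E_t'Sf+SE_t'\Pi f.
\end{equation*}
Finally $\|\Pi E_t'Sf\|_\infty=|\nu(E_t'Sf)|\le\|E_t'\|_{\mathcal L\to\infty}\|S\|_{\mathcal L}\|f\|_{\mathcal L}=o(|t|^\alpha)$ by the uniform first assertion, and $SE_t'\Pi f=\nu(f)\,SE_t'\mathbf 1$ with $\|SE_t'\mathbf 1\|_\infty\le\|S\|_{\mathcal L}\|E_t'\mathbf 1\|_{\mathcal L}=o(|t|^\alpha)$ (since $E_t'\mathbf 1=(P_t-P)\mathbf 1-C(t,\alpha)|t|^\alpha\mathbf 1$, whose $\mathcal L$-norm was estimated above). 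Therefore $\|(\Pi_t-\Pi)f-C(t,\alpha)|t|^\alpha\Delta f\|_\infty=o(|t|^\alpha)$, which is~\eqref{Pi-t-Pi-f-C-t-alpha-t-alpha}.

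The step I expect to be the main obstacle is the integral $\int_A^\infty it\,e^{itn}r(n)\,dn$: because Condition~\ref{cond::when_modulus_large} is purely qualitative it cannot be controlled by a convergence rate, and one must play off the oscillation of $e^{itn}$, the bounded variation of $r$, and a carefully tuned intermediate scale $B\asymp(|t|\bar\varepsilon(A))^{-1}$, with $A=A(t)\to\infty$ slowly enough that $\bar\varepsilon(A)\to0$. Everything else reduces either to the one–dimensional expansion of~\cite{de1999exact} or to standard eigenprojection perturbation theory.
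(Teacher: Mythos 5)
Your proof is correct in substance but takes a genuinely different route from the paper on both halves. For \eqref{P-t--P-f-C-t-alpha}, the paper writes $(P_t-P)f(x)=\int(e^{it\log\|g\|}-1)f(g\cdot x)\,d\mu(g)+O(|t|)$, integrates by parts against the monotone tail function $s\mapsto\int f(g\cdot x)\mathbf 1_{\{\log\|g\|>s\}}d\mu(g)$, and invokes the oscillatory-integral lemma \cite[Lemma 2.6.1]{Ibragimov-Linnik-1971}; that lemma is exactly the device that absorbs the absence of any rate in Condition~\ref{cond::when_modulus_large}. You instead center $f(A_1\cdot x)$ by $Qf(x)$, reduce the leading term to the scalar expansion $\mathbb E[e^{it\log\|A_1\|}]-1=\log h_\alpha(t)+o(|t|^\alpha)$, and dispose of the centered remainder by the two-scale truncation $A=|t|^{-1/2}$, $B\asymp(|t|\bar\varepsilon(A))^{-1}$ with a bounded-variation integration by parts; this buys uniformity over $\|f\|_{\mathcal L}\le1$ and avoids the monotonicity (i.e.\ positivity of $f$) that the paper's appeal to the Ibragimov--Linnik lemma implicitly uses, at the price of extra bookkeeping. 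For \eqref{Pi-t-Pi-f-C-t-alpha-t-alpha}, the paper compares two expansions of $P_t^mf-\lambda(t)^mP_0f$ and lets $t\to0$ and then $m\to\infty$ (via \eqref{sum-i-0-m-1-P-m-1-i}), while you use the Riesz-projection perturbation $\Pi_t=\Pi+\Pi E_tS+SE_t\Pi+O(\|E_t\|_{\mathcal L}^2)$ and identify the first-order coefficient of $Q$ as $\Delta$ by a residue computation; this is consistent with Lemma~\ref{lem::operator-U} and is legitimate because Proposition~\ref{prop::useful} (continuity of $\lambda$ with $\lambda(0)=1$ and $\|R_t^n\|_{\mathcal L}\le C\kappa^n$) lets a small circle about $1$ separate $\lambda(t)$ from the rest of the spectrum. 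Three small points to tighten, none of them gaps: justify the exchange of $\mathbb E$ and the $dn$-integral on $\{M>A\}$ by truncating at a level $R$ and letting $R\to\infty$, since $\mathbb E[M^+]=\infty$ for $\alpha<1$ makes that integral only improperly convergent (your $[B,\infty)$ estimate already shows it converges); replace $\bar\varepsilon$ by $n\mapsto\sup_{m\ge n}\bar\varepsilon(m)$ so the bound $\bar\varepsilon(n)\le\bar\varepsilon(A)$ on $[A,B]$ is licit, and treat $\bar\varepsilon(A)=0$ trivially; and note that Condition~\ref{cond::when_modulus_large} is stated along integer thresholds whereas you (like the paper) use real thresholds $s\to+\infty$, which is harmless since the regularly varying tail gives $\mathbb P[\lfloor s\rfloor<\log\|A_1\|\le s]=o(s^{-\alpha})$.
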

\begin{proof}
	(1)  We first prove ~\eqref{P-t--P-f-C-t-alpha}. 
	Let $f \in \mathcal{L}$.
	%		 with $\min_{y \in \mathbb{S}_+^{d-1}}f(y) > 0$. 
	By Lemma~\ref{lem::bound},  we know that 
	\begin{equation}
		\sup_{x \in \mathbb{S}_+^{d-1}, g \in \mathrm{supp}(\mu)}|\log\|g\| - \sigma(g, x)| < +\infty,
	\end{equation}
	so that  uniformly for $x \in \mathbb{S}_+^{d-1}$, 
	\begin{equation}\label{P-t-P}
		(P_t-P)f(x)=\int_{\mathbb{R}} (e^{it\log \|g\|}-1)f(g\cdot x)d\mu(g) + O(|t|), \quad \mbox{ as } t \to 0,
	\end{equation}
	where  $O(|t|)$ means a real number (depending on $x$ and $t$)  bounded by $C |t|$,  for some  constant $C$ independent  of $x$.  
	
	Now,  for $x \in \mathbb{S}_+^{d-1}$, we have,  with  $G_1(s) = e^{its}-1$ and  $G_2(s) = \int f(g\cdot x) \mathbbm{1}_{\{ \log \|g\| \leq s \}} d\mu(g)$, 
	\begin{align}
		& \int_{\mathbb{R}} (e^{it\log \|g\|}-1)f(g\cdot x)d\mu(g) = \int_\mathbb{R} G_1(s) dG_2(s).
	\end{align}
	We come to estimate the integral  $ \int_0^\infty G_1(s) dG_2(s)$.
	Using integration by parts, we get
	\begin{equation}\label{int-0-infty-G-1-d-G-2}
		\int_0^{+\infty} G_1(s)dG_2(s) = \int_{0}^{+\infty} G_1(s)d(G_2(s)-G_2(+\infty)) = \int_{0}^{+\infty} (G_2(+\infty) - G_2(s))ite^{its}ds,
	\end{equation}
	where $G_2(+\infty)=\int f(g\cdot x) d\mu(g)$. Note that
	\begin{equation}\label{G-2}
		G_2(+\infty)-G_2(s) = \mathbb{E}[f(A_1 \cdot x) | \log \| A_1 \| > s] \cdot \mathbb{P}[\log \|A_1\| > s].
	\end{equation}
	Let $D$ be the set of nonnegative matrices $g$ with operator norm one that satisfy the Furstenberg-Kesten condition ~\eqref{0-leq-max},  
	%in Condition~\ref{cond::Furstenberg-Kesten} 
	and $D' = \{ g\cdot y : g \in D, y \in \mathbb{S}_+^{d-1} \}$. 
	Note that there exists a constant $C_1 > 0$ such that $\mathbf{d}(y_1, y_2) \leq C_1 |y_1 - y_2| $ for $y_1, y_2 \in D'$ (we can show this by using definition of $\mathbf{d}$ and noticing that entries of $y_i$ are bounded uniformly from zero). 
	%		{\color{blue}
		By \cite[Lemma 5.1] {mei2024support}, there exists $C_2 > 0$ such that $|gy| \geq C_2 \|g\| = C_2$ for all $g \in D$ and $y \in \mathbb{S}_+^{d-1}$. Therefore, 
		$$\mathbf{d}(g_1\cdot y, g_2 \cdot y) \leq C_1 \bigg|\frac{g_1y}{|g_1y|} - \frac{g_2y}{|g_2y|}\bigg| \leq 2C_1\frac{|(g_2 - g_1)y|}{|g_1y|} \leq  \frac{2C_1}{C_2} \|g_2 - g_1\|,  \quad   \forall g_1, g_2 \in D, y \in \mathbb{S}_+^{d-1}.$$
		%		}
	Using this inequality and Condition~\ref{cond::when_modulus_large},  we have $\mathbb{E}[f(A_1 \cdot x) | \log \| A_1 \| > s] \to Qf(x)$ as $s \to +\infty$, uniformly in $x$. 
	%		By Lemma~\ref{lem::bound}, there exists a constant $C > 0$ such that $|\log\|A_1\| - \sigma(A_1, y)| \leq C$ for any $y \in \mathbb{S}_+^{d-1}$. 
	Since $1 - F(s) = cps^{-\alpha} (1 + o(1))$, %  as $s \to +\infty$, 
	from the first inequality of~\eqref{sigma-A-1-x} we deduce that $\mathbb{P}[\log \|A_1\| > s] = cps^{-\alpha}(1+o(1))$. %  as $s \to +\infty$. 
	Thus from~\eqref{G-2}, we get that, uniformly in $x \in \mathbb{S}_+^{d-1}$, 
	\begin{equation}
		\lim_{s \to +\infty} s^{\alpha} (G_2(+\infty) - G_2(s)) = cp Qf(x).
	\end{equation}
	Write $h(s) = s^\alpha(G_2(+\infty) - G_2(s))$. For $t > 0$,
	\begin{equation}
		\int_0^{+\infty} (G_2(+\infty) - G_2(s))ite^{its}ds = \int_0^{+\infty} ith(s)e^{its}s^{-\alpha}ds = it^\alpha \int_0^{+\infty} \frac{h(\frac{s}{t})}{s^\alpha}e^{is}ds. 
	\end{equation}
	Since $G_2(+\infty)-G_2(s)$ is decreasing, from 
	%		{\color{blue}
		\cite[Page 86, Lemma 2.6.1]{Ibragimov-Linnik-1971},
		%		}
	this implies  that for $t  > 0$, 
	\begin{align}
		\int_0^{+\infty} (G_2(+\infty) - G_2(s))ite^{its}ds & = it^{\alpha}\Big(h\big(\frac{1}{s}\big) + o(1)\Big)\int_0^{+\infty}\frac{e^{is}}{s^\alpha}ds \notag \\
		&  = it^\alpha (cp\,Qf(x) + o(1)) (\alpha d_{\alpha + 1} + id_\alpha), 
	\end{align}
	where we use the fact that $\int_0^{+\infty} s^{-\alpha} \cos s \, ds = \alpha \int_0^{+\infty} s^{-\alpha-1}\sin s \, ds = \alpha d_{\alpha + 1}$ (by integration by parts). Similarly, we can estimate~\eqref{int-0-infty-G-1-d-G-2} for the case $t < 0$. The same argument applies for estimating $\int_{-\infty}^{0} G_1(s)dG_2(s)$ for $t \in \mathbb R$. This leads to 
	\begin{align} \label{o-inf-t-a}
		(P_t - P)f = C(t, \alpha)|t|^\alpha Qf + \varepsilon_t  |t|^\alpha + C_t |t|  \quad  \mbox{ as } t\to 0, 
	\end{align} 
	where  $ \varepsilon_t  $ and   $C_t$  are functions   on $\mathbb S_+^{d-1}$ 
	satisfying   $ \lim_{t \to 0} \| \varepsilon_t \|_\infty =0 $ and  $ \sup_{ 0 < |t | \leq t_0} \|  C_t \|_\infty < \infty $ for some $ t_0 >0 $ small enough.   
	% (under the norm $\|\cdot \|_\infty$). 
	Since $\alpha-2<\rho<-\alpha$, we have $\alpha < 1$, hence $|t| = o(|t|^\alpha)$. It follows that~\eqref{P-t--P-f-C-t-alpha} holds. 
	
	(2)  We then prove  \eqref{Pi-t-Pi-f-C-t-alpha-t-alpha}. 
	Let $m \geq 1$. Recall that from Lemma~\ref{lem::bound}, it holds that $\|P_t-P\|_\mathcal{L} = O(|t|^\alpha)$  as $t \to 0$. Since $\phi_Z(t) = \nu(P_t\mathbf{1})$, we deduce from~\eqref{P-t--P-f-C-t-alpha} (with $f = \mathbf{1}$) 
	%		of  Lemma  \ref{lem-311}
	and Part 3 of Proposition~\ref{prop::useful} that 
	\begin{equation}\label{1-lambda-t-C-t-alpha}
		1 - \lambda(t) = -C(t, \alpha)|t|^\alpha + o(|t|^\alpha).
	\end{equation}
	
	We come to expand $P_t^mf - \lambda(t)^mP_0f$ in two ways. On the one hand, using \eqref{R-t-n-R-0-n}, \eqref{P-t-P-L-O} and~\eqref{1-lambda-t-C-t-alpha}, we notice that as $t \to 0$,
	\begin{align}\label{p-t-m-f}
		P_t^mf - \lambda(t)^mP_0f & = \lambda(t)^m(\Pi_t-\Pi)f+\big((R_t^m-R_0^m)f + (1 - \lambda(t)^m)R_0^mf\big), \notag \\
		& = (\Pi_t - \Pi)f + (\lambda(t)^m-1)(\Pi_t - \Pi)f 
		+  C^{(1)}_{m, t} m\kappa^m 
		%  O_\infty (|t|^\alpha) 
		\notag \\
		& = (\Pi_t - \Pi)f +  C^{(2)}_{m, t} m  |t|^{2\alpha} 
		+   C^{(1)}_{m, t}    m\kappa^m  |t|^\alpha,
	\end{align}
	where for $i=1,2$, 
	% $O_\infty (|t|^{2\alpha})$ and $O_\infty(|t|^{\alpha})$ stand for two 
	$C^{(i)}_{m, t}$  are functions  on $\mathbb{S}_+^{d-1}$
	satisfying  $ \sup_{m \geq 1} \sup_{ 0<| t | \leq t_i } \| C^{(i)}_{m, t} \|_\infty  < \infty$, for some $t_i >0$  small enough. 
	%		\leq C|t|^{2\alpha}$, $\|O_\infty(|t|^{\alpha})\|_\infty \leq C|t|^{\alpha}$ for $t \in \mathbb{R}$ close enough to $0$, with constant $C$ independent of $m$ and $t$. 
	%		which implies that
	%		 $$\|(P_t^mf - \lambda(t)^mP_0f) - (\Pi_t - \Pi)f\|_\infty \leq C(m|t|^{2\alpha} + m\kappa^m |t|^\alpha),   \quad \forall t \in I, m \geq 1.$$ 
	On the other hand, using~\eqref{P-t--P-f-C-t-alpha} and \eqref{1-lambda-t-C-t-alpha}, we have
	%  (we keep  the notation  $o_\infty(|t|^a)$  introduced after  \eqref{o-inf-t-a}) 
	\begin{align}\label{ptmf}
		P_t^mf - \lambda(t)^mP_0f & = (P_t^m-P_0^m) f + (1-\lambda(t)^m)P_0^mf \notag \\
		& =  \sum_{i=0}^{m-1} P_t^{m-1-i}(P_t-P_0)P_0^{i} f + (1-\lambda(t)^m)P_0^mf.\notag \\
		& = C(t, \alpha)|t|^\alpha \sum_{i=0}^{m-1}P_t^{m-1-i}QP_0^{i} f- mC(t, \alpha)|t|^\alpha P_0^mf +  \varepsilon_{m, t} m |t|^\alpha. \notag \\
		& = C(t, \alpha)|t|^\alpha \sum_{i=0}^{m-1}P^{m-1-i}(Q-P)P^{i}f + 
		\varepsilon_{m, t} m  |t|^\alpha,
	\end{align}
	where $  \varepsilon_{m, t}  $ are functions on $\mathbb{S}_+^{d-1}$ satisfying  $\lim_{t \to 0}  \sup_{m \geq 1}  \| \varepsilon_{m, t}  \|_\infty  = 0$.
	Comparing the above two expansions of $P_t^mf - \lambda(t)^mP_0f$, we get that for some $C > 0$, as $t\to 0$,  % we see that 
	\begin{equation}\label{Pi-t-Pi-f-C-t-alpha}
		\bigg\|(\Pi_t - \Pi)f - C(t, \alpha)|t|^\alpha \sum_{i=0}^{m-1}P^{m-1-i}(Q-P)P^{i}\bigg\|_\infty \leq C(m\kappa^m|t|^\alpha + m|t|^{2\alpha}) + m \,o(|t|^\alpha).
	\end{equation} 
	% Since $m \geq 1$ is arbitrary, we take 
	Using~\eqref{U}, \eqref{U-22} and the property that $\|R_0^n\|_\mathcal{L} = O(\kappa^n) $ as $n \to \infty$ (see Proposition~\ref{prop::useful}), we know  that there exists $C' > 0$ such that 
	\begin{equation}\label{sum-i-0-m-1-P-m-1-i}
		\bigg\|\sum_{i=0}^{m-1}P^{m-1-i}(Q-P)P^{i} - \Delta \bigg\|_\mathcal{L} \leq C'm\kappa^m, \quad \forall m \geq 1. 
	\end{equation}
	It follows from~\eqref{Pi-t-Pi-f-C-t-alpha} and~\eqref{sum-i-0-m-1-P-m-1-i} that 
	\begin{equation}
		\| (\Pi_t - \Pi)f - C(t, \alpha)|t|^\alpha \Delta f\|_\infty \leq (C + C')(m\kappa^m|t|^\alpha + m|t|^{2\alpha}) + m\, o(|t|^\alpha). 
	\end{equation} 
	Passing to the limit as $t \to 0$ and then as $m \to \infty$, we get ~\eqref{Pi-t-Pi-f-C-t-alpha-t-alpha}. 
\end{proof}

The third lemma is a technical result that improves the estimation~\eqref{lambda-t-a-n-n-phi-Z} in Lemma~\ref{lem::decay_1}.
\begin{lemma}\label{lem::bootstrap}
	Assume Conditions~\ref{cond::Furstenberg-Kesten}, \ref{cond::stable_1}, \ref{cond::when_modulus_large}, and $\rho < -\alpha$. Then, for any $\varepsilon >0$, there exist positive numbers $N_0, \tau, C$ such that for all $n \geq N_0$ and $ t \in [-\tau a_n, \tau a_n]$,
	\begin{equation}\label{lambda-t-a-n-n-phi-Z-n-t-a-n-leq-C}
		\Big|\lambda\big(\frac{t}{a_n}\big)^n - \phi_Z\big(\frac{t}{a_n}\big)^n\Big| \leq \varepsilon
		%				\Big|\phi_Z\big(\frac{t}{a_n}\big)\Big|^n
		K(t)
		|t|^{2\alpha}n^{-1},
	\end{equation}
	where $K(t) := \exp(-C |t|^{\alpha}\min(|t|^{\frac{\alpha}{2}}, |t|^{-\frac{\alpha}{2}}))$. 
\end{lemma}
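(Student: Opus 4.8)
The plan is to reduce the whole statement to the single refined comparison
\[
\lambda(s)-\phi_Z(s)=o\big(|s|^{2\alpha}\big)\qquad\text{as }s\to0,
\]
which sharpens the bound $\lambda(s)-\phi_Z(s)=O(\|P_s-P\|_{\mathcal L}^2)=O(|s|^{2\alpha})$ coming from Part~3 of Proposition~\ref{prop::useful} together with \eqref{P-t-P-L-O}. Granting this refined estimate, the lemma follows at once: by \eqref{a-n-b-n} with $a=\lambda(\tfrac{t}{a_n})$, $b=\phi_Z(\tfrac{t}{a_n})$ (both nonzero for the relevant $t$),
\[
\Big|\lambda\big(\tfrac{t}{a_n}\big)^n-\phi_Z\big(\tfrac{t}{a_n}\big)^n\Big|\le\max\Big(\big|\lambda\big(\tfrac{t}{a_n}\big)\big|^{n-1},\,\big|\phi_Z\big(\tfrac{t}{a_n}\big)\big|^{n-1}\Big)\,n\,\Big|\lambda\big(\tfrac{t}{a_n}\big)-\phi_Z\big(\tfrac{t}{a_n}\big)\Big|.
\]
Choosing $\tau$ so small that $[-\tau,\tau]\subset I$ and $\inf_{|s|\le\tau}|\phi_Z(s)|\ge\tfrac12$, the two $(n-1)$-th powers are at most $\max(\kappa^{-1},2)\,K(t)$, by \eqref{lambda-t-a-n-n-K-t} (applied with the free parameter $\varepsilon$ of Lemma~\ref{lem::decay_1} set to $\alpha/2$, so that its $K$ is the one in the present statement) and the lower bound $|\lambda(s)|>\kappa$ on $I$. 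Since $\rho<0$ we have $a_n=n^{1/\alpha}$ by \eqref{a_n}, so $|t|\le\tau a_n$ forces $|t/a_n|\le\tau$ and $(|t|/a_n)^{2\alpha}=|t|^{2\alpha}n^{-2}$; the refined estimate then gives $n\,|\lambda(\tfrac{t}{a_n})-\phi_Z(\tfrac{t}{a_n})|=|t|^{2\alpha}n^{-1}\,o(1)$ uniformly in such $t$, and shrinking $\tau$ further makes this $o(1)$ factor smaller than $\varepsilon/\max(\kappa^{-1},2)$. This yields \eqref{lambda-t-a-n-n-phi-Z-n-t-a-n-leq-C} with $N_0\ge N$ large enough.

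To prove the refined estimate I would use the eigen‑projection identities $\phi_Z(s)=\nu(P_s\mathbf 1)$ and, since $P_s\Pi_s=\lambda(s)\Pi_s$ with $\Pi_s$ of rank one, $\lambda(s)=\nu(P_s\Pi_s\mathbf 1)/\nu(\Pi_s\mathbf 1)$ for $|s|$ small (note $\nu(\Pi_s\mathbf 1)\to1$). Set $w_s:=(\Pi_s-\Pi)\mathbf 1$. The crucial point is $\Delta\mathbf 1=0$: indeed $R_0\mathbf 1=(P-\Pi)\mathbf 1=0$ and $Q\mathbf 1=\mathbf 1$, hence by \eqref{def-lambda-f} $\delta(\mathbf 1)=\nu((Q-P)\mathbf 1)=0$ and $\Delta\mathbf 1=\delta(\mathbf 1)\mathbf 1=0$. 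Therefore Lemma~\ref{lem-311} gives $\|w_s\|_\infty=\|(\Pi_s-\Pi)\mathbf 1-C(s,\alpha)|s|^\alpha\Delta\mathbf 1\|_\infty=o(|s|^\alpha)$. I also need the elementary sup‑norm bound $\|(P_s-P)g\|_\infty\le C|s|^\alpha\|g\|_\infty$ for every bounded $g$, which follows from $|\sigma(g,x)-\log\|g\||\le C$ (Lemma~\ref{lem::bound}) and the estimate $\epsilon(s)=O(|s|^\alpha)$ already established in the proof of Lemma~\ref{lem::bound}; in particular $|\nu((P_s-P)w_s)|\le\|(P_s-P)w_s\|_\infty\le C|s|^\alpha\|w_s\|_\infty=o(|s|^{2\alpha})$.

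Putting these together, and using $\nu(Pw_s)=\nu(w_s)$ from the $P$‑invariance of $\nu$ in \eqref{nu-f-int-S-d-1-f-y},
\[
\nu(P_s\Pi_s\mathbf 1)=\nu(P_s\mathbf 1)+\nu(P_sw_s)=\phi_Z(s)+\nu(w_s)+\nu((P_s-P)w_s)=\phi_Z(s)+\nu(w_s)+o(|s|^{2\alpha}),
\]
while $\nu(\Pi_s\mathbf 1)=1+\nu(w_s)=1+o(|s|^\alpha)$. Dividing and expanding $(1+\nu(w_s))^{-1}=1-\nu(w_s)+o(|s|^{2\alpha})$ (as $\nu(w_s)^2=o(|s|^{2\alpha})$) gives $\lambda(s)=\phi_Z(s)+\nu(w_s)(1-\phi_Z(s))+o(|s|^{2\alpha})$; since $1-\phi_Z(s)=-\nu((P_s-P)\mathbf 1)=O(|s|^\alpha)$ and $\nu(w_s)=o(|s|^\alpha)$, the middle term is $o(|s|^{2\alpha})$, which proves the refined estimate. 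The only genuinely delicate point is exactly this cancellation that upgrades $O(|s|^{2\alpha})$ to $o(|s|^{2\alpha})$: it relies on the algebraic identities $R_0\mathbf 1=0$ and $\Delta\mathbf 1=0$ (equivalently $\delta(\mathbf 1)=0$), which annihilate the leading second‑order contribution, and on carrying every error term through in the sup‑norm — where the second‑order information of Lemma~\ref{lem-311} is available — rather than in the $\mathcal L$‑norm, where only the cruder $O(|s|^\alpha)$ control of $\Pi_s-\Pi$ is at hand. Everything else is routine bookkeeping.
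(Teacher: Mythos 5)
Your proposal is correct and takes essentially the same route as the paper: the heart of both arguments is the refined estimate $\lambda(s)-\phi_Z(s)=o(|s|^{2\alpha})$, obtained via the normalized eigenfunction $\Pi_s\mathbf 1/\nu(\Pi_s\mathbf 1)$, the identity $\Delta\mathbf 1=0$ combined with \eqref{Pi-t-Pi-f-C-t-alpha-t-alpha} of Lemma~\ref{lem-311} (giving $\|(\Pi_s-\Pi)\mathbf 1\|_\infty=o(|s|^\alpha)$), and the sup-norm bound $\|(P_s-P)g\|_\infty=O(|s|^\alpha)\|g\|_\infty$ coming from \eqref{P-t-P-L-O}. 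The conclusion is then drawn exactly as in the paper, from \eqref{a-n-b-n} together with \eqref{lambda-t-a-n-n-K-t} of Lemma~\ref{lem::decay_1} and $a_n=n^{1/\alpha}$; your bookkeeping of the quotient $(1+\nu(w_s))^{-1}$ is only a more explicit version of the paper's shortcut $\nu(P(v_s-\mathbf 1))=\nu(v_s-\mathbf 1)=0$.
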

\begin{proof}
	%		We first come to prove that $\|(\Pi_{\frac{t}{a_n}} - \Pi)\mathbf{1}\|_\mathcal{L} = |t|^\alpha o(n^{-1})$. By~\eqref{P-t--P-f-C-t-alpha}, we have $\|(\Pi_{\frac{t}{a_n}} - \Pi)\mathbf{1}\|_\infty = |t|^\alpha o(n^{-1})$. It suffices to estimate $m(\Pi_{\frac{t}{a_n}}\mathbf{1} -\mathbf{1}) = m(\Pi_{\frac{t}{a_n}}\mathbf{1})$. Let $y_1, y_2 \in \mathbb{S}_+^{d-1}$ with $y_1 \neq y_2$ and 
	%		\begin{equation}
		%			I = \int \frac{e^{i\frac{t}{a_n}\sigma(g, y_1)} - e^{i\frac{t}{a_n}\sigma(g, y_2)}}{\mathbf{d}(y_1, y_2)}d\mu^{k(n)}(g).
		%		\end{equation}
	%		Since $|\sigma(g, y_1)-\sigma(g, y_2)| \leq 2|\log(1-\mathbf{d}(y_1, y_2))|$ for any nonnegative $g$ with at least one positive entry in each row and column (\cite[Lemma 3.1]{hennion2008stable}) and $|\sigma(g, y_1)-\sigma(g, y_2)| \leq C$ by Lemma~\ref{lem::bound}, we have $I = |t|O(\frac{1}{a_n})$ as $n \to \infty$ for $t \in [-M, M]$. This implies that $m(P_{\frac{t}{a_n}}^{k(n)}) = |t|O(\frac{1}{a_n})$. Denote $k(n) = (\log n)^2$. Notice that for $t \in [-M, M]$, 
	%		\begin{equation}\label{lambda-frac-t-a-n-k-n}
		%			\lambda(\frac{t}{a_n})^{k(n)}\Pi_{\frac{t}{a_n}}\mathbf{1}(y) + (R_{\frac{t}{a_n}}^{k(n)} - R_{0}^{k(n)})\mathbf{1}(y) = P^{k(n)}_{\frac{t}{a_n}}\mathbf{1}(y) = \phi_Z^{k(n)}(\frac{t}{a_n}) + |t|O(k(n)n^{-\frac{1}{\alpha}}). 
		%		\end{equation}
	%		 By~\eqref{lambda-frac-t-a-n-k-n}, we see that $m(\Pi_{\frac{t}{a_n}}\mathbf{1}) = O(m(P_{\frac{t}{a_n}}^{k(n)})) + |t|^\alpha O(n^{-1}\kappa^{k(n)}) = (|t| + |t|^\alpha)O(n^{-\frac{1}{\alpha}})$. 
	
	For $t \in I$, let $v_t = \frac{\Pi_t\mathbf{1}}{\nu(\Pi_t\mathbf{1})}$, 
	which is an eigenfunction of $P_t$: $P_tv_t=\lambda(t)v_t$. Note that $\Pi_t^2 = \Pi_t$ and
	\begin{equation}\label{v-t-1-pi-t-2}
		v_t - \mathbf{1} = \frac{(\Pi_t^2 -\Pi\Pi_t)\mathbf{1}}{\nu(\Pi_t\mathbf{1})} = \frac{(\Pi_t - \Pi)^2\mathbf{1} + (\Pi_t - \Pi)\mathbf{1}}{\nu(\Pi_t\mathbf{1})}.
	\end{equation}
	Since $\lambda(t) = \nu(P_t v_t)$, $\phi_Z(t) = \nu(P_t\mathbf{1})$ and $(\nu P) (v_t-\mathbf{1}) = \nu(v_t-\mathbf{1}) = 0$,  from \eqref{v-t-1-pi-t-2}
	we get that as $t \to 0$,
	\begin{equation}\label{lambda-t-a-n-phi-Z-t-a-n}
		\lambda(t) - \phi_Z(t) = \nu(P_{t}(v_t - \mathbf{1})) = \nu((P_{t}-P)(v_t - \mathbf{1})) = O(\|(P_{t} - P)(\Pi_{t} - \Pi)\mathbf{1}\|_\infty).
	\end{equation}
	From~\eqref{P-t-P-L-O}, \eqref{v-t-1-pi-t-2} and \eqref{lambda-t-a-n-phi-Z-t-a-n}, we deduce  that as $t \to 0$, 
	\begin{equation}
		\lambda(t) - \phi_Z(t)  % = O(\|(P_{t} - P)(\Pi_{t} - \Pi)\mathbf{1}\|_\infty) 
		= |t|^\alpha O(\|(\Pi_{t} - \Pi)\mathbf{1}\|_\infty).
	\end{equation} 
	Since  $\Delta \mathbf{1}=0$, we get from~\eqref{P-t--P-f-C-t-alpha} that $\|(\Pi_{t} - \Pi)\mathbf{1}\|_\infty = o(|t|^\alpha )$, hence as $t \to 0$,
	\begin{equation}\label{}
		\lambda(t) - \phi_Z(t) = o(|t|^{2\alpha}).
	\end{equation}
	This implies that for any $\varepsilon > 0$, there exists $\tau > 0$ such that 
	\begin{equation}
		\Big|\lambda\big(\frac{t}{a_n}\big) - \phi_Z\big(\frac{t}{a_n}\big)\Big| \leq  \varepsilon|t|^{2\alpha}n^{-2}, \quad \forall t \in [-\tau a_n, \tau a_n]
	\end{equation}
	(notice that $a_n = n^{\frac{1}{\alpha}}$ since $\rho < -\alpha < 0$). It follows from \eqref{lambda-t-a-n-n-K-t} and \eqref{lambda-t-a-22} that~\eqref{lambda-t-a-n-n-phi-Z-n-t-a-n-leq-C} holds. 
\end{proof}

Now we come to finish the proof of Theorem~\ref{thm::main}.
\begin{proof}[Proof of Theorem~\ref{thm::main}, case $\rho < -\alpha$]
	Let $f \in \mathcal{L}$, and assume that $f$ is real-valued and $\min_{y \in \mathbb{S}_+^{d-1}}f(y) > 0$ 
	without loss of generality. 
	% 	Set $\eta = \frac{1}{4\alpha}$. As in Part 2 of Lemma \ref{lem:esseen-estimate}, we write $m_n = n^{\frac{1}{2\alpha} - \eta}$. 
	%	{\color{blue}
		Note that $\log h_\alpha(t) = C(t, \alpha)|t|^\alpha$ for $t \in \mathbb{R}$, and recall the functions $M$ and $N$ defined in \eqref{def-MsNs}:
		\begin{align}
			M(s) & = \frac{1}{2\pi} \int_\mathbb{R} \frac{e^{-its}}{it}J(t)h_\alpha(t)dt,
			\quad \\
			N(s)& =\frac{1}{2\pi}\int_\mathbb{R}\frac{e^{-its}}{-it}(\log h_\alpha(t)) h_\alpha(t)dt = \frac{1}{2\pi}\int_\mathbb{R}\frac{e^{-its}}{-it}C(t, \alpha)|t|^\alpha h_\alpha(t)dt. 
		\end{align}
		%It can be checked that  these functions are of bounded variation with 
		%	By the Riemann-Lebesgue lemma, 
		%	$\lim_{s \to \pm \infty}M(s) = \lim_{s \to \pm \infty}N(s) = 0$. 
		%	We notice that $M$ and $N$ are differentiable on $\mathbb{R}$, with 
		%	\begin{equation}
			%		M'(s) = - \frac{1}{2\pi} \int_\mathbb{R} e^{-its}J(t)h_\alpha(t)dt, 
			%		\quad N'(s)=\frac{1}{2\pi}\int_\mathbb{R} e^{-its} C(t, \alpha)|t|^\alpha h_\alpha(t)dt. 
			%	\end{equation}
		%}
	
	To prove \eqref{lim-n-bigg-mathbb-E}, we need to show that as $n \to \infty$, 
	\begin{equation}
		\sup_{s \in \mathbb{R}, x \in \mathbb{S}_+^{d-1}}n\bigg|\mathbb{E}[f(X_n^x) \mathbbm{1}_{\{\frac{S_n^x}{a_n}-b_n \leq s\}}] - \nu(f) H_\alpha(s) - n^{-1}\nu(f)M(s)-n^{-1}\Delta f(x)N(s)\bigg| = 0.
	\end{equation}
	Since  $ \sup_{x \in \mathbb{S}_+^{d-1}}  | \mathbb{E}[f(X_n^x)] - \nu (f)  | 
	%   = (\Pi + R_0^n)f(x)$ and 
	=  \sup_{x \in \mathbb{S}_+^{d-1}} | (\Pi + R_0^n)f(x) - \nu(f) |  = \|R_0^n\|_\infty \to 0$ exponentially fast as $n \to \infty$, it suffices to show 
	\begin{equation}\label{lim-s-x-E-f}
		 \sup_{s \in \mathbb{R}, x \in \mathbb{S}_+^{d-1}} n\bigg|\mathbb{E}[f(X_n^x) \mathbbm{1}_{\{\frac{S_n^x}{a_n}-b_n \leq s\}}] - \mathbb{E}[f(X_n^x)]H_\alpha(s) - n^{-1}\nu(f)M(s)-n^{-1}\Delta f(x)N(s)\bigg| \to 0. 
	\end{equation}
	%		and 
	%	 $N(s)=\frac{1}{2\pi}\int_\mathbb{R}\frac{e^{-its}}{-it}C(t, \alpha)|t|^\alpha h_\alpha(t)dt$ 
	%	since $\log h_\alpha(t) = C(t, \alpha)|t|^\alpha$ for $t \in \mathbb{R}$. 
	%	
	%	We know that $\lim_{s \to \pm \infty}M(s) = \lim_{s \to \pm \infty}N(s) = 0$ by the Riemann-Lebesgue lemma, and that 
	%	By the Fourier inversion theorem, we have  
	%	$\int_\mathbb{R} e^{its}dM(s) = -J(t)h_\alpha(t)$, $\int_\mathbb{R} e^{its}dN(s) = C(t, \alpha)|t|^\alpha$, $t \in \mathbb{R}$.  
	
	%	{\color{blue}
		By Lemma~\ref{lem::decay_1}, there exist positive numbers $\tau, N, C$ such that \eqref{lambda-t-a-n-n-K-t} hold for all $t \in [-\tau a_n, \tau a_n]$ and $n \geq N$, with $K(t) = \exp(-C |t|^{\alpha}\min(|t|^{\frac{\alpha}{2}}, |t|^{-\frac{\alpha}{2}}))$. Set $T={\tau}{a_n}={\tau}{n^{\frac{1}{\alpha}}}$. 
		%	}
	%	Using Esseen's smoothing inequality for the increasing function $s \mapsto \mathbb{E}[f(X_n^x)\mathbbm{1}_{\{\frac{S_n^x}{a_n} - b_n \leq s\}}]$ and the function of bounded variation  $\mathbb{E}[f(X_n^x)]H_\alpha + n^{-1}(\nu(f)M + \Delta f(x)N)$,
	Using Lemma~\ref{lem::Esseen} with $F_1(s) = \mathbb{E}[f(X_n^x)\mathbbm{1}_{\{\frac{S_n^x}{a_n}-b_n \leq s\}}]$, $F_2(s) = \mathbb{E}[f(X_n^x)]H_\alpha(s)$ and $G(s) = n^{-1}(\nu(f)M(s) + \Delta f(x)N(s))$, we get,
	for any $x \in \mathbb{S}_+^{d-1}$ and $n \geq 1$, with $C_n = \mathbb{E}[f(X_n^x)] \|H_\alpha'\|_\infty + n^{-1}(\nu(f)\|M'\|_\infty + |\Delta f(x)|\|N'\|_\infty)$, 
	\begin{align}\label{l-n-1-sup-s-in-R-E-2}
		&  \sup_{s \in \mathbb{R}}n\bigg|\mathbb{E}[f(X_n^x) \mathbbm{1}_{\{\frac{S_n^x}{a_n}-b_n \leq s\}}] - \mathbb{E}[f(X_n^x)]H_\alpha(s) - n^{-1}\nu(f)M(s)-n^{-1}\Delta f(x)N(s)\bigg| \notag \\
		& \leq \frac{24 C_n}{\pi n^{-1}T} \notag \\
		& + \int_{-T}^T \bigg| \frac{e^{-itb_n}P_{\frac{t}{a_n}}^nf(x) - h_\alpha(t)P_0^nf(x) + n^{-1} \nu(f)h_\alpha(t)J(t) - n^{-1}C(t, \alpha)|t|^\alpha h_\alpha(t) \Delta f(x)}{\pi n^{-1}t} \bigg|dt.
	\end{align}
	From \eqref{R-t-n-R-0-n}, we have, for $t \in [-T, T]$ and $x \in \mathbb{S}^{d-1}_+$, 
	\begin{align}
		& e^{-itb_n}P_{\frac{t}{a_n}}^nf(x) - h_\alpha(t)P_0^nf(x) + n^{-1} \nu(f)h_\alpha(t)J(t) - n^{-1}C(t, \alpha)|t|^\alpha h_\alpha(t) \Delta f(x) \notag \\
		= & e^{-itb_n}\lambda\big(\frac{t}{a_n}\big)^n\big(\Pi_{\frac{t}{a_n}}f-\nu(f)-n^{-1}C(t, \alpha)|t|^\alpha \Delta f(x)\big) + (e^{-itb_n}R_{\frac{t}{a_n}}-h_\alpha(t)R_0^n)f(x) \notag \\
		& \quad + \nu(f) e^{-itb_n} \Big(\lambda\big(\frac{t}{a_n}\big)^n - \phi_Z\big(\frac{t}{a_n}\big)^n\Big) + \nu(f)\Big(e^{-itb_n}\phi_Z(\frac{t}{a_n})^n-h_\alpha(t)+n^{-1}h_\alpha(t)J(t)\Big) \notag \\
		&\quad  +e^{-itb_n}\Big(\lambda\big(\frac{t}{a_n}\big)^n - \phi_Z\big(\frac{t}{a_n}\big)^n\Big)n^{-1}C(t, \alpha)|t|^\alpha \Delta f(x) \notag \\
		& \quad + \Big(e^{-itb_n}\phi_Z(\frac{t}{a_n})^n - h_\alpha(t)\Big)n^{-1}C(t, \alpha)|t|^\alpha \Delta f(x). 
	\end{align}
	Plugging this into~\eqref{l-n-1-sup-s-in-R-E-2}, we get 
	\begin{align}
		 & \sup_{s \in \mathbb{R}}n\bigg|\mathbb{E}[f(X_n^x) \mathbbm{1}_{\{\frac{S_n^x}{a_n}-b_n \leq s\}}] - \mathbb{E}[f(X_n^x)]H_\alpha(s) - n^{-1}\nu(f)M(s)-n^{-1}\Delta f(x)N(s)\bigg| \notag \\
		 & \leq \frac{24 C_n}{\pi n^{-1}T} + \frac{1}{\pi}\sum_{i=1}^6 I_i',
	\end{align}
	where
	\begin{align}
		I_1' & = \int_{-T}^T \big|\lambda(\frac{t}{a_n})\big|^n\bigg|\frac{\Pi_{\frac{t}{a_n}}f(x)-\nu(f)-n^{-1}C(t, \alpha)|t|^\alpha \Delta f(x)}{n^{-1}t}\bigg|dt, \notag \\
		I_2' & = \int_{-T}^T \bigg|\frac{(e^{-itb_n}R_{\frac{t}{a_n}}^n-h_\alpha(t)R_0^n)f(x)}{n^{-1}t}\bigg|dt,  \notag \\
		I_3' & = \nu(f)\int_{-T}^T \bigg|\frac{\lambda(\frac{t}{a_n})^n - \phi_Z(\frac{t}{a_n})^n}{n^{-1}t}\bigg|dt, \notag \\
		I_4' & = \nu(f) \int_{-T}^T \bigg|\frac{e^{-itb_n}\phi_Z(\frac{t}{a_n})^n-h_\alpha(t)+n^{-1}h_\alpha(t)J(t)}{n^{-1}t}\bigg|dt, \notag \\
		I_5' & = \int_{-T}^T \bigg|\frac{ (\lambda(\frac{t}{a_n})^n - \phi_Z(\frac{t}{a_n})^n)n^{-1}C(t, \alpha)|t|^\alpha \Delta f(x) }{n^{-1}t} \bigg|dt, \notag \\
		I_6' & = \int_{-T}^T \bigg| \frac{ (e^{-itb_n}\phi_Z(\frac{t}{a_n})^n - h_\alpha(t))n^{-1}C(t, \alpha)|t|^\alpha \Delta f(x) }{n^{-1} t} \bigg|dt. \notag
	\end{align}
	%	\begin{align}
		%		\leq &  \frac{24 C_n}{\pi n^{-1}T} + \frac{1}{\pi} \left( \int_{-T}^T |\lambda(\frac{t}{a_n})^n|\bigg|\frac{\Pi_{\frac{t}{a_n}}f(x)-\nu(f)-n^{-1}C(t, \alpha)|t|^\alpha \Delta f(x)}{n^{-1}t}\bigg|dt \right. \notag \\
		%		& \quad {} \quad {} \quad {} \quad {} \quad  + \int_{-T}^T \bigg|\frac{(e^{-itb_n}R_{\frac{t}{a_n}}^n-h_\alpha(t)R_0^n)f(x)}{n^{-1}t}\bigg|dt  + \nu(f)\int_{-T}^T \bigg|\frac{\lambda(\frac{t}{a_n})^n - \phi_Z(\frac{t}{a_n})^n}{n^{-1}t}\bigg|dt \notag \\
		%		& \quad {} \quad {} \quad {} \quad {} \quad + \int_{-T}^T \bigg|\frac{ (\lambda(\frac{t}{a_n})^n - \phi_Z(\frac{t}{a_n})^n)n^{-1}C(t, \alpha)|t|^\alpha \Delta f(x) }{n^{-1}t} \bigg|dt  \notag \\
		%		& \quad {} \quad {} \quad {} \quad {} \quad + \nu(f) \int_{-T}^T \bigg|\frac{e^{-itb_n}\phi_Z(\frac{t}{a_n})^n-h_\alpha(t)+n^{-1}h_\alpha(t)J(t)}{n^{-1}t}\bigg|dt \notag \\
		%		& \left. \quad {} \quad {} \quad {} \quad {} \quad + \int_{-T}^T \bigg| \frac{ (e^{-itb_n}\phi_Z(\frac{t}{a_n})^n - h_\alpha(t))n^{-1}C(t, \alpha)|t|^\alpha \Delta f(x) }{n^{-1} t} \bigg|dt \right),
		%	\end{align}
	%	where $I_2$ and $I_4$ are two integrals in~\eqref{l-n--1-sup-s-in-R} (we already proved $I_2 \to 0$ and $I_4 \to 0$), 
	%	and  is bounded uniformly in $n$. 
	Since $(C_n)$ is bounded and $n^{-1}T = \tau n^{-1+\frac{1}{\alpha}} \to \infty$ as $n \to \infty$, we see that in order to prove~\eqref{lim-s-x-E-f}, it suffices to prove that $I_1', \cdots, I_6'$ tend to $0$.
	%	We can show $\lim_{n \to \infty} I_2' = 0$ by arguing as in the proof of the case $\rho > -\alpha$. 
	
	%	{\color{blue}
		For any $\varepsilon > 0$, by \eqref{lambda-t-a-n-n-K-t} and \eqref{Pi-t-Pi-f-C-t-alpha-t-alpha}, there exist $\xi > 0$ and $C > 0$ such that for all $n \geq 1$, 
		\begin{equation}
			I_1' \leq \int_{|t| \leq \xi a_n}  K(t)\varepsilon dt + \int_{\xi a_n \leq |t| \leq T} K(t) \bigg|\frac{C}{n^{-1}t}\bigg|dt \leq \varepsilon \int_\mathbb{R} K(t) dt + Cn \int_{|t| \geq \xi a_n} \frac{K(t)}{|t|}dt. 
		\end{equation}
		Since $K(t)$ decays faster than any polynomial  of $|t|$ as $t \to \pm\infty$, we know that 
		\begin{equation}
			\limsup_{n \to \infty} I_1' \leq \varepsilon \int_{\mathbb{R}}K(t)dt.
		\end{equation}
		Taking $\epsilon \to 0+$, we get that $I_1' \to 0$ as $n \to \infty$. 
		
		We see that $I_2' \to 0$ by arguing as in the proof of the case $\rho > -\alpha$ (see~\eqref{I-2-C-n-l-n-1}). 
		
		For any $\varepsilon > 0$, by \eqref{lambda-t-a-n-n-K-t} and \eqref{lambda-t-a-n-n-phi-Z-n-t-a-n-leq-C}, there exist $\xi' > 0$  such that for $n \geq 1$, 
		\begin{align}\label{I-3-prime}
			I_3' &  \leq \nu(f) \int_{|t| \leq \xi' a_n}  \frac{\varepsilon K(t) |t|^{2\alpha} n^{-1}}{n^{-1}|t|} dt + \nu(f) \int_{\xi' a_n \leq |t| \leq T} \frac{2K(t)}{n^{-1}|t|}dt \notag \\
			& \leq \nu(f) \int_\mathbb{R} \varepsilon K(t) |t|^{2\alpha-1}dt + \nu(f) 2n \int_{|t| \geq \xi' a_n} \frac{K(t)}{|t|}dt.
		\end{align}
		Passing to the limit as $n \to \infty$ and then as $\varepsilon \to 0+$, we get that $I_3' \to 0$ as $n \to \infty$. 
		
		We then see that $I_4' \to 0$ by using \eqref{lim-n-t-leq-m-n} of Lemma~\ref{lem:esseen-estimate} and arguing as in the proof of the case $\rho > -\alpha$, $I_5' \to 0$ by arguing as in the proof for $I_3' \to 0$ above (see~\eqref{I-3-prime}), $I_6' \to 0$ by truncating the integral and then using~\eqref{lambda-t-a-n-n-K-t} together with convergence $e^{-itb_n}\phi_Z(\frac{t}{a_n})^n \to h_\alpha(t)$. Thus \eqref{lim-s-x-E-f} holds.  This ends the proof  of Theorem~\ref{thm::main} for the case $\rho < -\alpha$. 
		%	}

	%	We have $I_1' \to 0$ by~\eqref{lambda-t-a-n-n-K-t} and~\eqref{Pi-t-Pi-f-C-t-alpha-t-alpha}, $I_2' \to 0$ by arguing as in the proof of the case $\rho > -\alpha$ (see~\eqref{I-2-C-n-l-n-1}), $I_3' \to 0$ and $I_4' \to 0$ by truncating the integral and then using \eqref{lambda-t-a-n-n-K-t} and \eqref{lambda-t-a-n-n-phi-Z-n-t-a-n-leq-C}, 
	%%	$I_4' \to 0$ by \eqref{lambda-t-a-n-n-K-t} and \eqref{lambda-t-a-n-n-phi-Z-n-t-a-n-leq-C}, 
	%	$I_5' \to 0$ by using \eqref{lim-n-t-leq-m-n} and arguing as in the proof of the case $\rho > -\alpha$, $I_6' \to 0$ by truncating the integral and then using~\eqref{lambda-t-a-n-n-K-t} together with convergence $e^{-itb_n}\phi_Z(\frac{t}{a_n})^n \to h_\alpha(t)$. This gives~\eqref{lim-s-x-E-f} and finishes the proof for the case $\rho < -\alpha$. 
\end{proof}

\bmhead{Acknowledgements}
The work has been supported 
by  
% the National Natural Science Foundation of China (Grant No. 12271062),  
% with Li Y, to be aded later
the ANR project ``Rawabranch'' number ANR-23-CE40-0008,  the France 2030 framework program, Centre Henri Lebesgue ANR-11-LABX-0020-01, and the 
Tsinghua Scholarship for Overseas Graduate Studies.  
The work has benefited from Jianzhang Mei’s visit to the University of South Brittany. The hospitality of the university was greatly appreciated.
\bibliography{ref}
%% if required, the content of .bbl file can be included here once bbl is generated
%\input sn-article.bbl

\end{document}